 \newtheorem{thm}{Theorem}[section]
 \newtheorem{lem}[thm]{Lemma}
 \newtheorem{prop}[thm]{Proposition}
 \theoremstyle{definition}
 \newtheorem{defn}[thm]{Definition}
 \theoremstyle{remark}
 \newtheorem{rem}[thm]{Remark}
\mathchardef\semic="303B
\newcommand{\barint}{\mbox{$ave \int$}}
\newcommand{\RNum}[1]{\uppercase\expandafter{\romannumeral #1\relax}}
\def\barint_#1{\mathchoice
            {\mathop{\vrule width 6pt
height 3 pt depth -2.5pt
                    \kern -8.8pt
\intop}\nolimits_{#1}}%
            {\mathop{\vrule width 5pt height
3 pt depth -2.6pt
                    \kern -6.5pt
\intop}\nolimits_{#1}}%
            {\mathop{\vrule width 5pt height
3 pt depth -2.6pt
                    \kern -6pt
\intop}\nolimits_{#1}}%
            {\mathop{\vrule width 5pt height
3 pt depth -2.6pt
          \kern -6pt \intop}\nolimits_{#1}}}
\definecolor{gr}{rgb}   {0.,   0.8,   0. }
\definecolor{bl}{rgb}   {0.,   0.5,   1. }
\definecolor{mg}{rgb}   {0.7,  0.,    0.7}
\definecolor{brown}{rgb}{1,0.6,0}
\definecolor{chocolate(traditional)}{rgb}{0.48, 0.25, 0.0}
\definecolor{darkbrown}{rgb}{0.4, 0.26, 0.13}
\begin{document}

\title[Evolution of time-harmonic wave]
{Evolution of time-harmonic electromagnetic and acoustic waves along waveguides}
\author{Medet Nursultanov}
\address{Department of Mathematics, Chalmers University of Technology and The University of Gothenburg, Sweden}
\email{medet.nursultanov@gmail.com}

\author{Andreas Ros\'en$^*$}
\thanks{\textit{Mathematics Subject Classification (2010).} Primary 47A10, 47A60; Secondary 35Q61, 35J05}
\thanks{\textit{Keywords.} Helmholtz equation, Maxwell's equations, electromagnetic
waveguide, acoustic waveguide, functional calculus}
\thanks{$^*$Andreas Ros\'en was formerly named Andreas Axelsson.}
\address{Department of Mathematics, Chalmers University of Technology and The University of Gothenburg, Sweden}
\email{andreas.rosen@chalmers.se}
\maketitle

\begin{abstract}
We study time-harmonic electromagnetic and acoustic waveguides, modeled by an infinite cylinder with a non-smooth cross section. We introduce an infinitesimal generator for the wave evolution along the cylinder, and prove estimates of the functional calculi of these first order non-self adjoint differential operators with non-smooth coefficients. Applying our new functional calculus, we obtain a one-to-one correspondence between polynomially bounded time-harmonic waves and functions in appropriate spectral subspaces.
\end{abstract}


%
%
%

\section{Introduction}

A linear partial differential equation, PDE, or a system of PDEs, is often analyzed by studying the evolution of solutions $u$ with respect to one of the variables, say $t$. In this way the PDE becomes a vector-valued ordinary differential equation, ODE, like
\begin{equation}\label{Int.ode}
  \partial_t u(t,x) + T u(t,x)=0
\end{equation}
in the homogeneous case. We assume here that our PDE is of first order. If it is of a higher order, we first rewrite it as a system of first order equations. Here $T$, an infinitesimal generator, is a first order differential operator acting in the remaining variables $x$ only, for each fixed $t$.

Formally solutions to \eqref{Int.ode} are given by

\begin{equation}\label{Int.sol}
  u(t,x)= (\exp(-tT)u(0,\cdot))(x).
\end{equation}
However, as $T$ is an unbounded operator, we need to be careful in the
definition and analysis of such a solution operator $\exp(-tT)$.
The heuristics are as follows.
For a parabolic equation, say the heat equation, $T$ is the positive Laplace operator and $\exp(-tT)$ is a well defined bounded operator for any $t\ge 0$ and any initial function.
For a hyperbolic equation, say the wave equation as a first order system, $T$ is skew symmetric and $\exp(-tT)$ is unitary and well defined for any $-\infty<t<\infty$ and any initial function.
For an elliptic equation, say the Cauchy--Riemann system, $T$ is symmetric but with spectrum running from $-\infty$ to $+\infty$.
In this case we need to split the function space for initial data as a direct sum of two Hardy subspaces. Then $\exp(-tT)$ is well defined and bounded for $t>0$ when the initial data is in one of the Hardy subspaces, and for $t<0$ when initial data is in the other Hardy subspace.

The aim of the present paper is to study infinitesimal generators $T$ arising as above in the elliptic case. Our motivation comes from the theory for waveguides, and our results yield a powerful mathematical representation of time-harmonic waves propagating along waveguides with general non-smooth materials. The waveguide is modeled by the unbounded region $\mathrm{R}\times \Omega$, where $\Omega$ is a bounded domain in $\mathrm{R}^2$, or more generally in $\mathrm{R}^n$. Note that we study time-harmonic waves. Therefore the PDE is elliptic rather than hyperbolic, and $t$ is not time but rather the spatial variable along the waveguide. For an acoustic waveguide, the PDE is of Helmholtz type, as in Section \ref{Helmholtz subsection}, with coefficients which we allow to vary non-smoothly over the cross section $\Omega$, but they are homogeneous along the waveguide.
For an electromagnetic waveguide, the system of PDEs is Maxwell's equations as we describe in Section \ref{Maxwell subsection}.

We show in Section \ref{pde as ode} that the infinitesimal generators $T$ arising in this way when studying waveguide propagation are of the form
\begin{equation}\label{Int.T}
  T=(D_1+D_0)B,
\end{equation}
where $D_1$ is a self-adjoint first-order differential operator, $D_0$ is a normal bounded multiplication operator and $B$ is a bounded accretive operator depending on the material properties of the cross section of the waveguide. With such variable coefficients, the operator $T$ will not be self-adjoint. Even in the static case $D_0=0$, $T$ is only a bi-sectoral operator (see \cite{PAA}) and $L^2(\Omega)$ bounds of $\exp(-tT)$, and more general functions $f(T)$ of $T$, is a non-trivial matter. However, in the general non-smooth case, this is well understood from the works of Axelsson, Keith and McIntosh \cite{AKM} and Auscher, Axelsson and McIntosh \cite{AAM}.
In the present paper we extend these results to the case $D_0\ne 0$ which
occurs for example in general time-harmonic, but non-static, wave propagation in waveguides.

In Section \ref{SP and FC} we study functional calculi of operators of the form \eqref{Int.T},
which we show have $L^2(\Omega)$ spectra contained in regions
\begin{equation*}
  S_{\omega,\tau}:=\{x+iy\in \mathrm{C}: |y|<|x|\tan \omega+\tau\}.
\end{equation*}
To have a theory for general frequencies of oscillation, encoded by the zero-order term $D_0$, it is essential
to require the cross section $\Omega$ to be bounded, which ensures that the spectrum is discrete. However, the compactness of resolvents and the discreteness of spectrum only holds for $T$ in the range of $D_1+D_0$, which is invariant under $T$.
Building on fundamental quadratic estimates (see \cite{AlbrechtDuongMcIntosh}) for operators $T$ in the static case, we are able to construct and prove $L^2(\Omega)$ estimates of
a generalised Riesz--Dunford functional calculus of $T$. To yield a well defined and bounded operator $f(T)$, the symbol $f(z)$ is required to be uniformly bounded and holomorphic on an open neighbourhood of the spectrum of $T$ except at $\infty$, where it is only required to be bounded and holomorphic on a bi-sector $|y|\le \tan\omega |x|$, $\omega<\pi/2$, in a neighbourhood of $\infty$. Due to the deep quadratic estimates from harmonic analysis used in Proposition \ref{quadratic estimate}, this suffices to bound $f(T)$ at $\infty$.

Another novelty in estimating $f(T)$, due to the non-self adjointness of $T$, is that $\|f(T)\|$ may depend not only on $|f(\lambda)|$, but also on a finite number of derivatives $f^{(k)}(\lambda)$ at a given eigenvalue $\lambda$ of $T$.
In particular, an eigenvalue of $T$ on the imaginary axis with index/algebraic multiplicity greater than $1$, will result in propagating waves $u_t= \exp(-tT)u_0$ which grow polynomially.

Note that since the spectrum is discrete, a symbol like
\begin{equation*}
  f(z)=\begin{cases}
    e^{-tz}, & \mbox{if } \mathrm{Re} z>a, \\
    0, & \mbox{if } \mathrm{Re} z\leq a,
  \end{cases}
\end{equation*}
for $t>0$, is admissible provided no eigenvalue lies on $\mathrm{Re} z=a$,
and will yield an operator bounded on $L^2(\Omega)$.
In this sense the functional calculus that we here construct is more general than that considered by Morris in \cite{Morris}.

In the final Section \ref{Applications}, we apply our  new functional calculus for operators $T$ to show how all polynomially bounded time-harmonic waves in the semi- or bi-infinite waveguide can be represented
like \eqref{Int.sol}, with $u_0$ in appropriate spectral subspace for $T$.

\section{Partial differential equations expressed as vector-valued ordinary differential equation}\label{pde as ode}

In this section we consider Helmholtz and Maxwell's equations and express them as vector-valued ordinary differential equations in terms of operator $DB$, which is introduced later.

Throughout this paper $\Omega=\Omega^+\subset \mathrm{R}^n$ denotes bounded open set, separated from the exterior domain $\Omega^-=\mathrm{R}^n\setminus\Omega$ by weakly Lipschitz interface $\Gamma=\partial\Omega$, defined as follows.

\begin{defn}
The interface $\Gamma$ is weakly Lipschitz if, for all $y\in\Gamma$, there exists a neighbourhood $V_y\ni y$ and a global bilipschitz map $\rho_y:\mathrm{R}^n\rightarrow\mathrm{R}^n$ such that
\begin{equation*}
  \Omega^{\pm}\cap V_y=\rho_y\left(\mathrm{R}^{n}_{\pm}\right)\cap V_y,
\end{equation*}
\begin{equation*}
  \Gamma\cap V_y=\rho_y\left(\mathrm{R}^{n-1}\right)\cap V_y,
\end{equation*}
were $\mathrm{R}^n_+=\mathrm{R}^{n-1}\times(0,+\infty)$ and $\mathrm{R}^n_-=\mathrm{R}^{n-1}\times(-\infty,0)$. In this case $\Omega$ is called a weakly Lipschitz domain.
\end{defn}

\subsection{Helmholtz equation}\label{Helmholtz subsection}
Let $\Omega\subset \mathrm{R}^n$ be a bounded weakly Lipschitz domain and $A\in L_{\infty}\left(\Omega; \mathcal{L}\left(\mathrm{C}^{n+2}\right)\right)$ be $t$-independent and pointwise strictly accretive in the sense that there exist $\alpha>0$ such that
\begin{equation}\label{accretive}
  \text{Re}(A(x)v,v)\geq\alpha\|v\|^2
\end{equation}
for all $x\in \mathrm{R}^n$ and $v\in \mathrm{C}^{n+2}$. For complex number $k\neq0$, we consider an equation
\begin{equation}\label{Helm main eq}
\begin{bmatrix}
\text{div}_{(t,x)}&k\\
\end{bmatrix}
A
\begin{bmatrix}
\nabla_{(t,x)}\\
k
\end{bmatrix}
u=0
\end{equation}
in $\Omega\times\mathrm{R}$ with $u\in H^1_0(\Omega)$ for all $t\in \mathrm{R}$.

Let us set
$$
  H_{\text{div}}(\Omega;\mathrm{C}^{n}):=\{f\in L_2(\Omega;\mathrm{C}^{n}): \; \text{div}f\in L_2(\Omega)\}
$$
and define divergence and gradient operators, respect to an argument $x$, with domains $H_{\text{div}}(\Omega)$ and $H^1_0(\Omega)$ by $\text{div}$ and $\nabla_0$ respectively.

Splitting $\mathrm{C}^{n+2}$ in to $\mathrm{C}$ and $\mathrm{C}^{n+1}$, we decompose the matrix $A(x)$ in the following way
$$
A(x)=\begin{bmatrix}
A_{\perp\perp}(x)&A_{\perp\parallel}(x)\\
A_{\parallel\perp}(x)&A_{\parallel\parallel}(x)\\
\end{bmatrix}.
$$
Then we can write the equation \eqref{Helm main eq} in form
$$
\begin{bmatrix}
\partial_t&
\begin{bmatrix}
\textrm{div}&k
\end{bmatrix}\\
\end{bmatrix}
\begin{bmatrix}
A_{\perp\perp}(x)&A_{\perp\parallel}(x)\\
A_{\parallel\perp}(x)&A_{\parallel\parallel}(x)\\
\end{bmatrix}
\begin{bmatrix}
\partial_{t}u\\
\begin{bmatrix}
\nabla_0 u\\
ku
\end{bmatrix}
\end{bmatrix}=0.
$$
Hence
\begin{equation}\label{1e2}
\begin{bmatrix}
\partial_t&
\begin{bmatrix}
\textrm{div}&k
\end{bmatrix}\\
\end{bmatrix}
\begin{bmatrix}
A_{\perp\perp}\partial_{t}u+A_{\perp\parallel}
\begin{bmatrix}\nabla_0 u\\ku\end{bmatrix}\\
A_{\parallel\perp}\partial_{t}u+A_{\parallel\parallel}\begin{bmatrix}
\nabla_0 u\\
ku
\end{bmatrix}
\end{bmatrix}=0.
\end{equation}
Next define $f$ as
\begin{equation}\label{def of f}
f=
\begin{bmatrix}
f_{\perp}\\
f_{\parallel}
\end{bmatrix}:=
\begin{bmatrix}
A_{\perp\perp}\partial_{t}u+A_{\perp\parallel}
\begin{bmatrix}\nabla_0 u\\ku\end{bmatrix}\\
\begin{bmatrix}
\nabla_0 u\\
ku
\end{bmatrix}
\end{bmatrix}.
\end{equation}
Since $A$ is pointwise strictly accretive, all diagonal blocks are pointwise strictly accretive, and consequently invertible. In particular, $A_{\perp\perp}$ is invertible. Hence, due to \eqref{def of f}, we obtain $\partial_tu=A_{\perp\perp}^{-1}(f_{\perp}-A_{\perp\parallel}f_{\parallel})$. Therefore we can write equation \eqref{1e2} in terms of $f$
$$
\begin{bmatrix}
\partial_t&
\begin{bmatrix}
\textrm{div}&k
\end{bmatrix}\\
\end{bmatrix}
\begin{bmatrix}
A_{\perp\perp}A_{\perp\perp}^{-1}(f_{\perp}-A_{\perp\parallel}f_{\parallel})+A_{\perp\parallel}
f_{\parallel}\\
A_{\parallel\perp}A_{\perp\perp}^{-1}(f_{\perp}-A_{\perp\parallel}f_{\parallel})+A_{\parallel\parallel}f_{\parallel}
\end{bmatrix}=0,
$$
hence
\begin{equation}\label{1e3}
\begin{bmatrix}
\partial_t&
\begin{bmatrix}
\textrm{div}&k
\end{bmatrix}\\
\end{bmatrix}
\begin{bmatrix}
f_{\perp}\\
A_{\parallel\perp}A_{\perp\perp}^{-1}(f_{\perp}-A_{\perp\parallel}f_{\parallel})+A_{\parallel\parallel}f_{\parallel}
\end{bmatrix}=0.
\end{equation}
On the other hand, from definition of $f_{\parallel}$, we obtain
$$
\partial_tf_{\parallel}=
\begin{bmatrix}
\nabla_0 \partial_t u\\
k\partial_tu
\end{bmatrix}=
\begin{bmatrix}
\nabla_0 \\
k
\end{bmatrix}
(A_{\perp\perp}^{-1}(f_{\perp}-A_{\perp\parallel}f_{\parallel})),
$$
which, together with \eqref{1e3}, give us the system of equations
$$
\begin{cases}
\partial_tf_{\perp}+
\begin{bmatrix}
\textrm{div}&k
\end{bmatrix}
(A_{\parallel\perp}A_{\perp\perp}^{-1}(f_{\perp}-A_{\perp\parallel}f_{\parallel})+A_{\parallel\parallel}f_{\parallel})=0 \\
\partial_tf_{\parallel}-
\begin{bmatrix}
\nabla_0\\
k
\end{bmatrix}
A_{\perp\perp}^{-1}(f_{\perp}-A_{\perp\parallel}f_{\parallel})=0.
\end{cases}
$$
In vector notation, we equivalently have
$$
\partial_t
\begin{bmatrix}
f_{\perp}\\
f_{\parallel}
\end{bmatrix}
+
\begin{bmatrix}
0&
\begin{bmatrix}
\textrm{div}&k
\end{bmatrix}\\
-\begin{bmatrix}
\nabla_0\\
k
\end{bmatrix}
&0
\end{bmatrix}
\begin{bmatrix}
A_{\perp\perp}^{-1}&-A_{\perp\perp}^{-1}A_{\perp\parallel}\\
A_{\parallel\perp}A_{\perp\perp}^{-1}&A_{\parallel\parallel}-A_{\parallel\perp}A_{\perp\perp}^{-1}A_{\perp\parallel}
\end{bmatrix}
\begin{bmatrix}
f_{\perp}\\
f_{\parallel}
\end{bmatrix}=0.
$$
Define
$$
B:=
\begin{bmatrix}
A_{\perp\perp}^{-1}&-A_{\perp\perp}^{-1}A_{\perp\parallel}\\
A_{\parallel\perp}A_{\perp\perp}^{-1}&A_{\parallel\parallel}-A_{\parallel\perp}A_{\perp\perp}^{-1}A_{\perp\parallel}
\end{bmatrix}
$$
and
$$
D:=
\begin{bmatrix}
0&
\begin{bmatrix}
\textrm{div}&k
\end{bmatrix}\\
-\begin{bmatrix}
\nabla_{0}\\
k
\end{bmatrix}&0
\end{bmatrix}
$$
with domain
$$
\mathbf{D}(D)=\{f=(f_{1},f_2,f_3)\in L_{2}(\Omega; \mathrm{C}^{2+n}): \; f_{1}\in H_0^1(\Omega),
$$
$$
f_{2}\in H_{\text{div}}(\Omega;\mathrm{C}^n), \; f_{3}\in L_{2}(\Omega)\},
$$
so that the equation becomes
\begin{equation}\label{first order equation}
\partial_tf+DBf=0,
\end{equation}
together with the constraint $f\in\mathbf{R}(D)$ for each fixed $t\in \mathrm{R}$.

Since $A$ is a pointwise strictly accretive operator, in [\cite{AAM}, Proposition 3.2] it was noted that $B$ is a strictly accretive multiplication operator just like $A$.

By the above arguments, the equation \eqref{Helm main eq} for $u$ implies that $f$, defined above, solves the equation \eqref{first order equation}. Moreover, the converse is also true, i.e. the following proposition holds.

\begin{prop}\label{Helm second implies first}
If $(f,\nabla_0 g, kg)\in\mathbf{R}(D)$ solves equation \eqref{first order equation}, then $g$ solves equation \eqref{Helm main eq}.
\end{prop}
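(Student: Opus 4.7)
The plan is to reverse the derivation carried out above, essentially running the computation backwards. Given a solution $f$ of \eqref{first order equation} whose $\parallel$-part has the special form $f_\parallel = (\nabla_0 g, kg)^T$, I would unpack the vector ODE into its $\perp$- and $\parallel$-components and then recover both the identification $f_\perp = A_{\perp\perp}\partial_t g + A_{\perp\parallel} f_\parallel$ and Helmholtz's equation for $g$ by purely algebraic manipulation.

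First I would write out the $\parallel$-component of $\partial_t f + DBf = 0$. Using the explicit form of $D$ and the upper block of $B$, it reads
$$
\partial_t \begin{bmatrix}\nabla_0 g\\ kg\end{bmatrix} - \begin{bmatrix}\nabla_0\\ k\end{bmatrix} A_{\perp\perp}^{-1}\bigl(f_\perp - A_{\perp\parallel} f_\parallel\bigr) = 0.
$$
Reading off the scalar bottom entry and using $k\neq 0$ gives the algebraic identity
$$
\partial_t g = A_{\perp\perp}^{-1}\bigl(f_\perp - A_{\perp\parallel} f_\parallel\bigr),
$$
which rearranges into $f_\perp = A_{\perp\perp}\partial_t g + A_{\perp\parallel}(\nabla_0 g, kg)^T$. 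Setting $u := g$, this shows $f$ is precisely the vector defined by \eqref{def of f} for this $u$, and in particular the top $n$ rows of the $\parallel$-equation $\nabla_0 \partial_t g = \nabla_0 A_{\perp\perp}^{-1}(f_\perp - A_{\perp\parallel} f_\parallel)$ are then automatically consistent.

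Next I would feed this identification into the $\perp$-component of the ODE. A direct calculation with the block form of $B$ shows
$$
\begin{bmatrix} f_\perp \\ (Bf)_\parallel \end{bmatrix} = \begin{bmatrix} A_{\perp\perp}\partial_t g + A_{\perp\parallel}(\nabla_0 g, kg)^T \\ A_{\parallel\perp}\partial_t g + A_{\parallel\parallel}(\nabla_0 g, kg)^T \end{bmatrix} = A\begin{bmatrix} \partial_t g \\ \nabla_0 g \\ kg\end{bmatrix}.
$$
The $\perp$-equation
$$
\partial_t f_\perp + [\textrm{div}\;\; k]\, (Bf)_\parallel = 0
$$
then becomes exactly $[\partial_t \;\; \textrm{div}\;\; k]\, A\, [\partial_t g, \nabla_0 g, kg]^T = 0$, which is \eqref{Helm main eq} with $u=g$, since $\partial_t + \textrm{div} = \textrm{div}_{(t,x)}$ and $(\partial_t, \nabla_0) = \nabla_{(t,x)}$.

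The only point requiring any care is the inversion step in the second paragraph: recovering $\partial_t g$ from the overdetermined first-order expression $(\nabla_0 \partial_t g, k\partial_t g)^T$. The clean resolution is to use the scalar bottom entry and the hypothesis $k\neq 0$, which makes the step purely algebraic and avoids any issue with the kernel of $\nabla_0$ or regularity concerns; the membership of $f$ in $\mathbf{R}(D)$ and of $g$ in $H_0^1(\Omega)$ then guarantees that all expressions involved live in the appropriate function spaces. Everything else is block-matrix bookkeeping essentially identical to the forward derivation.
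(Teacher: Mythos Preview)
Your proof is correct and follows essentially the same approach as the paper's: both unpack the vector ODE into its two components, use the $\parallel$-equation to identify $\partial_t g = A_{\perp\perp}^{-1}(f_\perp - A_{\perp\parallel} f_\parallel)$ and hence $f_\perp$, and then substitute into the $\perp$-equation to recover \eqref{Helm main eq}. You are in fact slightly more explicit than the paper about why the identification of $\partial_t g$ is legitimate (reading off the scalar bottom entry and using $k\neq 0$), which the paper's proof passes over silently.
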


\begin{proof}
Let $(f,\nabla_0 g, kg)\in\mathbf{R}(D)$ be a solution of equation \eqref{first order equation}, then
\begin{equation}\label{system first impies second}
\begin{cases}
\partial_tf+
\begin{bmatrix}
\textrm{div}&k
\end{bmatrix}
\left(A_{\parallel\perp}A_{\perp\perp}^{-1}\left(f-A_{\perp\parallel}\begin{bmatrix}\nabla_0 g\\kg \end{bmatrix}\right)+A_{\parallel\parallel}\begin{bmatrix}\nabla_0 g\\kg \end{bmatrix}\right)=0, \\
\partial_t\begin{bmatrix}\nabla_0 g\\kg \end{bmatrix}-
\begin{bmatrix}
\nabla_0\\
k
\end{bmatrix}
A_{\perp\perp}^{-1}\left(f-A_{\perp\parallel}\begin{bmatrix}\nabla_0 g\\kg \end{bmatrix}\right)=0.
\end{cases}
\end{equation}
The first equation of \eqref{system first impies second} can be written in form
\begin{equation}\label{section2.system.first equation}
\begin{bmatrix}
\partial_t&
\begin{bmatrix}
\textrm{div}&k
\end{bmatrix}\\
\end{bmatrix}
\begin{bmatrix}
f\\
A_{\parallel\perp}A_{\perp\perp}^{-1}(f-A_{\perp\parallel}\begin{bmatrix}\nabla_0 g\\kg \end{bmatrix})+A_{\parallel\parallel}\begin{bmatrix}\nabla_0 g\\kg \end{bmatrix}
\end{bmatrix}=0.
\end{equation}
From the second equation of the system \eqref{system first impies second}, we see
\begin{equation}\label{sec2.1}
\partial_t g=
A_{\perp\perp}^{-1}\left(f-A_{\perp\parallel}\begin{bmatrix}\nabla_0 g\\kg \end{bmatrix}\right),
\end{equation}
thus
\begin{equation}\label{sec2.2}
f=A_{\perp\perp}\partial_t g+A_{\perp\parallel}\begin{bmatrix}\nabla_0 g\\kg \end{bmatrix}.
\end{equation}
Setting \eqref{sec2.1} and \eqref{sec2.2} in to the formula \eqref{section2.system.first equation}, we get
$$
\begin{bmatrix}
\partial_t&
\begin{bmatrix}
\textrm{div}_{\parallel}&k
\end{bmatrix}\\
\end{bmatrix}
\begin{bmatrix}
A_{\perp\perp}\partial_t g+A_{\perp\parallel}\begin{bmatrix}\nabla_0 g\\kg \end{bmatrix}\\
A_{\parallel\perp}\partial_t g+A_{\parallel\parallel}\begin{bmatrix}\nabla_0 g\\kg \end{bmatrix}
\end{bmatrix}=0.
$$
This shows that $g$ solves equation \eqref{Helm main eq}.
\end{proof}

Let us define operators

\begin{equation*}
  D_1:=\begin{bmatrix}
         0 & \text{div} & 0  \\
         -\nabla_0 & 0 & 0  \\
         0 & 0 & 0
       \end{bmatrix},
       \qquad
       D_0:=\begin{bmatrix}
              0 & 0 & k \\
              0 & 0 & 0 \\
              -k & 0 & 0
            \end{bmatrix}
\end{equation*}
with domains $\mathbf{D}(D)$ and $L_2(\Omega; \mathrm{C}^{n+2})$. Then
\begin{equation*}
  D=D_1+D_0.
\end{equation*}

\begin{rem}
Note that $D_1$ is a self-adjoint operator, see [\cite{KZ}, theorem 6.2], and $D_0$ is a bounded operator. Therefore $D$ is a closed operator and
\begin{equation*}
  D^*=D_1^*+D_0^*=\begin{bmatrix}
         0 & \text{div} & -\overline{k} \\
         -\nabla_0 & 0 & 0 \\
         \overline{k} & 0 & 0
       \end{bmatrix}.
\end{equation*}
\end{rem}


\subsection{Maxwell's equation}\label{Maxwell subsection}
Let $\Omega\subset \mathrm{R}^2$ be a bounded weakly Lipschitz domain. By Rademachers's theorem the surface $\partial\Omega$ has a tangent plane and an outward pointing unit normal $n(x)$ at almost every $x\in \partial\Omega$. We introduce the Sobolev spaces
\begin{equation*}
  H_{\textrm{div}}(\Omega;\mathrm{C}^2):=\{f\in L_2(\Omega;\mathrm{C}^2): \; \textrm{div}f\in L_2(\Omega)\},
\end{equation*}
\begin{equation*}
  H_{\textrm{curl}}(\Omega;\mathrm{C}^2):=\{f\in L_2(\Omega;\mathrm{C}^2): \; \textrm{curl}f\in L_2(\Omega)\}
\end{equation*}
and
\begin{equation*}
  H^0_{\textrm{div}}(\Omega;\mathrm{C}^2):=\{f\in H_{\textrm{div}}(\Omega;\mathrm{C}^2): \; \textrm{div}(\tilde{f})\in L_2(\mathrm{R}^2)\},
\end{equation*}
\begin{equation*}
  H^0_{\textrm{curl}}(\Omega;\mathrm{C}^2):=\{f\in H_{\textrm{curl}}(\Omega;\mathrm{C}^2): \; \textrm{curl}(\tilde{f})\in L_2(\mathrm{R}^2)\}.
\end{equation*}
where $\tilde{f}$ denotes the zero-extension of $f$ to $\mathrm{R}^2$.

The last two spaces have the following geometric meaning. Assume that $f\in H^0_{\textrm{div}}(\Omega;\mathrm{C}^2)$, then there exists a sequence $\{\psi_k\}_{k=1}^{\infty}\subset C_0^{\infty}(\mathrm{R}^2;\mathrm{C}^2)$ such that $\psi_k\rightarrow f$ and $\textrm{div}\psi_k\rightarrow \textrm{div}f$, see [\cite{AndreasRosen}, Definition 8.14, Lemma 8.18]. Hence for $\phi\in C_0^{\infty}(\mathrm{R}^2)$, we obtain
\begin{equation*}
  \int_{\Omega}(\textrm{div}f,\phi)-\int_{\Omega}(f,-\nabla\phi)=
  \lim_{k\rightarrow\infty}\left(\int_{\Omega}(\textrm{div}\psi_k,\phi)-\int_{\Omega}(\psi_k,-\nabla\phi)\right)=0.
\end{equation*}
Hence the Stokes' theorem implies formally
\begin{equation*}
  \int_{\partial\Omega}(f\cdot n,\phi)=\int_{\Omega}(\textrm{div}f,\phi)-\int_{\Omega}(f,-\nabla\phi)=0.
\end{equation*}
Therefore we interpret $f\in H^0_{\textrm{div}}(\Omega;\mathrm{C}^2)$ as saying, beside $\text{div}f\in L_2(\Omega)$, that $f$ is tangential on the boundary in a weak sense. Similarly, the condition $f\in H^0_{\textrm{curl}}(\Omega;\mathrm{C}^2)$ means $\textrm{curl}f\in L_2(\Omega)$ and that $f$ is normal on the boundary in a weak sense.

By $\nabla$, $\nabla_0$, $\textrm{div}$ and $\textrm{div}_0$, we define gradient and divergence operators on $H^1(\Omega)$, $H^1_0(\Omega)$, $H_{\textrm{div}}(\Omega;\mathrm{C}^2)$ and $H^0_{\textrm{div}}(\Omega;\mathrm{C}^2)$ respectively.

\begin{rem}\label{WHdiv equals Hcurl}
For bounded weakly Lipschitz domain $\Omega\subset \mathrm{R}^2$ and function $f\in H_{\textrm{div}}(\Omega;\mathrm{C}^2)$, we see
\begin{equation*}
  \textrm{curl}Jf=\textrm{div} f,
  \qquad
  f\cdot n=Jf\times n
\end{equation*}
where
\begin{equation*}
  J=\begin{bmatrix}
      0 & -1 \\
      1 & 0
    \end{bmatrix}.
\end{equation*}
This gives
\begin{equation*}
  JH_{\text{div}}(\Omega;\mathrm{C}^2)=H_{\text{curl}}(\Omega;\mathrm{C}^2),
  \qquad
  JH_{\text{div}}^{0}(\Omega;\mathrm{C}^2)=H_{\text{curl}}^{0}(\Omega;\mathrm{C}^2).
\end{equation*}
\end{rem}

Let $\mu(x),\varepsilon(x)\in L_{\infty}\left(\mathrm{R}^2;\mathcal{L}\left(\mathrm{C}^3\right)\right)$ be pointwise strictly accretive matrices, see \eqref{accretive}. For complex number $\omega\neq 0$, we consider the Maxwell's system of equations
\begin{equation}\label{Maxwell's equation}
  \begin{cases}
    \text{div}_{(t,x)}\mu H=0, \\
    i \omega\mu H+\text{curl}_{(t,x)}E=0,\\
    i \omega\varepsilon E-\text{curl}_{(t,x)}H=0,\\
    \text{div}_{(t,x)}\varepsilon E=0\\
  \end{cases}
\end{equation}
in $\mathrm{R}\times\Omega$ with
\begin{equation*}
  \mu H\in L_2(\Omega)\times H^0_{\textrm{div}}(\Omega;\mathrm{C}^2),
\end{equation*}
\begin{equation*}
  E\in L_2(\Omega)\times H^0_{\textrm{curl}}(\Omega;\mathrm{C}^2)
\end{equation*}
for any fixed $t\in \mathrm{R}$.

According to the splitting $\mathrm{C}^3$ into $\mathrm{C}$ and $\mathrm{C}^2$, we write
\begin{equation*}
  H=\begin{bmatrix}
      H_{\perp} \\
      H_{\parallel}
    \end{bmatrix},
  \quad
  E=\begin{bmatrix}
      E_{\perp} \\
      E_{\parallel}
    \end{bmatrix},
\end{equation*}

\begin{equation*}
  \mu=\begin{bmatrix}
         \mu_{\perp\perp} & \mu_{\perp\parallel} \\
         \mu_{\parallel\perp} & \mu_{\parallel\parallel}
       \end{bmatrix},
       \quad
  \varepsilon=\begin{bmatrix}
                 \varepsilon_{\perp\perp} & \varepsilon_{\perp\parallel} \\
                 \varepsilon_{\parallel\perp} & \varepsilon_{\parallel\parallel}
       \end{bmatrix}
\end{equation*}
and define auxiliary matrices
\begin{equation*}
  \overline{\mu}:=
  \begin{bmatrix}
  \mu_{\perp\perp} & \mu_{\perp\parallel} \\
  0 & I
  \end{bmatrix},
  \quad
  \underline{\mu}:=
       \begin{bmatrix}
         1 & 0 \\
         \mu_{\parallel\perp} & \mu_{\parallel\parallel}
       \end{bmatrix},
\end{equation*}

\begin{equation*}
  \overline{\varepsilon}:=
  \begin{bmatrix}
  \varepsilon_{\perp\perp} & \varepsilon_{\perp\parallel} \\
  0 & I
  \end{bmatrix},
  \quad
  \underline{\varepsilon}:=
  \begin{bmatrix}
  1 & 0 \\
  \varepsilon_{\parallel\perp} & \varepsilon_{\parallel\parallel}
  \end{bmatrix},
\end{equation*}
\begin{equation*}
A=\begin{bmatrix}
      \mu & 0 \\
      0 & \varepsilon
    \end{bmatrix},
    \quad
\overline{A}:=\begin{bmatrix}
                    \overline{\mu} & 0 \\
                    0 & \overline{\varepsilon}
                  \end{bmatrix},
                  \qquad
\underline{A}:=\begin{bmatrix}
                    \underline{\mu} & 0 \\
                    0 & \underline{\varepsilon}
                    \end{bmatrix}.
\end{equation*}

Since $\mu$, $\varepsilon$ are pointwise strictly accretive, we conclude that $\mu_{\perp\perp}$, $\varepsilon_{\perp\perp}$ are pointwise strictly accretive, and consequently $\overline{\mu}$, $\overline{\varepsilon}$ and $\overline{A}$ are invertible.

Let $I_{\perp}=\{I_{\perp}^{i,j}\}_{i,j=1}^6$ be a $6$ by $6$ matrix such that $I_{\perp}^{1,1}=I_{\perp}^{4,4}=1$ and other elements are zeros. We set $I_{\parallel}=I-I_{\perp}$. From the first and forth equations of \eqref{Maxwell's equation}, we get
\begin{equation}\label{Max eq1}
  \partial_{t}I_{\perp}AG+\begin{bmatrix}
                               0 & \text{div}_0 & 0 & 0 \\
                               -\nabla & 0 & 0 & 0 \\
                               0 & 0 & 0 & \text{div} \\
                               0 & 0 & -\nabla_0 & 0
                             \end{bmatrix}I_{\parallel}AG=0,
  \quad
  \text{where}
  \quad
  G:=\begin{bmatrix}
       H \\
       G
     \end{bmatrix}.
\end{equation}
From the second and third equations of \eqref{Maxwell's equation}, we obtain
\begin{equation}\label{Max eq2}
  \partial_{t}I_{\parallel}G+\begin{bmatrix}
                               0 & \text{div}_0 & 0 & 0 \\
                               -\nabla & 0 & 0 & 0 \\
                               0 & 0 & 0 & \text{div} \\
                               0 & 0 & -\nabla_0 & 0
                             \end{bmatrix}I_{\perp}G-\begin{bmatrix}
                                                        0 & 0 & 0 & 0 \\
                                                        0 & 0 & 0 & i\omega J \\
                                                        0 & 0 & 0 & 0 \\
                                                        0 & -i\omega J & 0 & 0
                                                      \end{bmatrix}I_{\parallel}AG=0.
\end{equation}
Since $I_{\perp}AG=I_{\perp}\overline{A}G$, $I_{\parallel}AG=I_{\parallel}\underline{A}G$, $G_{\parallel}=I_{\parallel}\overline{A}G$ and $I_{\perp}G=I_{\perp}\underline{A}G$, we can combine equations \eqref{Max eq1} and \eqref{Max eq2} in the following way
\begin{equation}\label{Max eq3}
  \partial_{t}\overline{A}G+\begin{bmatrix}
                               0 & \text{div}_0 & 0 & 0 \\
                               -\nabla & 0 & 0 & i\omega J \\
                               0 & 0 & 0 & \text{div} \\
                               0 & -i\omega J & -\nabla_0 & 0
                             \end{bmatrix}\underline{A}G=0.
\end{equation}
Define
\begin{equation*}
  D:=
  \begin{bmatrix}
    0 & \text{div}_0 & 0 & 0 \\
    -\nabla & 0 & 0 & i\omega J \\
    0 & 0 & 0 & \text{div} \\
    0 & -i\omega J & -\nabla_0 & 0
  \end{bmatrix}
\end{equation*}
with domain
$$
\mathbf{D}(D)=\{f=(f_1,f_2,f_3,f_4)\in L_2(\Omega): \; f_1\in H^1(\Omega), \; f_2\in H^{0}_{\text{div}}(\Omega;\mathrm{C}^2),
$$
$$
f_3\in H^1_0(\Omega), \; f_4\in H_{\text{div}}(\Omega;\mathrm{C}^2) \}.
$$

Let $B:=\underline{A}\overline{A}^{-1}$, $F:=\overline{A}G$, so that equation \eqref{Max eq3} becomes
\begin{equation}\label{Max main eq}
  \partial_{t}F+DBF=0
\end{equation}
together with constraint $F\in\mathbf{R}(D)$ for each fixed $t\in \mathrm{R}$.

To see that the system of equations \eqref{Maxwell's equation} and the equation of \eqref{Max main eq} are equivalent, let us prove analogue of Proposition \ref{Helm second implies first}.

\begin{prop}\label{Max second implies first}
Let $f(t,x)$ and $g(t,x)$ be three dimensional vector functions such that $(f,g)$ solves equation \eqref{Max main eq} and $(f,g)\in \mathbf{R}(D)\cap\mathbf{D}(DB)$ for each fixed $t\in\mathrm{R}$, then vector functions
\begin{equation*}
  H=\overline{\mu}^{-1}f,
  \qquad
  E=\overline{\varepsilon}^{-1}g
\end{equation*}
solve the system of equations \eqref{Maxwell's equation} and for any fixed $t\in \mathrm{R}$,
\begin{equation*}
  \mu H\in L_2(\Omega)\times H^0_{\mathrm{div}}(\Omega;\mathrm{C}^2),
\end{equation*}
\begin{equation*}
  E\in L_2(\Omega)\times H^0_{\mathrm{curl}}(\Omega;\mathrm{C}^2).
\end{equation*}
\end{prop}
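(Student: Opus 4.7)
The approach is to reverse the derivation that took the system \eqref{Maxwell's equation} to the first-order equation \eqref{Max main eq}. Setting $G := \overline{A}^{-1} F$ with $F = (f,g)$ gives $G = (H,E)$ with $H = \overline{\mu}^{-1} f$ and $E = \overline{\varepsilon}^{-1} g$, and one verifies directly that $F = \overline{A} G$ has components $F_1 = (\mu H)_\perp$, $F_2 = H_\parallel$, $F_3 = (\varepsilon E)_\perp$, $F_4 = E_\parallel$, while $BF = \underline{A}\overline{A}^{-1}\overline{A}G = \underline{A}G$ has components $(BF)_1 = H_\perp$, $(BF)_2 = (\mu H)_\parallel$, $(BF)_3 = E_\perp$, $(BF)_4 = (\varepsilon E)_\parallel$.

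I would then expand $\partial_t F + DBF = 0$ component by component using the explicit form of $D$. Components $1$ and $3$ yield
\[
\partial_t (\mu H)_\perp + \text{div}_0 (\mu H)_\parallel = 0, \qquad \partial_t (\varepsilon E)_\perp + \text{div}(\varepsilon E)_\parallel = 0,
\]
which are Maxwell's first and fourth equations $\text{div}_{(t,x)} \mu H = 0$ and $\text{div}_{(t,x)} \varepsilon E = 0$, with the tangential boundary condition on $\mu H$ encoded by $(\mu H)_\parallel \in H^0_{\text{div}}$. Components $2$ and $4$ yield
\[
\partial_t H_\parallel - \nabla H_\perp + i\omega J(\varepsilon E)_\parallel = 0, \qquad \partial_t E_\parallel - i\omega J(\mu H)_\parallel - \nabla_0 E_\perp = 0.
\]
Multiplying each by $-J$, using $J^2 = -I$, and comparing with the coordinate expression of $\text{curl}_{(t,x)}$ (which on the $\parallel$-part is $\partial_t J (\cdot)_\parallel - J\nabla(\cdot)_\perp$, consistent with $\text{curl}(Jv) = \text{div}\, v$ from Remark \ref{WHdiv equals Hcurl}), these are identified as the parallel components of Maxwell's third and second equations; the Dirichlet condition $E_\perp = 0$ on $\partial\Omega$ is built in via $\nabla_0$.

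The perpendicular components of Maxwell's second and third equations are the missing scalar equations, and this is the main obstacle: they cannot be read off from the evolution equation, and here the hypothesis $F \in \mathbf{R}(D)$ is essential. At each fixed $t$, write $F = D G''$ with $G'' \in \mathbf{D}(D)$. Then $H_\parallel = F_2 = -\nabla G''_1 + i\omega J G''_4$, and applying $\text{curl}$ together with $\text{curl}(\nabla G''_1) = 0$ and $\text{curl}(J G''_4) = \text{div}\, G''_4$ from Remark \ref{WHdiv equals Hcurl}, I obtain
\[
\text{curl}\, H_\parallel = i\omega \,\text{div}\, G''_4 = i\omega F_3 = i\omega (\varepsilon E)_\perp,
\]
which is the $\perp$ part of Maxwell's third equation. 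The analogous calculation on $E_\parallel = F_4 = -i\omega J G''_2 - \nabla_0 G''_3$ gives $\text{curl}\, E_\parallel = -i\omega (\mu H)_\perp$, the $\perp$ part of Maxwell's second equation.

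Finally, for the regularity claims, $F \in \mathbf{D}(DB)$ means $BF \in \mathbf{D}(D)$, which directly yields $H_\perp \in H^1(\Omega)$, $(\mu H)_\parallel \in H^0_{\text{div}}(\Omega;\mathrm{C}^2)$, $E_\perp \in H^1_0(\Omega)$ and $(\varepsilon E)_\parallel \in H_{\text{div}}(\Omega;\mathrm{C}^2)$; the first two imply $\mu H \in L_2(\Omega) \times H^0_{\text{div}}(\Omega;\mathrm{C}^2)$. For $E_\parallel \in H^0_{\text{curl}}$, I would again use the representation $E_\parallel = -i\omega J G''_2 - \nabla_0 G''_3$ afforded by the range condition: the first summand lies in $J H^0_{\text{div}} = H^0_{\text{curl}}$ by Remark \ref{WHdiv equals Hcurl}, while $\nabla_0 G''_3 \in H^0_{\text{curl}}$ follows because, for $G''_3 \in H^1_0(\Omega)$, the zero extension of $\nabla G''_3$ is the distributional gradient of the zero extension of $G''_3$, hence curl-free in $L_2(\mathrm{R}^2)$.
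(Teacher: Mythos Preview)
Your proof is correct and follows essentially the same route as the paper's: both expand $\partial_t F + DBF = 0$ into four scalar equations, recover the two missing $\perp$-components of Maxwell's second and third equations from the constraint $F\in\mathbf{R}(D)$, and read off the regularity from $BF\in\mathbf{D}(D)$ together with the range condition. The only cosmetic difference is that where you write $F = DG''$ and compute $\mathrm{curl}\,H_\parallel$ and $\mathrm{curl}\,E_\parallel$ directly, the paper packages this computation (and the $E_\parallel\in H^0_{\mathrm{curl}}$ conclusion) into a separate characterization of $\mathbf{R}(D)$, namely Proposition~\ref{Max H equals RD}, and simply cites it.
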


\begin{proof}
  Splitting $\mathrm{C}^3$ into $\mathrm{C}$ and $\mathrm{C}^2$, we write
  \begin{equation*}
    f=\begin{bmatrix}
       f_{\perp} \\
       f_{\parallel}
     \end{bmatrix},
     \quad
    g=\begin{bmatrix}
       g_{\perp} \\
       g_{\parallel}
     \end{bmatrix},
     \quad
   H =\begin{bmatrix}
       H_{\perp} \\
       H_{\parallel}
     \end{bmatrix},
     \quad
     E=\begin{bmatrix}
       E_{\perp} \\
       E_{\parallel}
     \end{bmatrix}.
  \end{equation*}
  Since $(f,g)$ is a solution for \eqref{Max main eq}, we see
  \begin{equation*}
    \partial_t\overline{A}\begin{bmatrix}
                          H \\
                          E
                        \end{bmatrix}
    +
    DB\overline{A}\begin{bmatrix}
                          H \\
                          E
                        \end{bmatrix}
    =
    \partial_t\overline{A}\begin{bmatrix}
                          H \\
                          E
                        \end{bmatrix}
    +
    D\underline{A}\begin{bmatrix}
                          H \\
                          E
                        \end{bmatrix}=0.
  \end{equation*}
  Thus
  \begin{equation}\label{Max second implies first eq1}
    \begin{cases}
      \partial_t\left(\mu_{\perp\perp}H_{\perp}+\mu_{\perp\parallel}H_{\parallel}\right)+
      \mathrm{div}_0\left(\mu_{\parallel\perp}H_{\perp}+\mu_{\parallel\parallel}H_{\parallel}\right)=0,\\

      \partial_tH_{\parallel}-\nabla H_{\perp}+
      i\omega J\left(\varepsilon_{\parallel\perp}E_{\perp}+\varepsilon_{\parallel\parallel}E_{\parallel}\right)=0,\\

      \partial_t\left(\varepsilon_{\perp\perp}E_{\perp}+\varepsilon_{\perp\parallel}E_{\parallel}\right)+
      \mathrm{div}\left(\varepsilon_{\parallel\perp}E_{\perp}+\varepsilon_{\parallel\parallel}E_{\parallel}\right)=0,\\

      \partial_tE_{\parallel}-\nabla_0 E_{\perp}-
      i\omega J\left(\mu_{\parallel\perp}H_{\perp}+\mu_{\parallel\parallel}H_{\parallel}\right)=0.
    \end{cases}
  \end{equation}
  By assumption, $(f,g)\in \mathbf{R}(D)$ for fixed $t\in \mathrm{R}$, and hance Proposition \ref{Max H equals RD} implies
  \begin{equation*}
    \begin{cases}
      \mathrm{curl}f_{\parallel}-i\omega g_{\perp}=0,\\
      \mathrm{curl}g_{\parallel}+i\omega f_{\perp}=0.
    \end{cases}
  \end{equation*}
  Therefore, in terms of $H$ and $E$, we can write
  \begin{equation}\label{Max second implies first eq2}
    \begin{cases}
      \mathrm{curl}H_{\parallel}-i\omega \left(\varepsilon_{\perp\perp}E_{\perp}+\varepsilon_{\perp\parallel}E_{\parallel}\right)=0,\\
      \mathrm{curl}E_{\parallel}+i\omega \left(\mu_{\perp\perp}H_{\perp}+\mu_{\perp\parallel}H_{\parallel}\right)=0.
    \end{cases}
  \end{equation}
  Combining \eqref{Max second implies first eq1} and \eqref{Max second implies first eq2}, we conclude that $H$, $E$ solve the system of equations \eqref{Maxwell's equation}.

Since $\overline{\mu}H=f$ and $f\in \mathbf{D}(DB)$ for each fixed $t\in \mathrm{R}$, it follows that
$$
\underline{\mu}H\in H^1(\Omega)\times H_{\mathrm{div}}^0(\Omega;\mathrm{C}^2),
$$
hence that
$$
\mu H\in L_2(\Omega)\times H_{\mathrm{div}}^0(\Omega;\mathrm{C}^2).
$$
Proposition \ref{Max H equals RD} and relation between $E$ and $g$ lead to $E_{\parallel}\in H_{\mathrm{curl}}^0(\Omega;\mathrm{C}^2)$, so that for any fixed $t\in \mathrm{R}$,
$$
E\in L_2(\Omega)\times H_{\mathrm{curl}}^0(\Omega;\mathrm{C}^2).
$$
\end{proof}

Let us define operators
\begin{equation*}\label{Max D0 D1}
  D_1:=\begin{bmatrix}
         0 & \text{div}_0 & 0 & 0 \\
         -\nabla & 0 & 0 & 0 \\
         0 & 0 & 0 & \text{div} \\
         0 & 0 & -\nabla_0 & 0
       \end{bmatrix},
       \qquad
       D_0:=\begin{bmatrix}
              0 & 0 & 0 & 0 \\
              0 & 0 & 0 & i\omega J \\
              0 & 0 & 0 & 0 \\
              0 & -i \omega J & 0 & 0
            \end{bmatrix}
\end{equation*}
with domains $\mathbf{D}(D)$ and $L_2(\Omega; \mathrm{C}^{n+2})$. Then
\begin{equation*}
  D=D_1+D_0.
\end{equation*}

\begin{rem}
Note that $D_1$ is a self-adjoint operator, see [\cite{KZ}, theorem 6.2], and $D_0$ is a bounded operator. Therefore $D$ is a closed operator and
\end{rem}
\begin{equation*}
  D^*=D_1^*+D_0^*=\begin{bmatrix}
         0 & \text{div}_0 & 0 & 0 \\
         -\nabla & 0 & 0 & \overline{i\omega} J\\
         0 & 0 & 0 & \text{div} \\
         0 & -\overline{i\omega} J & -\nabla_0 & 0
       \end{bmatrix}.
\end{equation*}

\subsection{Properties of D}
Here we prove that the operators defined in Sections \ref{Helmholtz subsection} and \ref{Maxwell subsection} have closed range and compact resolvent. We will use symbols $\sigma(\cdot)$ and $\rho(\cdot)$ to denote spectrum and resolvent set of an operator.

Let us start by considering operator $D$ defined in Sections \ref{Helmholtz subsection}. First, we prove that the range $\mathbf{R}(D)$ is closed.

\begin{prop}\label{Helm closed range}
Let $\Omega\subset \mathrm{R}^n$ be a bounded weakly Lipschitz domain and $D$ be the operator defined in Section \ref{Helmholtz subsection}. Then the range $\mathbf{R}(D)$ is a closed subspace of $L_2\left(\Omega; \mathrm{C}^{n+2}\right)$.
\end{prop}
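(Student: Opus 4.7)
The plan is to identify $\mathbf{R}(D)$ explicitly as a closed subspace of $L_2(\Omega;\mathrm{C}^{n+2})$, exploiting the triangular block structure of $D=D_1+D_0$. For $f=(f_1,f_2,f_3)\in\mathbf{D}(D)$ the image is
\begin{equation*}
Df = \bigl(\text{div}(f_2)+kf_3,\; -\nabla_0 f_1,\; -kf_1\bigr),
\end{equation*}
so the second and third components of $Df$ depend only on $f_1$, while the first depends only on $(f_2,f_3)$ and, because one may always take $f_2=0$ and $f_3=g_1/k$ to match any prescribed $g_1\in L_2(\Omega)$, imposes no constraint on the range.

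Using this, I would establish the explicit description
\begin{equation*}
\mathbf{R}(D) = \bigl\{(g_1,g_2,g_3)\in L_2(\Omega;\mathrm{C}^{n+2}) \,:\, g_3\in H^1_0(\Omega),\; \nabla_0 g_3 = k g_2\bigr\}.
\end{equation*}
The inclusion $\subset$ is immediate from the formula for $Df$: the identity $-kf_1=g_3$ with $f_1\in H^1_0(\Omega)$ forces $g_3\in H^1_0(\Omega)$, and then $-\nabla_0 f_1=g_2$ yields $\nabla_0 g_3 = kg_2$. For $\supset$, given $g$ in the right-hand set, the choices $f_1:=-g_3/k$, $f_2:=0$, $f_3:=g_1/k$ deliver $f\in\mathbf{D}(D)$ with $Df=g$ by direct verification.

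Closedness of the right-hand side then follows from the closedness of the operator $\nabla_0:H^1_0(\Omega)\to L_2(\Omega;\mathrm{C}^n)$: if $\{g^m\}$ lies in this set and $g^m\to g$ in $L_2$, then $g_3^m\to g_3$ in $L_2$ and $\nabla_0 g_3^m = kg_2^m \to kg_2$ in $L_2$, so the closed graph of $\nabla_0$ gives $g_3\in H^1_0(\Omega)$ and $\nabla_0 g_3 = kg_2$, whence $g$ belongs to the set.

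The essential simplification here is the hypothesis $k\neq 0$, which makes the lower-right entry of $D$ invertible on $\mathrm{C}$ and thereby decouples $f_1$ from $(f_2,f_3)$ in the range description; the only real input is the closed graph of $\nabla_0$, which is standard. Had one instead been dealing with the static case $k=0$, the question would reduce to whether $\nabla_0$ and $\text{div}$ have closed range on their respective domains, requiring a Poincar\'e-type inequality and forming the principal obstacle there; here that difficulty is entirely circumvented.
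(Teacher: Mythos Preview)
Your proof is correct and takes a genuinely different route from the paper. The paper establishes closedness analytically by proving the coercivity estimate $\|DD^*h\|^2\ge |k|^2\|D^*h\|^2$ via a direct expansion of $DD^*$; only afterwards, in a separate proposition, does it identify $\mathbf{R}(D)$ explicitly, and for the inclusion $\mathcal{H}\subset\mathbf{R}(D)$ it invokes the orthogonal decomposition $L_2(\Omega)=\mathbf{N}(\nabla_0)\oplus\overline{\mathbf{R}(\textrm{div})}$ together with the already-proved closedness. Your argument instead reverses the order: you identify the range explicitly first, using the clean preimage $f=(-g_3/k,\,0,\,g_1/k)$ (which is simpler than the paper's approximation argument), and then read off closedness from the closed graph of $\nabla_0$. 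Your approach is more elementary and self-contained, and in fact proves the paper's Propositions~2.3 and~2.4 simultaneously; the paper's approach, on the other hand, yields a quantitative lower bound on $D$ that could in principle feed into later resolvent estimates, though the paper does not seem to use it that way.
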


\begin{proof}
Let $h\in\mathbf{D}(D^*)$ and $D^*h\in\mathbf{D}(D)$. Then
\begin{equation}\label{Helm.closed range}
\|DD^*h\|^2=
\left\|\begin{matrix}
-\textrm{div}\nabla_0 h_1+|k|^2h_1\\
-\nabla_0(\textrm{div}h_2-\overline{k}h_3)\\
-k(\textrm{div}h_2-\overline{k}h_3)
\end{matrix}\right\|
\end{equation}
$$
=\|\textrm{div}\nabla_0 h_1\|^2+|k|^4\|h_1\|^2+2|k|^2\|\nabla_0 h_1\|^2
$$
$$
+\|\nabla_0(\textrm{div}h_2-\overline{k}h_3)\|^2+\|k(\textrm{div}h_2-\overline{k}h_3)\|^2
$$
$$
\geq |k|^2\|\overline{k}h_1\|^2+|k|^2\|\nabla_0 h_1\|^2+|k|^2\|\textrm{div}h_2-\overline{k}h_3\|^2=|k|^2\|D^*h\|^2.
$$
Since $k\neq0$, the rage $\mathbf{R}(D)$ is closed.
\end{proof}

To prove Proposition \ref{Helm closed range} we use that $k\neq0$, however, by applying Poincar\'e inequality, one can prove it also for $k=0$.

Next, we find the exact expression for the range $\mathbf{R}(D)$.

\begin{prop}
  Let $\Omega\subset \mathrm{R}^n$ be a bounded weakly Lipschitz domain and $D$ be the operator defined in Section \ref{Helmholtz subsection}, then $\mathbf{R}(D)=\mathcal{H}$, where
  \begin{equation*}
    \mathcal{H}:=\left\{f=(f_1,f_2,f_3)\in L_{2}(\Omega; \mathrm{C}^{2+n}): \; f_{3}\in H_0^1(\Omega), \; f_2=\frac{1}{k}\nabla_0 f_{3}\right\}.
  \end{equation*}
\end{prop}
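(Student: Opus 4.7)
The plan is to establish the equality by proving two inclusions directly, using the explicit action of $D$ on its domain. Recall from the definitions that for $f=(f_1,f_2,f_3)\in \mathbf{D}(D)$,
\begin{equation*}
Df = \begin{bmatrix} \mathrm{div}\, f_2 + k f_3 \\ -\nabla_0 f_1 \\ -k f_1 \end{bmatrix}.
\end{equation*}

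For the inclusion $\mathbf{R}(D)\subset\mathcal{H}$, I take an arbitrary $g=Df$ and read off the structural constraints forced by the matrix form of $D$. The third component is $g_3=-kf_1$, and since $f_1\in H^1_0(\Omega)$ and $k\neq 0$, this immediately gives $g_3\in H^1_0(\Omega)$ with $f_1=-g_3/k$. Substituting into the second component yields $g_2=-\nabla_0 f_1 = \tfrac{1}{k}\nabla_0 g_3$, which is precisely the defining relation of $\mathcal{H}$. No constraint on $g_1$ appears, again matching the definition of $\mathcal{H}$.

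For the reverse inclusion $\mathcal{H}\subset\mathbf{R}(D)$, given $g=(g_1,g_2,g_3)\in\mathcal{H}$ I construct a preimage explicitly: set
\begin{equation*}
f_1:=-g_3/k,\qquad f_2:=0,\qquad f_3:=g_1/k.
\end{equation*}
Since $g_3\in H^1_0(\Omega)$, we have $f_1\in H^1_0(\Omega)$; trivially $f_2\in H_{\mathrm{div}}(\Omega;\mathbb{C}^n)$ and $f_3\in L_2(\Omega)$, so $f\in\mathbf{D}(D)$. A direct computation gives $-kf_1=g_3$, $-\nabla_0 f_1=\tfrac{1}{k}\nabla_0 g_3=g_2$ (using the $\mathcal{H}$-condition), and $\mathrm{div}\,f_2+kf_3=g_1$. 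Hence $Df=g$.

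There is really no main obstacle here: the proof is a bookkeeping argument on the three components. The only subtle point worth emphasising is that the two conditions defining $\mathcal{H}$ are exactly what the lower two rows of the matrix $D$ impose on any element in its range, while the top row imposes no constraint because $f_3$ can be chosen freely in $L_2(\Omega)$ to hit any prescribed $g_1\in L_2(\Omega)$. The hypothesis $k\neq 0$ is used in both directions, consistent with the role it plays in Proposition \ref{Helm closed range}.
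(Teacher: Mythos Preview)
Your proof is correct, and in fact cleaner than the paper's own argument for the reverse inclusion. The forward inclusion $\mathbf{R}(D)\subset\mathcal{H}$ is handled identically (the paper simply says ``by definition of operator $D$'' without writing it out, but your computation is what is meant).

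For $\mathcal{H}\subset\mathbf{R}(D)$, however, the paper takes a different and more circuitous route. Rather than using the $kf_3$ term to hit an arbitrary $g_1$ directly, the paper invokes the orthogonal decomposition $L_2(\Omega)=\mathbf{N}(\nabla_0)\oplus\overline{\mathbf{R}(\mathrm{div})}$ to approximate $g_1$ by elements of the form $h+\mathrm{div}\,g^l$, builds a sequence of approximate preimages $(-g_3/k,\,g^l,\,h)$, and then appeals to Proposition~\ref{Helm closed range} (closedness of $\mathbf{R}(D)$) to pass to the limit. Your argument sidesteps all of this by exploiting the freedom in the third slot of the preimage: since $f_3$ ranges over all of $L_2(\Omega)$ and $k\neq 0$, the first component $\mathrm{div}\,f_2+kf_3$ already surjects onto $L_2(\Omega)$ with $f_2=0$. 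This is more elementary and does not require the closed-range result at all. The paper's approach is perhaps a residue of a more general template (where the zero-order coupling might be absent and one genuinely needs density of $\mathbf{R}(\mathrm{div})$), but in the present setting your explicit preimage is the sharper argument.
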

\begin{proof}
  By definition of operator $D$, we obtain that $\mathbf{R}(D)\subset\mathcal{H}$. Conversely, assume that $f=(f_1,f_2,f_3)\in \mathcal{H}$. Since
  \begin{equation*}
    L_2(\Omega)=\mathbf{N}(\nabla_0)\oplus \overline{\mathbf{R}(\textrm{div})},
  \end{equation*}
  there exist some function $h\in\mathbf{N}(\nabla_0)$ and a sequence $\{g^l\}_{l=1}^{\infty}\subset H_{\textrm{div}}(\Omega;\mathrm{C}^n)$ such that $h+\textrm{div}g^l\rightarrow f_1$ in $L_2$ norm. Therefore
  \begin{equation*}
    D\begin{bmatrix}
       -\frac{1}{k}f_3 \\
       g^l \\
       h
     \end{bmatrix}\rightarrow \begin{bmatrix}
                           f_1 \\
                           f_2 \\
                           f_3
                         \end{bmatrix}.
  \end{equation*}
  This, by Proposition \ref{Helm closed range}, implies that $f\in \mathbf{R}(D)$.
\end{proof}

Finally, we prove that the resolvent operators are compact, which implies that the spectrum $\sigma(\left.D\right|_{\mathbf{R}(D)})$ contains only the eigenvalues of $\left.D\right|_{\mathbf{R}(D)}$ and each eigenvalues has a finite geometric multiplicity.

\begin{prop}\label{Helm.compact resolvent}
Let $\Omega\subset \mathrm{R}^n$ be a bounded weakly Lipschitz domain and $D$ be the operator defined in Section \ref{Helmholtz subsection}. Assume $\lambda\in\rho(\left.D\right|_{\mathbf{R}(D)})$, then
$$
(\lambda-\left.D\right|_{\mathbf{R}(D)})^{-1}:\mathbf{R}(D)\rightarrow \mathbf{R}(D)
$$
is a compact operator.
\end{prop}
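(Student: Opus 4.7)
The plan is to show that the graph space of $D|_{\mathbf{R}(D)}$ embeds compactly into $(\mathbf{R}(D),\|\cdot\|_{L_2})$; together with boundedness of $(\lambda-D|_{\mathbf{R}(D)})^{-1}$ into the graph space, this yields compactness of the resolvent.

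First, since $\lambda\in\rho(D|_{\mathbf{R}(D)})$, the resolvent $R_\lambda:=(\lambda-D|_{\mathbf{R}(D)})^{-1}$ is bounded on $\mathbf{R}(D)$. If $f=R_\lambda g$ then $Df=\lambda f-g$, hence $\|Df\|_{L_2}\leq|\lambda|\,\|f\|_{L_2}+\|g\|_{L_2}$, so $R_\lambda$ maps $\mathbf{R}(D)$ boundedly into $\mathbf{D}(D)\cap\mathbf{R}(D)$ equipped with the graph norm $\|\cdot\|_{\mathrm{gr}}:=\|\cdot\|_{L_2}+\|D\cdot\|_{L_2}$. It therefore suffices to prove that the inclusion $(\mathbf{D}(D)\cap\mathbf{R}(D),\|\cdot\|_{\mathrm{gr}})\hookrightarrow L_2(\Omega;\mathrm{C}^{n+2})$ is compact.

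Second, I would read off regularity bounds from the graph norm. For $f=(f_1,f_2,f_3)\in\mathbf{D}(D)\cap\mathbf{R}(D)$, the preceding proposition characterising $\mathbf{R}(D)$ gives $f_3\in H^1_0(\Omega)$ and $f_2=k^{-1}\nabla_0 f_3$, while $\mathbf{D}(D)$ forces $f_1\in H^1_0(\Omega)$. Computing $Df=(\text{div}\,f_2+kf_3,-\nabla_0 f_1,-kf_1)$ yields
\begin{align*}
\|\nabla_0 f_1\|_{L_2}&\leq\|Df\|_{L_2},\\
\|\nabla_0 f_3\|_{L_2}&=|k|\,\|f_2\|_{L_2}\leq|k|\,\|f\|_{L_2},\\
\|\Delta f_3\|_{L_2}&=|k|\,\|\text{div}\,f_2\|_{L_2}\leq|k|\bigl(\|Df\|_{L_2}+|k|\,\|f\|_{L_2}\bigr).
\end{align*}

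Third, given a sequence $\{f^{(m)}\}$ bounded in the graph norm, Rellich--Kondrachov applied to $\{f^{(m)}_1\}$ and $\{f^{(m)}_3\}$ (bounded in $H^1_0(\Omega)$) yields a subsequence along which $f^{(m)}_1\to f_1$ and $f^{(m)}_3\to f_3$ in $L_2(\Omega)$. For the gradient component $f^{(m)}_2=k^{-1}\nabla_0 f^{(m)}_3$, I integrate by parts using $f^{(m)}_3-f^{(j)}_3\in H^1_0(\Omega)$:
$$
\|\nabla_0(f^{(m)}_3-f^{(j)}_3)\|_{L_2}^2=-\bigl(f^{(m)}_3-f^{(j)}_3,\,\Delta(f^{(m)}_3-f^{(j)}_3)\bigr)\leq C\,\|f^{(m)}_3-f^{(j)}_3\|_{L_2},
$$
where $C=2\sup_m\|\Delta f^{(m)}_3\|_{L_2}<\infty$ by the bound from step two. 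Hence $\nabla_0 f^{(m)}_3$ is Cauchy in $L_2$, and so $f^{(m)}_2\to k^{-1}\nabla_0 f_3$ in $L_2(\Omega;\mathrm{C}^n)$, giving $f^{(m)}\to(f_1,k^{-1}\nabla_0 f_3,f_3)$ in $L_2(\Omega;\mathrm{C}^{n+2})$.

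The main obstacle is the third step: going beyond the Rellich embedding $H^1_0\hookrightarrow L_2$ to extract strong $L_2$ convergence of the gradient $\nabla_0 f^{(m)}_3$. The key point is that the zero boundary condition $f_3\in H^1_0(\Omega)$ built into $\mathbf{R}(D)$ legitimises an integration-by-parts identity that reduces gradient convergence to an $L_2$-Cauchy estimate; in particular no elliptic regularity beyond $L_2$ is invoked, which keeps the argument valid for arbitrary weakly Lipschitz $\Omega$.
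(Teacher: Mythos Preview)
Your argument is correct, and the overall strategy (reduce to compactness of the graph-norm embedding, then extract Cauchy subsequences component by component) matches the paper. The treatment of $f_1$ and $f_3$ via Rellich--Kondrachov is identical.

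The genuine difference is in how you handle the gradient component $f_2=k^{-1}\nabla_0 f_3$. The paper observes that $\nabla_0 f_3$ lies in $H^0_{\mathrm{curl}}(\Omega;\mathrm{C}^n)\cap H_{\mathrm{div}}(\Omega;\mathrm{C}^n)$ (since $\mathrm{curl}\,\nabla_0 f_3=0$ and $\mathrm{div}\,\nabla_0 f_3$ is bounded in $L_2$), and then invokes the compact embedding of this intersection into $L_2$, citing Picard and Axelsson--McIntosh. Your route instead uses the elementary interpolation
\[
\|\nabla_0 u\|_{L_2}^2=-(u,\Delta u)\le \|u\|_{L_2}\,\|\Delta u\|_{L_2},
\]
valid because $u\in H^1_0(\Omega)$ and $\nabla_0 u\in H_{\mathrm{div}}(\Omega;\mathrm{C}^n)$ make $-\mathrm{div}$ and $\nabla_0$ an adjoint pair. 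This is more self-contained: it replaces a nontrivial compact-embedding theorem by a one-line integration by parts, at the price of being specific to gradients (the paper's embedding works for general vector fields with controlled $\mathrm{div}$ and $\mathrm{curl}$, which is why the same citation reappears in the Maxwell case). For the Helmholtz proposition as stated, your argument is the simpler one.
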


\begin{proof}
Since $\mathbf{R}(D)\subset L_2(\Omega;\mathrm{C}^n)$ is closed and the operator $D(\lambda-D)^{-1}$ is closed and defined on whole $L_2(\Omega;\mathrm{C}^n)$, we see that
\begin{equation*}
  \left.D\right|_{\mathbf{R}(D)}(\lambda-\left.D\right|_{\mathbf{R}(D)})^{-1}:\mathbf{R}(D)\rightarrow \mathbf{R}(D)
\end{equation*}
is a bounded operator. Therefore, it is suffices to show that the embedding
$$
\left(\mathbf{D}(D)\cap \mathbf{R}(D),\|\cdot\|_{\mathbf{D}(D)\cap \mathbf{R}(D)}\right) \hookrightarrow\left(\mathbf{R}(D),\|\cdot\|_{L_2}\right)
$$
is compact, where
$$
\|f\|_{\mathbf{D}(D)\cap \mathbf{R}(D)}=\|Df\|+\|f\|.
$$

Let $\{(f^l,\nabla g^l,kg^l)\}_{l=1}^{+\infty}$ be a sequence in $\left(\mathbf{D}(D)\cap \mathbf{R}(D),\|\cdot\|_{\mathbf{D}(D)\cap \mathbf{R}(D)}\right)$ such that
\begin{equation}\label{Helm.compact resolvent1}
  \left\|\begin{matrix}
      f^l \\
      \nabla_0 g^l \\
      kg^l
    \end{matrix}\right\|+
    \left\|D\begin{bmatrix}
      f^l \\
      \nabla_0 g^l \\
      kg^l
    \end{bmatrix}\right\|<C
\end{equation}
for some $C>0$. In particular, we get
\begin{equation*}
  \|f\|+\|\nabla_0 f\|\leq C.
\end{equation*}
Therefore, the sequence $\{f^l\}_{l=1}^{\infty}$ is bounded in $H^1(\Omega)$. Since $\Omega\subset\mathrm{R}^n$ is bounded, the Sobolev Embedding Theorem gives that $H^1(\Omega)\hookrightarrow L_2(\Omega)$ is compact. Hence, the sequence $\{f^l\}_{l=1}^{\infty}$ contains a Cauchy subsequence in $L_2(\Omega)$. The same conclusion can be drawn for $\{g^l\}_{l=1}^{\infty}$.

From estimate \eqref{Helm.compact resolvent1}, we obtain
\begin{equation*}
  \|\mathrm{div}\nabla_0 g^l+k^2g^l\|+\|g^l\|\leq C,
\end{equation*}
hence that $\|\mathrm{div}\nabla_0 g^l\|\leq C$. Next, since $\{g^l\}_{l=1}^{\infty}\subset H^1_0(\Omega)$ and $\mathrm{curl}\nabla_0 g^l=0$,
we see that $\{\nabla_0g^l\}_{l=1}^{\infty}$ is a bounded sequence in $H^0_{\mathrm{curl}}(\Omega;\mathrm{C}^n)\cap H_{\mathrm{div}}(\Omega;\mathrm{C}^n)$. Consequently, the sequence $\{\nabla_0g^l\}_{l=1}^{\infty}$ contains a Cauchy subsequence in $L_2(\Omega; \mathrm{C}^n)$, because the embedding
\begin{equation*}
  H^0_{\mathrm{curl}}(\Omega;\mathrm{C}^n)\cap H_{\mathrm{div}}(\Omega;\mathrm{C}^n)\hookrightarrow L_2(\Omega; \mathrm{C}^n)
\end{equation*}
is compact, see \cite{P} or \cite{AxelssonMcIntosh}.

Finally, after several renumbering, we conclude that $\{(f^l,\nabla_0 g^l,kg^l)\}_{l=1}^{+\infty}$ contains a Cauchy subsequence in $\left(\mathbf{R}(D),\|\cdot\|_{L_2}\right)$.
\end{proof}

Further on, we prove similar results, but for the operator $D$ defined in Section \ref{Maxwell subsection}.

\begin{prop}\label{Max.closed range}
Let $\Omega\subset \mathrm{R}^2$ be a bounded weakly Lipschitz domain and $D$ be the operator defined in Section \ref{Maxwell subsection}. Then the range $\mathbf{R}(D)$ is a closed subspace of $L_2(\Omega; \mathrm{C}^6)$.
\end{prop}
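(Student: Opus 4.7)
The plan is to parallel the proof of Proposition \ref{Helm closed range}: establish a coercivity estimate $\|DD^*h\|^2 \geq c\,\|D^*h\|^2$ on the natural domain of $DD^*$, from which closedness of $\mathbf{R}(D)$ follows by the same reasoning as in that proposition.

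Writing $h = (h_1, h_2, h_3, h_4)$ and applying the formula for $D^*$ from the preceding remark gives
\begin{equation*}
D^*h = \bigl(\mathrm{div}_0 h_2,\; -\nabla h_1 + \overline{i\omega}Jh_4,\; \mathrm{div}\,h_4,\; -\overline{i\omega}Jh_2 - \nabla_0 h_3\bigr)^T.
\end{equation*}
Applying $D$ once more and exploiting $J^2 = -I$, the off-diagonal $\pm i\omega J$ entries of $D$ couple with the $\mp \overline{i\omega}J$ entries of $D^*$ to produce pure $|\omega|^2 h_j$ contributions in the second and fourth components:
\begin{align*}
(DD^*h)_1 &= \mathrm{div}_0(-\nabla h_1 + \overline{i\omega}Jh_4),\\
(DD^*h)_2 &= -\nabla(\mathrm{div}_0 h_2) + |\omega|^2 h_2 - i\omega J\nabla_0 h_3,\\
(DD^*h)_3 &= \mathrm{div}(-\overline{i\omega}Jh_2 - \nabla_0 h_3),\\
(DD^*h)_4 &= i\omega J\nabla h_1 + |\omega|^2 h_4 - \nabla_0(\mathrm{div}\,h_4).
\end{align*}

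I then expand $\|DD^*h\|^2$ as a sum of four squared $L_2$-norms and apply integration by parts to the resulting cross terms. The tangential condition $h_2 \in H^0_{\mathrm{div}}$ gives $\mathrm{Re}\langle -\nabla(\mathrm{div}_0 h_2), h_2\rangle = \|\mathrm{div}_0 h_2\|^2$, and the Dirichlet condition $h_3 \in H^1_0$ gives the analogous identity between $\nabla_0(\mathrm{div}\,h_4)$ and $h_4$; these play the role of the Helmholtz identity $-2\mathrm{Re}\langle \mathrm{div}\nabla_0 h_1, |k|^2 h_1\rangle = 2|k|^2\|\nabla_0 h_1\|^2$. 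Further cross terms between pure gradient and $J$-rotated fields drop out via the identity $\mathrm{div}(Jf) = -\mathrm{curl}\,f$ from Remark \ref{WHdiv equals Hcurl} combined with $\mathrm{curl}\nabla = 0$. The mixed cross terms coupling rows $2$ and $4$ that do not outright vanish are absorbed into the diagonal $|\omega|^4\|h_2\|^2$ and $|\omega|^4\|h_4\|^2$ contributions by Cauchy--Schwarz. Collecting the surviving positive terms yields $\|DD^*h\|^2 \gtrsim |\omega|^2 \|D^*h\|^2$, and since $\omega \neq 0$ the range $\mathbf{R}(D)$ is closed.

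The main obstacle is the bookkeeping of cross terms. Unlike the Helmholtz case, where each row of $DD^*h$ has at most two summands, rows $2$ and $4$ here have three summands each, producing six cross terms per row. The non-vanishing mixed products between gradient and $J$-rotated gradient fields (such as $\mathrm{Re}\langle h_2, i\omega J\nabla_0 h_3\rangle$) must be absorbed carefully against the diagonal terms so that enough positivity remains on the right-hand side of the estimate.
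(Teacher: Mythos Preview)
Your overall strategy matches the paper's proof exactly: compute $DD^*h$, expand $\|DD^*h\|^2$, and show $\|DD^*h\|^2\ge|\omega|^2\|D^*h\|^2$. Your formulas for $D^*h$ and $DD^*h$ are correct, and you correctly identify that the cross terms $\langle\nabla\mathrm{div}_0 h_2,\,i\omega J\nabla_0 h_3\rangle$ and $\langle\nabla_0\mathrm{div}\,h_4,\,i\omega J\nabla h_1\rangle$ vanish via $\mathrm{curl}\,\nabla=0$, and that the cross terms with $|\omega|^2 h_j$ and $\nabla\mathrm{div}\,h_j$ yield $2|\omega|^2\|\mathrm{div}\,h_j\|^2$ by integration by parts.

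The gap is in your treatment of the remaining cross terms, e.g.\ $2|\omega|^2\,\mathrm{Re}\langle h_2,\,-i\omega J\nabla_0 h_3\rangle$. Absorbing this by Cauchy--Schwarz against the diagonal terms $|\omega|^4\|h_2\|^2$ and $|\omega|^2\|\nabla_0 h_3\|^2$ will, at best, leave you with $\ge 0$ for that block: the inequality is sharp here, and you lose \emph{both} diagonal terms. After doing this in rows $2$ and $4$ you are left only with $\|\nabla\mathrm{div}_0 h_2\|^2+2|\omega|^2\|\mathrm{div}_0 h_2\|^2$ and the analogous row-$4$ terms, which do not control the second and fourth components of $D^*h$ at all. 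So the coercivity estimate fails.

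The paper does not absorb these cross terms; it \emph{regroups} them. Since $J$ is orthogonal, $|\omega|^4\|h_2\|^2=|\omega|^2\|\overline{i\omega}Jh_2\|^2$, and the cross term is precisely the missing inner product in the expansion
\[
|\omega|^2\|\nabla_0 h_3\|^2+|\omega|^2\|\overline{i\omega}Jh_2\|^2
+2|\omega|^2\,\mathrm{Re}\langle -\nabla_0 h_3,\,-\overline{i\omega}Jh_2\rangle
=|\omega|^2\|(D^*h)_4\|^2.
\]
The same regrouping in row $4$ produces $|\omega|^2\|(D^*h)_2\|^2$, and the surplus $2|\omega|^2\|\mathrm{div}_0 h_2\|^2$, $2|\omega|^2\|\mathrm{div}\,h_4\|^2$ cover $|\omega|^2\|(D^*h)_1\|^2$ and $|\omega|^2\|(D^*h)_3\|^2$. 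This gives the exact inequality $\|DD^*h\|^2\ge|\omega|^2\|D^*h\|^2$ with no Cauchy--Schwarz needed. Replace your absorption step with this regrouping and the proof goes through.
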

\begin{proof}
  Let $h\in \mathbf{D}(D^*)$ and $D^*h\in\mathbf{D}(D)$. Then
  \begin{equation*}
    \|DD^*h\|^2=\left\|D\begin{bmatrix}
                          \text{div}_0h_2 \\
                          -\nabla h_1+\overline{i\omega}Jh_4\\
                          \text{div}h_4 \\
                          -\nabla_0 h_3-\overline{i\omega}Jh_2
                        \end{bmatrix}\right\|^2=\left\| \begin{matrix}
                                                          -\text{div}_0\nabla h_1+\overline{i\omega}\text{div}_0Jh_4 \\
                                                          -\nabla\text{div}_0h_2-i\omega J\nabla_0 h_3+|\omega|^2h_2 \\
                                                          -\text{div}\nabla_0 h_3-\overline{i\omega}\text{div}Jh_2 \\
                                                          -\nabla_0\text{div}h_4+i\omega J\nabla h_1+|\omega|^2h_4
                                                        \end{matrix}\right\|^2
  \end{equation*}
  \begin{equation*}
    \geq\|-\nabla\text{div}_0h_2-i\omega J\nabla_0 h_3+|\omega|^2h_2\|^2+\|-\nabla_0\text{div}h_4+i\omega J\nabla h_1+|\omega|^2h_4\|^2
  \end{equation*}
  \begin{equation*}
    =\|\nabla\text{div}_0h_2\|^2+\|\omega J\nabla_0 h_3\|^2+|\omega|^4\|h_2\|^2
  \end{equation*}
  \begin{equation*}
    +2\text{Re}(\nabla\text{div}_0h_2,i\omega J\nabla_0 h_3)+2\text{Re}(-\nabla\text{div}_0h_2,|\omega|^2h_2)+2\text{Re}(-i\omega J\nabla_0 h_3,|\omega|^2h_2)
  \end{equation*}
  \begin{equation*}
    +\|\nabla_0\text{div}h_4\|^2+\|\omega J\nabla h_1\|^2+|\omega|^4\|h_4\|^2
  \end{equation*}
  \begin{equation*}
    +2\text{Re}(-\nabla_0\text{div}h_4,i\omega J\nabla h_1)+2\text{Re}(-\nabla_0\text{div}h_4,|\omega|^2h_4)+2\text{Re}(i\omega J\nabla h_1,|\omega|^2h_4)
  \end{equation*}
\begin{equation*}
  =\|\nabla\text{div}_0h_2\|^2+|\omega|^2\|\nabla_0 h_3\|^2+|\omega|^2\|\overline{\omega}Jh_2\|^2
\end{equation*}
\begin{equation*}
  +0+2|\omega|^2\|\text{div}_0h_2\|^2+2|\omega|^2\text{Re}(-\nabla_0 h_3,-\overline{i\omega}Jh_2)
\end{equation*}
\begin{equation*}
  +\|\nabla_0\text{div}h_4\|^2+|\omega|^2\|\nabla h_1\|^2+|\omega|^2\|\overline{\omega}Jh_4\|^2
\end{equation*}
\begin{equation*}
  +0+2|\omega|^2\|\text{div}h_4\|^2+2|\omega|^2\text{Re}(\nabla h_1,-\overline{i\omega}Jh_4).
\end{equation*}
  Thus
  \begin{equation*}
    \|DD^*h\|^2\geq \|\nabla\text{div}_0h_2\|^2+\|\nabla_0\text{div}h_4\|^2+|\omega|^2\|D^*h\|^2\geq |\omega|^2\|D^*h\|^2.
  \end{equation*}
Since $\omega\neq0$, the rage $\mathbf{R}(D)$ is closed.
\end{proof}

The following proposition gives the exact expression for the range $\mathbf{R}(D)$.

\begin{prop}\label{Max H equals RD}
  Let $\Omega\subset \mathrm{R}^2$ be a bounded weakly Lipschitz domain and $D$ be the operator defined in Section \ref{Maxwell subsection}, then $\mathbf{R}(D)=\mathcal{H}$, where
  \begin{equation*}
    \mathcal{H}:=\left\{(f_{\perp},f_{\parallel},g_{\perp},g_{\parallel})\in L_2(\Omega; \mathrm{C}^6): \;
    f_{\parallel}\in H_{\mathrm{curl}}(\Omega;\mathrm{C}^2), \; g_{\parallel}\in H_{\mathrm{curl}}^0(\Omega;\mathrm{C}^2)\right.
  \end{equation*}
  \begin{equation*}
    \left.\mathrm{and} \; \mathrm{curl}f_{\parallel}-i\omega g_{\perp}=0, \; \mathrm{curl}g_{\parallel}+i\omega f_{\perp}=0\right\}.
  \end{equation*}
\end{prop}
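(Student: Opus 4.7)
My plan is to establish the two set inclusions directly, using the identity $\mathrm{curl}(Jf) = \mathrm{div}\, f$ from Remark \ref{WHdiv equals Hcurl} together with its dual $\mathrm{div}(Jf) = -\mathrm{curl}\, f$, which follows by replacing $f$ with $-Jf$ and using $J^2 = -I$. The set $\mathcal{H}$ is engineered so that explicit preimages for $D$ can be obtained simply by multiplying $f_\parallel$ and $g_\parallel$ by $J$.

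For $\mathbf{R}(D) \subseteq \mathcal{H}$, I apply $D$ to an arbitrary $(h_1,h_2,h_3,h_4) \in \mathbf{D}(D)$, producing
\[
(f_\perp, f_\parallel, g_\perp, g_\parallel) = (\mathrm{div}_0 h_2,\; -\nabla h_1 + i\omega J h_4,\; \mathrm{div}\, h_4,\; -i\omega J h_2 - \nabla_0 h_3).
\]
Using $\mathrm{curl}\,\nabla = 0$ and the two identities above, I obtain $\mathrm{curl}\, f_\parallel = i\omega\, \mathrm{div}\, h_4 = i\omega\, g_\perp$ and $\mathrm{curl}\, g_\parallel = -i\omega\, \mathrm{div}_0 h_2 = -i\omega\, f_\perp$, which are the two algebraic constraints defining $\mathcal{H}$. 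Membership $f_\parallel \in H_{\mathrm{curl}}$ is then clear. For $g_\parallel \in H^0_{\mathrm{curl}}$, the term $J h_2$ lies in $H^0_{\mathrm{curl}}$ by Remark \ref{WHdiv equals Hcurl}, and $\nabla_0 h_3$ lies in $H^0_{\mathrm{curl}}$ because the zero extension of $h_3 \in H^1_0$ belongs to $H^1(\mathrm{R}^2)$, making the zero extension of $\nabla_0 h_3$ the gradient of an $H^1$ function and hence curl-free on $\mathrm{R}^2$.

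For the reverse inclusion $\mathcal{H} \subseteq \mathbf{R}(D)$, given $(f_\perp, f_\parallel, g_\perp, g_\parallel) \in \mathcal{H}$, I would exhibit the explicit preimage
\[
h_1 := 0, \qquad h_2 := -\tfrac{i}{\omega} J g_\parallel, \qquad h_3 := 0, \qquad h_4 := \tfrac{i}{\omega} J f_\parallel.
\]
Since $f_\parallel \in H_{\mathrm{curl}}$ and $g_\parallel \in H^0_{\mathrm{curl}}$, Remark \ref{WHdiv equals Hcurl} (together with $J^2 = -I$) gives $h_4 \in H_{\mathrm{div}}$ and $h_2 \in H^0_{\mathrm{div}}$, so $(h_1,h_2,h_3,h_4) \in \mathbf{D}(D)$. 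The identities $i\omega J h_4 = i^2 J^2 f_\parallel = f_\parallel$ and $-i\omega J h_2 = g_\parallel$ recover the two middle components of $D(h_1,h_2,h_3,h_4)$. For the outer components, I invoke the algebraic constraints of $\mathcal{H}$: $\mathrm{div}\, h_4 = -\tfrac{i}{\omega}\,\mathrm{curl}\, f_\parallel = g_\perp$ and $\mathrm{div}_0 h_2 = \tfrac{i}{\omega}\,\mathrm{curl}\, g_\parallel = f_\perp$.

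The main subtle point will be verifying that $h_2$ belongs to $H^0_{\mathrm{div}}$ rather than merely $H_{\mathrm{div}}$; this is where the boundary condition $g_\parallel \in H^0_{\mathrm{curl}}$ built into the definition of $\mathcal{H}$ is essential, and it reflects the asymmetric choice of function spaces (namely $\mathrm{div}_0$ versus $\mathrm{div}$, and $\nabla_0$ versus $\nabla$) in the definition of $D$. No appeal to Proposition \ref{Max.closed range} is needed, since the construction is fully explicit.
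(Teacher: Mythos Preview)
Your proof is correct and follows essentially the same strategy as the paper for the inclusion $\mathbf{R}(D)\subseteq\mathcal{H}$. For the reverse inclusion your argument is a slight streamlining of the paper's: the paper also defines the tangential components $F_\parallel,G_\parallel$ as $\tfrac{1}{i\omega}J$ applied to $g_\parallel,f_\parallel$, but then argues that the residual $f_\parallel-i\omega J G_\parallel$ is curl-free and invokes the existence of a scalar potential $F_\perp\in H^1(\Omega)$ (and analogously $G_\perp\in H^1_0(\Omega)$). You instead choose the sign of $h_4$ so that $i\omega J h_4=f_\parallel$ exactly, which lets you take $h_1=h_3=0$ and bypasses any appeal to ``curl-free implies gradient''. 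This is a genuine simplification: it avoids an implicit topological hypothesis on $\Omega$ (needed to guarantee that closed forms are exact) and makes the preimage completely explicit, at no cost.
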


\begin{proof}
  Assume $(f,g)\in \mathbf{R}(D)$, then there exists $(F,G)\in \mathbf{D}(D)$ such that
  \begin{equation*}
    \begin{bmatrix}
      f_{\perp} \\
      f_{\parallel} \\
      g_{\perp} \\
      g_{\parallel}
    \end{bmatrix}
    =
    \begin{bmatrix}
      \mathrm{div}_0 F_{\parallel} \\
      -\nabla F_{\perp}+i\omega JG_{\parallel} \\
      \mathrm{div} G_{\parallel} \\
      -\nabla_0 G_{\perp}-i\omega JF_{\parallel}
    \end{bmatrix}.
  \end{equation*}
  Since $F_{\parallel}\in H^0_{\mathrm{div}}(\Omega;\mathrm{C}^2)$, $G_{\parallel}\in H_{\mathrm{div}}(\Omega;\mathrm{C}^2)$, we see $f_{\perp}$, $g_{\perp}\in L_2(\Omega)$. From Remark \ref{WHdiv equals Hcurl}, we conclude $JF_{\parallel}\in H^0_{\mathrm{curl}}(\Omega;\mathrm{C}^2)$ and $JG_{\parallel}\in H_{\mathrm{curl}}(\Omega;\mathrm{C}^2)$. Therefore, since
  $F_{\perp}\in H^1(\Omega)$ and $G_{\perp}\in H^1_0(\Omega)$, we obtain $f_{\parallel}\in H_{\mathrm{curl}}(\Omega;\mathrm{C}^2)$ and $g\in H^0_{\mathrm{curl}}(\Omega;\mathrm{C}^2)$.

  Next, we compute
  \begin{equation*}
    \mathrm{curl}f_{\parallel}=-\mathrm{curl}\nabla F_{\perp}+i\omega\mathrm{curl}JG_{\parallel}=
    i\omega\mathrm{div}G_{\parallel}=i\omega g_{\perp}
  \end{equation*}
  and similarly
  \begin{equation*}
    \mathrm{curl}g_{\parallel}=-i\omega f_{\perp}.
  \end{equation*}
  From the arguments above, we can assert that $\mathbf{R}(D)\subset \mathcal{H}$.

  Conversely, assume $(f,g)\in \mathcal{H}$. Let us set
  \begin{equation*}
    F_{\parallel}=\frac{1}{i\omega}Jg_{\parallel},
    \qquad
    G_{\parallel}=\frac{1}{i\omega}Jf_{\parallel}.
  \end{equation*}
  Then, from Remark \ref{WHdiv equals Hcurl}, we obtain
  \begin{equation*}
    F_{\parallel}\in H_{\mathrm{div}}^0(\Omega;\mathrm{C}^2),
    \qquad
    G_{\parallel}\in H_{\mathrm{div}}(\Omega;\mathrm{C}^2)
  \end{equation*}
  and
  \begin{equation*}
    f_{\perp}=\mathrm{div}_0 F_{\parallel},
    \qquad
    g_{\perp}=\mathrm{div} G_{\parallel}.
  \end{equation*}

  Next, since
  \begin{equation*}
    \mathrm{curl}\left(f_{\parallel}-i\omega J G_{\parallel}\right)=\mathrm{curl}f_{\parallel}-i\omega\mathrm{div} G_{\parallel}=
    \mathrm{curl}f_{\parallel}-i\omega g_{\perp}=0
  \end{equation*}
  and $f_{\parallel}-i\omega J G_{\parallel}\in H_{\mathrm{curl}}(\Omega;\mathrm{C}^2)$, there exists a function $F_{\perp}\in H^1(\Omega)$ such that $-\nabla F_{\perp}=f_{\parallel}-i\omega J G_{\parallel}$.

  Likewise, since $\mathrm{curl}\left(g_{\parallel}+i\omega J F_{\parallel}\right)=0$ and $g_{\parallel}+i\omega J F_{\parallel}\in H_{\mathrm{curl}}^0(\Omega;\mathrm{C}^2)$, there exists a function $G_{\perp}\in H^1_0(\Omega)$ such that $-\nabla_0G_{\perp}=g_{\parallel}+i\omega J F_{\parallel}$.

  Combining all relations between $(f,g)$ and $(F,G)$, we get $(F,G)\in \mathbf{D}(D)$ and
  \begin{equation*}
    D\begin{bmatrix}
       F \\
       G
     \end{bmatrix}
     =
     \begin{bmatrix}
       f \\
       g
     \end{bmatrix}.
  \end{equation*}
  This implies that $\mathcal{H}\subset \mathbf{R}(D)$, hence that $\mathcal{H}=\mathbf{R}(D)$.
\end{proof}

There is also the following analogue of Proposition \ref{Helm.compact resolvent}.

\begin{prop}\label{Max.compact resolvent}
Let $\Omega\subset \mathrm{R}^2$ be a bounded weakly Lipschitz domain and $D$ be the operator defined in section \ref{Maxwell subsection}. Assume $\lambda\in\rho(\left.D\right|_{\mathbf{R}(D)})$, then
$$
(\lambda-\left.D\right|_{\mathbf{R}(D)})^{-1}:\mathbf{R}(D)\rightarrow \mathbf{R}(D)
$$
is a compact operator.
\end{prop}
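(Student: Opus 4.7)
The plan is to mimic the proof of Proposition \ref{Helm.compact resolvent}. First I would reduce the compactness of the resolvent to compactness of the embedding
\begin{equation*}
\left(\mathbf{D}(D)\cap \mathbf{R}(D),\|\cdot\|_{\mathbf{D}(D)\cap \mathbf{R}(D)}\right) \hookrightarrow \left(\mathbf{R}(D),\|\cdot\|_{L_2}\right),
\end{equation*}
where $\|f\|_{\mathbf{D}(D)\cap \mathbf{R}(D)}=\|Df\|+\|f\|$. The reduction uses that $\mathbf{R}(D)$ is closed by Proposition \ref{Max.closed range}, together with the closed graph theorem giving boundedness of $\left.D\right|_{\mathbf{R}(D)}(\lambda - \left.D\right|_{\mathbf{R}(D)})^{-1}$ on $\mathbf{R}(D)$.

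Next, I would take a sequence $\{(f_\perp^l, f_\parallel^l, g_\perp^l, g_\parallel^l)\}$ in $\mathbf{D}(D)\cap \mathbf{R}(D)$ bounded in the graph norm and extract a Cauchy subsequence in $L_2$, treating the tangential and normal components separately. For the $\parallel$-components, the bound on $Df^l$ controls $\|\text{div}_0 f_\parallel^l\|$ and $\|\text{div}\, g_\parallel^l\|$, while the range characterization from Proposition \ref{Max H equals RD} gives $\text{curl}\, f_\parallel^l = i\omega g_\perp^l$ and $\text{curl}\, g_\parallel^l = -i\omega f_\perp^l$, which are bounded in $L_2$. Hence $\{f_\parallel^l\}$ is bounded in $H_{\text{curl}}(\Omega;\mathrm{C}^2)\cap H^0_{\text{div}}(\Omega;\mathrm{C}^2)$ and $\{g_\parallel^l\}$ is bounded in $H^0_{\text{curl}}(\Omega;\mathrm{C}^2)\cap H_{\text{div}}(\Omega;\mathrm{C}^2)$. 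The compactness of these embeddings into $L_2(\Omega;\mathrm{C}^2)$, cited from \cite{P} or \cite{AxelssonMcIntosh} as in the Helmholtz case, yields Cauchy subsequences.

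For the $\perp$-components, $f_\perp^l \in H^1(\Omega)$ and $g_\perp^l \in H^1_0(\Omega)$ hold by the definition of $\mathbf{D}(D)$. The second and fourth rows of $Df^l$ give
\begin{equation*}
\|-\nabla f_\perp^l + i\omega J g_\parallel^l\| \leq C, \qquad \|-\nabla_0 g_\perp^l - i\omega J f_\parallel^l\| \leq C,
\end{equation*}
so combined with the $L_2$ bounds on $f_\parallel^l$ and $g_\parallel^l$, the gradients $\nabla f_\perp^l$ and $\nabla_0 g_\perp^l$ are bounded in $L_2$. Therefore $\{f_\perp^l\}$ is bounded in $H^1(\Omega)$ and $\{g_\perp^l\}$ in $H^1_0(\Omega)$, and Rellich's theorem supplies Cauchy subsequences in $L_2(\Omega)$. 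After finitely many passages to subsequences, I obtain a Cauchy sequence in $(\mathbf{R}(D),\|\cdot\|_{L_2})$.

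The main technical point I expect is the entanglement of components: the $L_2$ bounds on $\nabla f_\perp^l$ and $\nabla_0 g_\perp^l$ are not immediate from $\|Df^l\|$ alone; they require first producing the $L_2$ compactness of $g_\parallel^l$ and $f_\parallel^l$ via the $H_{\text{curl}}\cap H_{\text{div}}$-type compact embedding, and only then subtracting those off to control the gradients of the $\perp$-components. Getting the order of these extractions right, and verifying that the constraints encoded in $\mathbf{R}(D)$ upgrade the $\parallel$-components from merely $H^0_{\text{div}}$ or $H_{\text{div}}$ to the full $H_{\text{curl}}\cap H^0_{\text{div}}$ (or dual) scale where compactness is available, is the crux.
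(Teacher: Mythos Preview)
Your proposal is correct and lands on the same compact embeddings as the paper, but the bookkeeping differs. The paper parametrizes elements of $\mathbf{D}(D)\cap\mathbf{R}(D)$ as $Dh^l$ with $h^l\in\mathbf{D}(D)$ and $Dh^l\in\mathbf{D}(D)$, and then recovers the needed curl bounds by direct computation from this representation (for instance $\mathrm{curl}(-\nabla_0 h_3^l - i\omega J h_2^l) = -i\omega\,\mathrm{div}_0 h_2^l$), and the $H^1$ bounds on the scalar components by reading off $\nabla\,\mathrm{div}_0 h_2^l$ from a row of $DDh^l$. You instead work directly with the tuple $(f_\perp^l, f_\parallel^l, g_\perp^l, g_\parallel^l)$ and import the curl identities from the range characterization in Proposition~\ref{Max H equals RD}; this is a bit cleaner and more modular, since that proposition has already done the relevant computation. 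One small overstatement in your last paragraph: the ``entanglement'' you anticipate is not really present, because the $L_2$ bounds on $f_\parallel^l$ and $g_\parallel^l$ needed to isolate $\nabla f_\perp^l$ and $\nabla_0 g_\perp^l$ come straight from $\|f^l\|\le C$, with no need to first pass through the $H_{\mathrm{curl}}\cap H_{\mathrm{div}}$ compactness.
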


\begin{proof}
As in Proposition \ref{Helm.compact resolvent}, we see that
\begin{equation*}
  \left.D\right|_{\mathbf{R}(D)}(\lambda-\left.D\right|_{\mathbf{R}(D)})^{-1}:\mathbf{R}(D)\rightarrow \mathbf{R}(D)
\end{equation*}
is a bounded operator. Therefore it remains to verify that the embedding
$$
\left(\mathbf{D}(D)\cap \mathbf{R}(D),\|\cdot\|_{\mathbf{R}(D)}\right)\hookrightarrow\left(\mathbf{R}(D),\|\cdot\|_{L_2}\right).
$$
is compact

  Let $\{h^l\}_{l=1}^{\infty}\subset \mathbf{D}(D)$ be a sequence such that
  $\{Dh^l\}_{l=1}^{\infty}\subset\mathbf{D}(D)\cap \mathbf{R}(D)$ and
  \begin{equation}\label{Max.compact resolvent1}
    \|Dh^l\|+\|DDh^l\|<C
  \end{equation}
  for some constant $C>0$. In particular
  \begin{equation*}
    \|\mathrm{div}_0h_2^l\|+\|-\nabla\text{div}_0 h_2^l+i\omega J(-\nabla_0 h^l_3-i\omega Jh_2^l)\|<C,
  \end{equation*}
  \begin{equation*}
    \|-\nabla_0 h^l_3-i\omega Jh_2^l\|<C.
  \end{equation*}
  Therefore
  \begin{equation}\label{Max.compact resolvent2}
    \|\nabla\mathrm{div}_0h_2^l\|+\|\mathrm{div}_0h_2^l\|<C.
  \end{equation}
  As in Proposition \ref{Helm.compact resolvent}, \eqref{Max.compact resolvent2} implies that $\{\mathrm{div}_0h^l_2\}_{l=1}^{\infty}$ contains a Cauchy subsequence in $L_2(\Omega)$. Similarly, this statement holds for $\{\mathrm{div}h^l_4\}_{l=1}^{\infty}$.

  Since $\|DDh^l\|\leq C$, we obtain
  \begin{equation*}
    \|\mathrm{div}(-\nabla_0 h^l_3-i\omega Jh_2^l)\|\leq C
  \end{equation*}
  and
  \begin{equation*}
    \|-\mathrm{curl}\nabla_0 h^l_3-i\omega\mathrm{curl}Jh_2^l)\|=\|i\omega\mathrm{curl}Jh_2^l)\|=\|i\omega\mathrm{div}h_2^l\|\leq C.
  \end{equation*}
  Therefore, $\{-\nabla_0 h^l_3-i\omega Jh_2^l\}_{l=1}^{\infty}$ is bounded in $H_{\mathrm{curl}}^{0}(\Omega;\mathrm{C}^2)\cap H_{\mathrm{div}}(\Omega;\mathrm{C}^2)$. From the compact embedding
  \begin{equation*}
    H_{\mathrm{curl}}^{0}(\Omega;\mathrm{C}^2)\cap H_{\mathrm{div}}(\Omega;\mathrm{C}^2)\hookrightarrow L_2(\Omega;\mathrm{C}^2),
  \end{equation*}
  see \cite{P} or \cite{AxelssonMcIntosh}, we conclude that $\{-\nabla_0 h^l_3-i\omega Jh_2^l\}_{l=1}^{\infty}$ contains a Cauchy subsequence in $L_2(\Omega;\mathrm{C}^2)$.

  Likewise, $\{-\nabla h^l_1+i\omega Jh_4^l\}_{l=1}^{\infty}$ is bounded in $H_{\mathrm{curl}}(\Omega;\mathrm{C}^2)\cap H_{\mathrm{div}}^{0}(\Omega;\mathrm{C}^2)$. Since $H_{\mathrm{curl}}(\Omega;\mathrm{C}^2)\cap H_{\mathrm{div}}^{0}(\Omega;\mathrm{C}^2)$ is compactly embedded to $L_2(\Omega;\mathrm{C}^2)$, see \cite{P} or \cite{AxelssonMcIntosh}, $\{-\nabla h^l_1+i\omega Jh_4^l\}_{l=1}^{\infty}$ contains a convergent subsequence in $L_2(\Omega;\mathrm{C}^2)$.

  From arguments above, we conclude that $\{Dh^l\}_{l=1}^{\infty}$ contains a Cauchy subsequence in $L_2(\Omega;\mathrm{C}^6)$.
\end{proof}

\section{Spectral projections and functional calculus for DB}\label{SP and FC} In this section we modify the functional calculus designed by McIntosh in \cite{McIntosh1986}, for the operators described below.

Let $\Omega\subset \mathrm{R}^n$ be a bounded weakly Lipschitz domain. From now on we consider a pointwise accretive multiplication operator $B\in L_{\infty}(\Omega; \mathrm{C}^M\times\mathrm{C}^M)$ on $L_2(\Omega;\mathrm{C}^M)$ and a closed range operator
$$
D:L_2(\Omega;\mathrm{C}^M)\rightarrow L_2(\Omega;\mathrm{C}^M)
$$
satisfying the following conditions
\begin{enumerate}
  \item There exist a bounded operator $D_0$ and a self-adjoint homogeneous first order differential operator $D_1$ with constant coefficients and local boundary conditions so that
  \begin{equation*}
    D=D_1+D_0.
  \end{equation*}
  \item The operator $(\lambda-\left.D\right|_{\mathbf{R}(D)})^{-1}$ is compact for some, and therefore for all $\lambda$ belonging to the resolvent set $\rho(\left.D\right|_{\mathbf{R}(D)})$.
\end{enumerate}

\begin{rem}
In both the Helmholtz and the Maxwell's cases, the operators $B$ and $D$ satisfy conditions above. Moreover, $D_0$ is a normal operator, and hence $D$ is normal as well.
\end{rem}

\subsection{Preliminary for functional calculus}\label{Preliminary for functional calculus} Here we consider basic properties of the operator $DB$ in order to construct a functional calculus in the next subsections. We begin with a well known result and give its prove for sake of completeness.

\begin{prop}\label{Gen splitting for L2}
We have topological splittings for $L_2(\Omega;\mathrm{C}^M)$
$$
L_2(\Omega;\mathrm{C}^M)=\mathbf{N}(D^*B)\oplus \mathbf{R}(D)
$$
and
$$
L_2(\Omega;\mathrm{C}^M)=\mathbf{N}(D^*)\oplus B\mathbf{R}(D).
$$
\end{prop}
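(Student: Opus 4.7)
The plan is to deduce both topological splittings from the standard orthogonal decomposition $L_2(\Omega;\mathrm{C}^M)=\mathbf{N}(D^*)\oplus\mathbf{R}(D)$, which is valid because $D$ is closed with closed range. The second ingredient is pointwise strict accretivity of $B$: this makes $B$ a bounded invertible operator on $L_2$ (with $B^{-1}$ being pointwise multiplication by $B(x)^{-1}$), and yields the global coercivity estimate $\operatorname{Re}\langle Bv,v\rangle\geq\alpha\|v\|^2$ for all $v\in L_2$, and likewise for $B^*$.

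First I would note that the relevant subspaces are closed: $\mathbf{N}(D^*B)=B^{-1}\mathbf{N}(D^*)$ is the preimage of a closed set under the homeomorphism $B$, and $B\mathbf{R}(D)$ is the image of a closed set under the same homeomorphism. Next I would verify that the two intersections are trivial by a direct accretivity pairing. For the first splitting, if $f\in\mathbf{N}(D^*B)\cap\mathbf{R}(D)$ then $Bf\in\mathbf{N}(D^*)=\mathbf{R}(D)^{\perp}$ while $f\in\mathbf{R}(D)$, so $\langle Bf,f\rangle=0$ and accretivity forces $f=0$. For the second splitting, if $f=Bg\in\mathbf{N}(D^*)\cap B\mathbf{R}(D)$ with $g\in\mathbf{R}(D)$, then $\langle Bg,g\rangle=\langle f,g\rangle=0$ by the same orthogonality, hence $g=0$ and $f=0$.

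For surjectivity of the sums the key object is the auxiliary operator $T:=PB|_{\mathbf{R}(D)}\colon\mathbf{R}(D)\to\mathbf{R}(D)$, where $P$ denotes the orthogonal projection of $L_2$ onto $\mathbf{R}(D)$. Since $\operatorname{Re}\langle Tr,r\rangle=\operatorname{Re}\langle Br,r\rangle\geq\alpha\|r\|^2$ for $r\in\mathbf{R}(D)$, the operator $T$ is bounded and coercive on the Hilbert space $\mathbf{R}(D)$, hence invertible by the Lax--Milgram theorem. Given any $f\in L_2$, for the first splitting I would solve $Tr=PBf$ to obtain $r\in\mathbf{R}(D)$ with $B(f-r)\in\mathbf{N}(P)=\mathbf{N}(D^*)$, so $f-r\in\mathbf{N}(D^*B)$ and $f=(f-r)+r$ is the required decomposition. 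For the second splitting I would instead solve $Tr=Pf$, so that $n:=f-Br$ satisfies $Pn=0$, i.e.\ $n\in\mathbf{N}(D^*)$, yielding $f=n+Br$.

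There is no real obstacle: the argument is a routine Hilbert-space decomposition. The only point worth double-checking is that coercivity of $PB$ on $\mathbf{R}(D)$ transfers from the global coercivity of $B$, but this is immediate from $\langle PBr,r\rangle=\langle Br,r\rangle$ when $r\in\mathbf{R}(D)$.
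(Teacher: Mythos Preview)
Your proof is correct and complete. The approach differs from the paper's in the surjectivity step: the paper first establishes norm inequalities of the form $\|g\|\lesssim\|g+B^{-1}h\|$ and $\|B^{-1}h\|\lesssim\|g+B^{-1}h\|$ for $g\in\mathbf{R}(D)$, $h\in\mathbf{N}(D^*)$ (directly from accretivity of $B^*$ and $B^{-1}$), deducing that the sum $\mathbf{N}(D^*B)\oplus\mathbf{R}(D)$ is closed, and then checks that its orthogonal complement is $\{0\}$ by another accretivity pairing. You instead invoke Lax--Milgram for the coercive operator $PB|_{\mathbf{R}(D)}$ and solve explicitly for the component in $\mathbf{R}(D)$. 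Your route is slightly more constructive and automatically yields bounded projections via $r=T^{-1}PBf$; the paper's route gives the projection bounds more transparently through the displayed norm inequalities. Both rest on the same core fact, and in fact the paper later proves (Lemma~\ref{invertibility of projection}) essentially the invertibility statement you use here.
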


\begin{proof}
Since $\mathbf{N}(B^*D^*)=\mathbf{N}(D^*)$, $\mathbf{R}(DB)=\mathbf{R}(D)$ and $B^*D^*=(DB)^*$, we obtain the following orthogonal splitting
\begin{equation*}
  L_2(\Omega;\mathrm{C}^M)=\mathbf{R}(DB)\oplus \mathbf{N}(B^*D^*)=\mathbf{R}(D)\oplus \mathbf{N}(D^*).
\end{equation*}

For nonzero $g\in \mathbf{N}(D^*)$, $(B^{-1}g,g)\neq 0$. Thus $\mathbf{R}(DB)\cap B\mathbf{N}(D^*)=\{0\}$. Since $B^*$ is an accretive operator, for $g\in \mathbf{R}(D)$ and $h\in \mathbf{N}(D^*)$, we obtain
\begin{equation}\label{max sliting eq1}
C^{-1}\|g\|^2+0\leq\text{Re}(B^{*}g,g)+\text{Re}(g,h)=\text{Re}(B^{*}g,g)+\text{Re}(B^{*}g,B^{-1}h)
\end{equation}
\begin{equation*}
=\text{Re}(B^{*}g,g+B^{-1}h)\leq C\|g\|\|g+B^{-1}h\|
\end{equation*}
for some $C>0$. Similarly
\begin{equation}\label{max sliting eq2}
C^{-1}\|B^{-1}h\|^2\leq\text{Re}(B^{*}B^{-1}h,B^{-1}h)=\text{Re}(B^{-1}h,h)
\end{equation}
\begin{equation*}
=\text{Re}(B^{-1}h+g,h)\leq C\|B^{-1}h+g\|\|h\|.
\end{equation*}
for some constant $C>0$. Therefore $B^{-1}\mathbf{N}(D^*)\oplus \mathbf{R}(D)$ is a Hilbert space. Assume $f\in(B^{-1}\mathbf{N}(D^*)\oplus \mathbf{R}(D))^{\perp}$. In particular, $f\in \mathbf{N}(D^*)$ and $f\perp \mathbf{R}(D)$. Since $B$ is an accretive operator, we see that $f=0$. Therefore
\begin{equation*}
  L_2(\Omega;\mathrm{C}^M)=B^{-1}\mathbf{N}(D^*)\oplus \mathbf{R}(D)=\mathbf{N}(D^*B)\oplus \mathbf{R}(D).
\end{equation*}

Similarly, one can prove the second splitting.
\end{proof}

\begin{prop}\label{closed and densely}
The operator
$$
\left.DB\right|_{\mathbf{R}(D)}:\mathbf{R}(D)\rightarrow \mathbf{R}(D)
$$
is a closed and densely defined operator.
\end{prop}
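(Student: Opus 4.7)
The plan is to reduce both assertions to general principles about the operator $DB$ on all of $L_2(\Omega;\mathbf{C}^M)$, and then restrict to $\mathbf{R}(D)$ using the topological splitting from Proposition \ref{Gen splitting for L2}. The hypotheses I want to exploit are that $B$ is bounded with bounded inverse (by accretivity), that $D$ is closed with closed range, and crucially that $D=D_1+D_0$ with $D_1$ self-adjoint and $D_0$ bounded, so $\mathbf{D}(D)=\mathbf{D}(D_1)=\mathbf{D}(D^*)$.

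\textbf{Closedness.} First I would verify that $DB$, with domain $\mathbf{D}(DB)=B^{-1}\mathbf{D}(D)$, is closed as an operator on $L_2(\Omega;\mathbf{C}^M)$. If $f_n\to f$ and $DBf_n\to g$, then $Bf_n\to Bf$ since $B$ is bounded, and closedness of $D$ gives $Bf\in\mathbf{D}(D)$ with $DBf=g$. Since $\mathbf{R}(D)$ is closed in $L_2$ and $\mathbf{R}(D)$ is invariant (the range of $DB$ already lies in $\mathbf{R}(D)=\mathbf{R}(DB)$), the restriction $DB|_{\mathbf{R}(D)}$ is then automatically closed as an operator on $\mathbf{R}(D)$.

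\textbf{Density.} This is the more delicate part. I would first note that $\mathbf{D}(DB)=B^{-1}\mathbf{D}(D)$ is dense in $L_2(\Omega;\mathbf{C}^M)$, because $B^{-1}$ is a bounded bijection and $\mathbf{D}(D)\supset\mathbf{D}(D_1)$ is dense. Given $f\in\mathbf{R}(D)$, I pick $g_n\in\mathbf{D}(DB)$ with $g_n\to f$, and use the topological splitting
\begin{equation*}
  L_2(\Omega;\mathbf{C}^M)=\mathbf{N}(D^*B)\oplus\mathbf{R}(D)
\end{equation*}
to decompose $g_n=h_n+f_n$ with $h_n\in\mathbf{N}(D^*B)$ and $f_n\in\mathbf{R}(D)$. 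Since the associated projection $P$ onto $\mathbf{R}(D)$ is bounded and $Pf=f$, we get $f_n=Pg_n\to f$ in $L_2$, hence in $\mathbf{R}(D)$.

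It remains to show that each $f_n$ lies in $\mathbf{D}(DB)$, i.e.\ that $Bf_n\in\mathbf{D}(D)$. Writing $Bf_n=Bg_n-Bh_n$, the term $Bg_n$ is in $\mathbf{D}(D)$ by construction. For $Bh_n$, the defining property $h_n\in\mathbf{N}(D^*B)$ says precisely that $Bh_n\in\mathbf{N}(D^*)$, and here is where the assumption $D=D_1+D_0$ enters decisively: because $D_1$ is self-adjoint and $D_0$ is bounded, $\mathbf{D}(D^*)=\mathbf{D}(D_1)=\mathbf{D}(D)$, so $\mathbf{N}(D^*)\subset\mathbf{D}(D)$, and thus $Bh_n\in\mathbf{D}(D)$. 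Hence $f_n\in\mathbf{D}(DB)\cap\mathbf{R}(D)$, and the sequence witnesses the density of $\mathbf{D}(DB|_{\mathbf{R}(D)})$ in $\mathbf{R}(D)$.

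The step I anticipate as the main obstacle, or at least the essential structural input, is this final containment $\mathbf{N}(D^*)\subset\mathbf{D}(D)$: without the splitting $D=D_1+D_0$ with $D_1$ self-adjoint, one would have to check case-by-case (as one can for the explicit Helmholtz and Maxwell $D^*$ written down earlier) that the kernel of the adjoint consists of vectors satisfying the same boundary regularity as $\mathbf{D}(D)$.
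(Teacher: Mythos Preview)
Your proof is correct and follows essentially the same route as the paper: closedness is handled identically, and for density both you and the paper approximate $f\in\mathbf{R}(D)$ by elements of $\mathbf{D}(DB)$ and then project onto $\mathbf{R}(D)$ along $\mathbf{N}(D^*B)$ using the topological splitting of Proposition~\ref{Gen splitting for L2}. The paper packages the key step as the observation $\mathbf{N}(D^*B)\subset\mathbf{D}(DB)$ (stated without justification) and then records the splitting $\mathbf{D}(DB)=[\mathbf{D}(DB)\cap\mathbf{R}(D)]\oplus\mathbf{N}(D^*B)$, whereas you unpack that observation explicitly via $\mathbf{D}(D^*)=\mathbf{D}(D_1)=\mathbf{D}(D)$; these are the same point.
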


\begin{proof}
Note that $\mathbf{N}(D^*B)\subset\mathbf D(DB)$. Therefore, from Proposition \ref{Gen splitting for L2}, we obtain
\begin{equation}\label{splitting of domain}
  \mathbf D(DB)=[\mathbf D(DB)\cap \mathbf{R}(D)]\oplus \mathbf{N}(D^*B).
\end{equation}
Let us fix $\varepsilon>0$ and $f\in \mathbf{R}(D)$. Since $B$ is an invertible bounded operator and $\mathbf{D}(D)$ is a dense set in $L_2(\Omega;\mathrm{C}^M)$, we deduce that $\mathbf{D}(DB)=B^{-1}\mathbf{D}(D)$ is dense in $L_2(\Omega;\mathrm{C}^M)$. Therefore, from \eqref{splitting of domain}, we can find $g\in \mathbf D(DB)\cap \mathbf{R}(D)$ and $h\in \mathbf{N}(D^*B)$ such that $\|g+h-f\|\leq\varepsilon$. On the other hand, Proposition \ref{Gen splitting for L2} gives
$$
\|g+h-f\|\geq C(\|g-f\|+\|h\|).
$$
Hence $\|g-f\|\leq\frac{\varepsilon}{C}$, that is the set $\mathbf D(DB)\cap \mathbf{R}(D)$ is dense in $\mathbf{R}(D)$.

The operator
\begin{equation*}
  DB: L_2(\Omega;\mathrm{C}^M)\rightarrow L_2(\Omega;\mathrm{C}^M)
\end{equation*}
is closed and $\mathbf{R}(D)$ is closed in $L_2(\Omega;\mathrm{C}^M)$. Hence, the operator $\left.DB\right|_{\mathbf{R}(D)}$ is closed.
\end{proof}

To state the next proposition let us set
\begin{equation*}
  S_{\alpha,\tau}:=\{x+iy\in \mathrm{C}: |y|<|x|\tan \alpha+\tau\}
\end{equation*}
for $\alpha\in [0,\frac{\pi}{2})$ and $\tau\geq0$, and define the angle and constant of accretivity of $B$ to be
\begin{equation*}
  \omega:=\sup_{v\in \mathrm{C}^M}|\arg (Bv,v)|<\frac{\pi}{2}
  \quad
  \text{and}
  \quad
  \beta:=\inf_{v\in \mathrm{C}^M}\frac{\mathrm{Re}(Bv,v)}{\|v\|^2}
\end{equation*}
respectively.

\begin{prop}\label{Gen resolvent of DB}
  There exist constants $\tau,C>0$, depending only on $\|D_0B\|$, $\|B\|$ and $\beta$, such that $\sigma(DB)\subset S_{\omega,\tau}$ and
  \begin{equation}\label{Gen resolvent of DB0}
    \|(\lambda-DB)^{-1}\|\leq \frac{C}{\text{dist}(\lambda,S_{\omega,0})}
  \end{equation}
  for any $\lambda\notin S_{\omega,\tau}$.
\end{prop}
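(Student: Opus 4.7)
The plan is to reduce to the unperturbed operator $D_1B$ and then treat $D_0B$ as a bounded perturbation via a Neumann series.

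\textbf{Step 1: Bisectoriality of $D_1B$.} First I would invoke the known result that $D_1B$ is a bisectorial operator on $L_2(\Omega;\mathbf{C}^M)$ whose spectrum is contained in the closed bi-sector $S_{\omega,0}$, with the resolvent bound
\begin{equation*}
  \|(\lambda - D_1B)^{-1}\| \leq \frac{C_0}{\mathrm{dist}(\lambda, S_{\omega,0})}, \qquad \lambda \notin S_{\omega,0},
\end{equation*}
where $C_0$ depends only on $\|B\|$ and the accretivity constant $\beta$. This is the standard bi-sectoriality of $DB$-type operators from Axelsson--Keith--McIntosh, which applies because $D_1$ is self-adjoint with closed range (by the analogues of Propositions~\ref{Helm closed range} and \ref{Max.closed range} in the static case $D_0=0$) and $B$ is accretive with angle $\omega$ on $\overline{\mathbf{R}(D_1)}$.

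\textbf{Step 2: Perturbative factorization.} I would write, for $\lambda \notin S_{\omega,0}$,
\begin{equation*}
  \lambda - DB = (\lambda - D_1B) - D_0B = (\lambda - D_1B)\bigl(I - (\lambda - D_1B)^{-1}D_0B\bigr).
\end{equation*}
Since $D_0B$ is bounded, Step~1 gives
\begin{equation*}
  \|(\lambda - D_1B)^{-1}D_0B\| \leq \frac{C_0\|D_0B\|}{\mathrm{dist}(\lambda, S_{\omega,0})}.
\end{equation*}
Whenever $\mathrm{dist}(\lambda, S_{\omega,0}) > 2C_0\|D_0B\|$, the operator $I - (\lambda - D_1B)^{-1}D_0B$ is invertible by Neumann series with inverse of norm at most $2$.

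\textbf{Step 3: Choice of $\tau$ and final estimate.} Since $S_{\omega,\tau}$ is a $\tau$-thickening of $S_{\omega,0}$, any $\lambda \notin S_{\omega,\tau}$ satisfies $\mathrm{dist}(\lambda, S_{\omega,0}) \geq \tau\cos\omega$. Setting $\tau := 2C_0\|D_0B\|/\cos\omega$, the previous step applies for every $\lambda \notin S_{\omega,\tau}$, giving invertibility of $\lambda - DB$ and, by combining the two factor bounds,
\begin{equation*}
  \|(\lambda - DB)^{-1}\| \leq \frac{2C_0}{\mathrm{dist}(\lambda, S_{\omega,0})}.
\end{equation*}
This yields $\sigma(DB) \subset S_{\omega,\tau}$ and the claimed estimate with $C = 2C_0$.

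\textbf{Main obstacle.} The only nontrivial input is Step~1: the resolvent bound for $D_1B$ outside the bi-sector $S_{\omega,0}$. The rest is an elementary Neumann-series perturbation. Some care is needed to check that the constants $C_0$ and $\tau$ depend only on $\|B\|$, $\beta$, and $\|D_0B\|$, as claimed; in particular the angle $\omega$ is determined by $B$ alone. If the paper has not already recorded the bisectoriality of $D_1B$ in the required form, I would include a brief sketch via the quadratic estimates of Proposition~\ref{quadratic estimate}, but otherwise the argument is a direct two-line perturbation.
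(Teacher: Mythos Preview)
Your proposal is correct and follows essentially the same approach as the paper: both reduce to the bisectoriality of $D_1B$ (which the paper cites from \cite{AAM}, Proposition~3.3) and treat $D_0B$ as a bounded perturbation. The only technical difference is that the paper obtains injectivity from the lower bound $\|(\lambda-DB)u\|\geq C\,\mathrm{dist}(\lambda,S_{\omega,0})\|u\|-\|D_0B\|\|u\|$ and then argues surjectivity separately via the adjoint $(\lambda-DB)^*=B^*(\overline{\lambda}-D^*B^*)B^{*-1}$, whereas your Neumann-series factorisation gives both at once.
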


\begin{proof}
Since $D_1$ is self-adjoint, $D_1B$ is bisectorial, see [\cite{AAM},Proposition 3.3]. Therefore, for any $\lambda \notin S_{\omega,0}$ and $u\in \mathbf{D}(DB)$,
$$
\|(\lambda-DB)u\|\geq\|(\lambda-D_1B)u\|-\|D_0Bu\|\geq C\text{dist}(\lambda,S_{\omega,0})\|u\|-\|D_0B\|\|u\|.
$$
Thus, for sufficiently large $\tau>0$ and any $\lambda \notin S_{\omega,\tau}$,
\begin{equation*}
  \frac{C}{2}\text{dist}(\lambda,S_{\omega,0})\|u\|\geq \|D_0B\|\|u\|,
\end{equation*}
and therefore
\begin{equation}\label{Gen resolvent of DB1}
  \|(\lambda-DB)u\|\geq \frac{C}{2}\text{dist}(\lambda,S_{\omega,0})\|u\|.
\end{equation}
Hence $\lambda-DB$ is an injective operator with closed range. Next, let us consider the adjoint operator
\begin{equation*}
  (\lambda-DB)^*=\overline{\lambda}-B^*D^*=B^*(\overline{\lambda}-D^*B^*)B^{*-1}.
\end{equation*}
Similarly, we conclude that $\overline{\lambda}-D^*B^*$, and therefore $(\lambda-DB)^*$ are injective. Hence $\lambda-DB$ is a surjective operator. Thus, $\lambda \notin S_{\omega,\tau}$ is contained in the resolvent set and \eqref{Gen resolvent of DB1} implies \eqref{Gen resolvent of DB0}.
\end{proof}

Let $P_{\mathbf{R}(D)}$ and $P_{\mathbf{N}(D^*)}$ be orthogonal projections to $\mathbf{R}(D)$ and $\mathbf{N}(D^*)$ corresponding to the splitting
\begin{equation}\label{usual splitting}
  L_2(\Omega;\mathrm{C}^M)=\mathbf{R}(D)\oplus \mathbf{N}(D^*).
\end{equation}

\begin{lem}\label{invertibility of projection}
The operator
$$
\left.P_{\mathbf{R}(D)}\right|_{B\mathbf{R}(D)}:B\mathbf{R}(D)\rightarrow \mathbf{R}(D)
$$
is bounded and invertible.
\end{lem}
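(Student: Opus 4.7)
The plan is to exploit the two splittings provided by Proposition \ref{Gen splitting for L2}, namely the orthogonal decomposition
\begin{equation*}
  L_2(\Omega;\mathrm{C}^M)=\mathbf{R}(D)\oplus\mathbf{N}(D^*)
\end{equation*}
(which defines $P_{\mathbf{R}(D)}$) and the topological decomposition
\begin{equation*}
  L_2(\Omega;\mathrm{C}^M)=\mathbf{N}(D^*)\oplus B\mathbf{R}(D).
\end{equation*}
The second splitting in particular tells us that $B\mathbf{R}(D)$ is a closed subspace of $L_2$, hence a Hilbert space, so the open mapping theorem applies once we have a continuous bijection.

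Boundedness is immediate since $P_{\mathbf{R}(D)}$ is an orthogonal projection and therefore has norm at most $1$. For injectivity, suppose $u\in B\mathbf{R}(D)$ satisfies $P_{\mathbf{R}(D)}u=0$. Then $u\in \mathbf{N}(D^*)\cap B\mathbf{R}(D)$, and the second splitting above forces $u=0$. For surjectivity, given $g\in \mathbf{R}(D)\subset L_2(\Omega;\mathrm{C}^M)$, decompose it according to the second splitting as $g=h+Bf$ with $h\in\mathbf{N}(D^*)$ and $Bf\in B\mathbf{R}(D)$. Applying $P_{\mathbf{R}(D)}$ and using the orthogonal splitting yields
\begin{equation*}
  P_{\mathbf{R}(D)}(Bf)=P_{\mathbf{R}(D)}(g-h)=g-0=g,
\end{equation*}
so $g$ lies in the range. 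Finally, since $P_{\mathbf{R}(D)}|_{B\mathbf{R}(D)}$ is a bounded bijection between the two Hilbert spaces $B\mathbf{R}(D)$ and $\mathbf{R}(D)$, the open mapping theorem guarantees that the inverse is bounded.

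There is no real obstacle here; the whole argument is a formal consequence of the two decompositions already established in Proposition \ref{Gen splitting for L2}. The only point needing mild care is verifying that $B\mathbf{R}(D)$ is closed, which, rather than being re-proved, is read off from the topological nature of the splitting $L_2=\mathbf{N}(D^*)\oplus B\mathbf{R}(D)$, itself a consequence of the accretivity estimates \eqref{max sliting eq1}--\eqref{max sliting eq2} in the proof of that proposition.
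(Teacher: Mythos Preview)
Your proof is correct and follows essentially the same approach as the paper's: both deduce surjectivity from the second splitting $L_2=\mathbf{N}(D^*)\oplus B\mathbf{R}(D)$ and then invoke the bounded inverse theorem. The only cosmetic difference is in the injectivity step, where the paper argues directly from accretivity (if $P_{\mathbf{R}(D)}BDf=0$ then $(BDf,Df)=0$, hence $Df=0$), whereas you use the already-established fact that $\mathbf{N}(D^*)\cap B\mathbf{R}(D)=\{0\}$ from the same splitting; these are equivalent.
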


\begin{proof}
If $P_{\mathbf{R}(D)}BDf=0$, then $(BDf,Df)=0$. This implies that $Df=0$, and hence that $\left.P_{\mathbf{R}(D)}\right|_{B\mathbf{R}(D)}$ is an injective operator. The second splitting in Proposition \ref{Gen splitting for L2}
implies that operator $\left.P_{\mathbf{R}(D)}\right|_{B\mathbf{R}(D)}$ is surjective. Thus, by the bounded inverse theorem, we get the statement of the lemma.
\end{proof}

\begin{prop}\label{Abs.compact resolvent}
Let $\lambda\in\rho(\left.DB\right|_{\mathbf{R}(D)})$, then
$$
(\lambda-\left.DB\right|_{\mathbf{R}(D)})^{-1}:\mathbf{R}(D)\rightarrow \mathbf{R}(D)
$$
is a compact operator.
\end{prop}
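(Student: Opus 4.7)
The plan is to follow the pattern of Propositions \ref{Helm.compact resolvent} and \ref{Max.compact resolvent}, reducing matters to the compactness already assumed for $(\lambda-\left.D\right|_{\mathbf{R}(D)})^{-1}$. Since the identity
$$\left.DB\right|_{\mathbf{R}(D)}(\lambda-\left.DB\right|_{\mathbf{R}(D)})^{-1}=\lambda(\lambda-\left.DB\right|_{\mathbf{R}(D)})^{-1}-I$$
shows that $\left.DB\right|_{\mathbf{R}(D)}(\lambda-\left.DB\right|_{\mathbf{R}(D)})^{-1}$ is bounded on $\mathbf{R}(D)$, it suffices to prove that the embedding
$$\bigl(\mathbf{D}(DB)\cap\mathbf{R}(D),\ \|f\|+\|DBf\|\bigr)\hookrightarrow\bigl(\mathbf{R}(D),\ \|\cdot\|_{L_2}\bigr)$$
is compact.

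Let $\{f_k\}\subset\mathbf{D}(DB)\cap\mathbf{R}(D)$ be bounded in the graph norm and set $u_k:=Bf_k$. Then $\|u_k\|+\|Du_k\|\leq C$ since $B$ is bounded, and $u_k\in B\mathbf{R}(D)$. Using the orthogonal splitting $L_2(\Omega;\mathrm{C}^M)=\mathbf{R}(D)\oplus\mathbf{N}(D^*)$, decompose $u_k=v_k+w_k$ with $v_k:=P_{\mathbf{R}(D)}u_k\in\mathbf{R}(D)$ and $w_k:=P_{\mathbf{N}(D^*)}u_k\in\mathbf{N}(D^*)$.

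The key observation is that $\{v_k\}$ is uniformly bounded in the graph norm of $\left.D\right|_{\mathbf{R}(D)}$. Indeed, since $D_0$ is bounded and $D_1$ is self-adjoint, $\mathbf{D}(D)=\mathbf{D}(D_1)=\mathbf{D}(D^*)$, so $w_k\in\mathbf{N}(D^*)\subset\mathbf{D}(D)$ and hence $v_k=u_k-w_k\in\mathbf{D}(D)\cap\mathbf{R}(D)$. Moreover, $D^*w_k=0$ rewrites as $D_1w_k=-D_0^*w_k$, which yields $Dw_k=D_1w_k+D_0w_k=(D_0-D_0^*)w_k$, bounded by $\|D_0-D_0^*\|\,\|w_k\|\leq\|D_0-D_0^*\|\,\|u_k\|$. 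Thus $\|Dv_k\|\leq\|Du_k\|+\|Dw_k\|$ is uniformly bounded, and assumption (2) produces a Cauchy subsequence of $\{v_k\}$ in $L_2$.

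To finish, Lemma \ref{invertibility of projection} provides a bounded inverse of $\left.P_{\mathbf{R}(D)}\right|_{B\mathbf{R}(D)}:B\mathbf{R}(D)\to\mathbf{R}(D)$, so from $v_k=P_{\mathbf{R}(D)}u_k$ and $u_k\in B\mathbf{R}(D)$ a Cauchy subsequence of $\{v_k\}$ yields a Cauchy subsequence of $\{u_k\}$, and then $\{f_k\}=\{B^{-1}u_k\}$ is Cauchy in $L_2$ by boundedness of $B^{-1}$. The main (but minor) obstacle is the identity $Dw=(D_0-D_0^*)w$ for $w\in\mathbf{N}(D^*)$: this is the point where the structural assumption $D=D_1+D_0$ with $D_1$ self-adjoint genuinely enters, ensuring that the $\mathbf{N}(D^*)$-component of $u_k$ does not interfere with the graph bounds of its $\mathbf{R}(D)$-component.
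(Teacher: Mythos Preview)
Your proof is correct and follows essentially the same route as the paper: reduce to compactness of the graph-norm embedding, project $Bf_k$ onto $\mathbf{R}(D)$ along $\mathbf{N}(D^*)$, use that $D$ restricted to $\mathbf{N}(D^*)$ is bounded to get a graph-norm bound on the $\mathbf{R}(D)$-component, apply the compactness hypothesis for $D$, and finish with Lemma~\ref{invertibility of projection}. The only difference is that you supply the explicit identity $Dw=(D_0-D_0^*)w$ for $w\in\mathbf{N}(D^*)$, whereas the paper simply asserts that $\left.D\right|_{\mathbf{N}(D^*)}$ is bounded; your version is a welcome clarification of exactly where the structural decomposition $D=D_1+D_0$ is used.
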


\begin{proof}
As in Propositions \ref{Helm.compact resolvent} and \ref{Max.compact resolvent}, it suffices to prove that the embedding
$$
\left(\mathbf{D}(DB)\cap \mathbf{R}(D), \|\cdot\|_{\mathbf{D}(DB)\cap \mathbf{R}(DB)}\right)
\hookrightarrow\left(\mathbf{R}(D), \|\cdot\|_{L_2}\right)
$$
is compact.

Let $\{f^l\}_{l=1}^{\infty}\subset\left(\mathbf{D}(DB)\cap \mathbf{R}(D), \|\cdot\|_{\mathbf{D}(DB)\cap \mathbf{R}(DB)}\right)$ be a sequence such that
$$
\|f^l\|+\|DBf^l\|\leq C
$$
for some $C>0$. Since $\left.D\right|_{\mathbf{N}(D^{*})}$ is bounded, splitting \eqref{usual splitting} implies
$$
\|f^l\|+\|DP_{\mathbf{R}(D)}Bf^l\|\leq C,
$$
and therefore
$$
\|P_{\mathbf{R}(D)}Bf^l\|+\|DP_{\mathbf{R}(D)}Bf^l\|\leq C
$$
for some $C>0$. Since $(\lambda-\left.D\right|_{\mathbf{R}(D)})^{-1}$ is a compact operator, we see that
$$
\left(\mathbf{D}(D)\cap \mathbf{R}(D), \|\cdot\|_{\mathbf{D}(D)\cap \mathbf{R}(D)}\right)\hookrightarrow\left(\mathbf{R}(D), \|\cdot\|_{L_2}\right)
$$
is a compact embedding. Hence, the sequence $\{P_{\mathbf{R}(D)}Bf^l\}_{l=1}^{\infty}$ contains a Cauchy subsequence, and therefore Lemma \ref{invertibility of projection} implies that the sequence $\{f^l\}_{l=1}^{\infty}$ contains a Cauchy subsequence in $L_2(\Omega; \mathrm{C}^M)$ as well.
\end{proof}

We conclude this preliminary subsection by introducing the following setup. We fix constant $\tau>0$ from Proposition \ref{Gen resolvent of DB} and define
\begin{equation*}
  \mathcal{H}:=\mathbf{R}(D),
  \qquad
  T:=\left.DB\right|_{\mathcal{H}},
  \qquad
  T_1:=\left.D_1B\right|_{\mathcal{H}},
  \qquad
  T_0:=\left.D_0B\right|_{\mathcal{H}}.
\end{equation*}
By summarizing Propositions \ref{closed and densely}, \ref{Gen resolvent of DB} and \ref{Abs.compact resolvent}, we conclude that $T$ is a closed densely defined operator with $\sigma(T)\subset S_{\omega,\tau}$. Moreover, for each $\lambda\notin S_{\omega,\tau}$, the operator $(\lambda-T)^{-1}$ is compact,
and hence there may be only a finite number of eigenvalues of $T$ on the imaginary axis. We denote them by $\{\lambda_{i}^0\}_{i=1}^{N}$. We fix constants $a, R>0$, such that $R<a$ and
\begin{equation}\label{constant a}
  \sigma(T)\cap\{\zeta\in \mathrm{C}: \; |\text{Re}\zeta|\leq a\}=\{\lambda_{i}^0\}_{i=1}^{N},
\end{equation}
\begin{equation}\label{constant R}
  \{\zeta\in \mathrm{C}: |\zeta-\lambda_{i}^{0}|\leq R\}\cap \{\zeta\in \mathrm{C}: |\zeta-\lambda_{j}^{0}|\leq R\}=\emptyset
\end{equation}
and
\begin{equation*}
  \{\zeta\in \mathrm{C}: |\zeta-\lambda_{i}^{0}|\leq R\}\subset S_{\omega,\tau}
\end{equation*}
for $1\leq i<j\leq N$.

For $\mu\in(\omega,\frac{\pi}{2})$, we fix the open set
\begin{equation*}
  \Sigma:=\Sigma^-\cup\Sigma^+\cup\Sigma^0,
\end{equation*}
where
\begin{equation*}
  \Sigma^{\pm}:=\{\zeta\in \mathrm{C}: \; \pm\text{Re}\zeta>a, \; |\text{Im}\zeta|< \tau+|\text{Re}\zeta|\tan\mu\}
\end{equation*}
and
\begin{equation*}
  \Sigma^0:=\cup_{i=1}^{N}\{\zeta\in \mathrm{C}: |\zeta-\lambda_{i}^{0}|< R\}.
\end{equation*}
Due to \eqref{constant a} and \eqref{constant R}, $\Sigma$ is a disjoint union of $N+2$ open, connected sets and $\sigma(T)\subset\Sigma$.

Next we define
\begin{equation*}
  H^{\infty}(\Sigma):=\{h:\Sigma\rightarrow \mathrm{C} \; \text{holomorphic}, \; \sup_{z\in \Sigma}|h(z)|<\infty\},
\end{equation*}
\begin{equation*}
  \Theta(\Sigma):=\{\psi\in H^{\infty}(\Sigma): |\psi(z)|\leq \frac{C}{|z|^{\alpha}}, \; \text{for some} \; \alpha,C>0 \; \text{and all} \; z\in \Sigma\}.
\end{equation*}
For $b>a$ such that
\begin{equation}\label{FC constant b}
  \sigma(T)\cap\{\zeta\in \mathrm{C}: \; a\leq|\text{Re}\zeta|\leq b\}=\emptyset
\end{equation}
and $\nu\in(\omega,\mu)$, $r<R$, we define anti-clockwise oriented curves
\begin{equation*}
  \gamma^{\pm}:=\{\zeta\in \mathrm{C}: \; \pm\text{Re}\zeta= b, \; |\text{Im}\zeta|\leq \tau+|\text{Re}\zeta|\tan\nu\}
\end{equation*}
\begin{flushright}
\begin{equation*}
  \bigcup \{\zeta\in \mathrm{C}: \; \pm\text{Re}\zeta>b, \; \text{Im}\zeta= \tau+|\text{Re}\zeta|\tan\nu\}
\end{equation*}
\end{flushright}
\begin{flushright}
\begin{equation*}
  \bigcup \{\zeta\in \mathrm{C}: \; \pm\text{Re}\zeta>b, \; \text{Im}\zeta=-\left(\tau+|\text{Re}\zeta|\tan\nu\right)\},
\end{equation*}
\end{flushright}
\begin{equation}\label{FC definition of gamma0}
  \gamma^0:=\bigcup_{i=1}^N\{\zeta\in \mathrm{C}: \; |\zeta-\lambda_{i}^{0}|=r\}
\end{equation}
and
\begin{equation}\label{definition of gamma}
  \gamma:=\gamma^-\cup\gamma^+\cup\gamma^0.
\end{equation}
See figure 1.

\begin{figure}[h!]
\centering
\includegraphics[width=100mm]{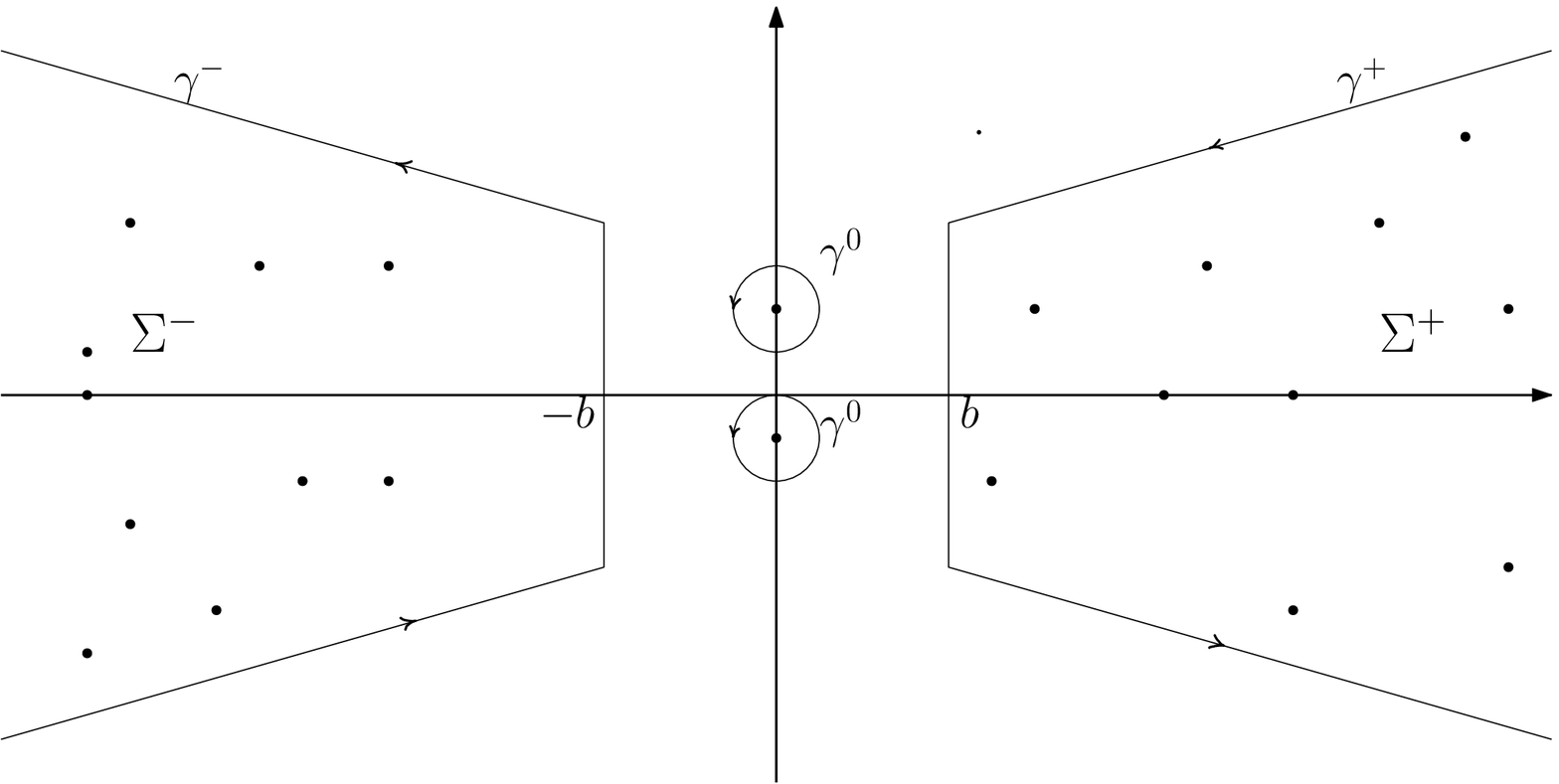}
\caption{N=2}
\end{figure}

\subsection{The $\mathbf{\Theta(\Sigma)}$ functional calculus} In this subsection we introduce the following preliminary functional calculus.

\begin{defn}\label{definition of psi(T)}
Let $r<R$, $0<\nu<\mu$ and $b>a$ such that \ref{FC constant b} holds. For $\psi\in \Theta(\Sigma)$, we define $\psi(T)$ by
\begin{equation}\label{definition of psi(T)1}
  \psi(T)=\frac{1}{2\pi i}\int_{\gamma}\frac{\psi(\zeta)}{\zeta-T}d\zeta,
\end{equation}
where $\gamma$ is the curve defined in \eqref{definition of gamma}.
\end{defn}

A justification of this definition follows from the next proposition.

\begin{prop}\label{projection for bisector}
For $\psi\in \Theta(\Sigma)$, the integral
\begin{equation*}
  \frac{1}{2\pi i}\int_{\gamma}\frac{\psi(\zeta)}{\zeta-T}d\zeta
\end{equation*}
converges absolutely. Moreover, the integral is independent of the choice of\\ $\gamma=\gamma^{\pm}(r,\nu,b)$, where $\nu\in (\omega,\mu)$ and $0<r<R$, $b>a$ such that \eqref{FC constant b} holds.
\end{prop}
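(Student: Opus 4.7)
The plan is to prove the two assertions separately: first absolute convergence of the contour integral, then independence of the choice of $\gamma$ by an operator-valued Cauchy theorem combined with a truncation argument at infinity.

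For absolute convergence, I would split $\gamma$ into its three pieces $\gamma^{\pm}$ and $\gamma^0$ and estimate $\|\psi(\zeta)(\zeta-T)^{-1}\|$ on each. The small circles $\gamma^0$ lie in the resolvent set (their radii $r<R$ and the disks of radius $R$ around the $\lambda_i^0$ are pairwise disjoint and meet $\sigma(T)$ only at $\lambda_i^0$), so $\zeta\mapsto(\zeta-T)^{-1}$ is continuous there; since $\psi\in H^{\infty}(\Sigma)$ and $\gamma^0$ has finite length, this part is trivially finite. On $\gamma^{\pm}$, the vertical segments at $\text{Re}\zeta=\pm b$ lie in $\rho(T)$ by \eqref{FC constant b} and have finite length, so they contribute a finite amount. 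On the slanted rays $|\text{Im}\zeta|=\tau+|\text{Re}\zeta|\tan\nu$ with $|\text{Re}\zeta|>b$, the condition $\nu>\omega$ guarantees $\zeta\notin S_{\omega,\tau}$ and in fact $\text{dist}(\zeta,S_{\omega,0})\geq c|\zeta|$ for some $c>0$ depending on $\nu-\omega$; combining with Proposition \ref{Gen resolvent of DB} and the decay $|\psi(\zeta)|\leq C|\zeta|^{-\alpha}$, the integrand is bounded by $C'|\zeta|^{-1-\alpha}$, which is integrable along the rays. Summing the three estimates gives absolute convergence.

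For independence of $\gamma$, fix two admissible choices $\gamma=\gamma(r,\nu,b)$ and $\gamma'=\gamma(r',\nu',b')$. The operator-valued function $\zeta\mapsto\psi(\zeta)(\zeta-T)^{-1}$ is holomorphic on $\Sigma\cap\rho(T)$, and by construction the region lying between $\gamma$ and $\gamma'$ (excluding disks around the $\lambda_i^0$) is contained in this set. I would first truncate both contours at $|\text{Re}\zeta|\leq M$ and close them by short vertical segments at $\text{Re}\zeta=\pm M$ joining $\gamma$ and $\gamma'$; the resulting bounded cycles (one for $\gamma^+$ vs.\ $(\gamma')^+$, one for $\gamma^-$ vs.\ $(\gamma')^-$, and one annular piece around each $\lambda_i^0$ when $r\ne r'$) bound regions that lie entirely in $\rho(T)$, so the Cauchy theorem for Banach-space valued holomorphic functions yields equality of the truncated integrals. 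The auxiliary vertical segments at $|\text{Re}\zeta|=M$ have length $O(M)$ but the integrand is $O(M^{-1-\alpha})$, so their contribution is $O(M^{-\alpha})\to 0$ as $M\to\infty$. Letting $M\to\infty$ identifies the full integrals along $\gamma$ and $\gamma'$. Independence from $r$ alone follows from a standard annular Cauchy argument on the small circles around each $\lambda_i^0$.

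The main obstacle I expect is the uniformity of the resolvent estimates needed to make the contour deformation rigorous across the full parameter range $\nu\in(\omega,\mu)$, $r<R$, $b>a$. In particular, one must verify that the entire region swept out between $\gamma$ and $\gamma'$ is actually inside $\rho(T)$, which requires combining Proposition \ref{Gen resolvent of DB} (for the angular sector part) with the discreteness of the spectrum given by Proposition \ref{Abs.compact resolvent} (to handle the bounded connecting segments), and that the bound \eqref{Gen resolvent of DB0} is strong enough uniformly on those segments to make the $M\to\infty$ boundary terms vanish. Once these uniform estimates are in place, the rest of the argument is a routine application of the vector-valued Cauchy integral theorem.
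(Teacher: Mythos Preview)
Your proposal is correct and follows essentially the same strategy as the paper: the same resolvent/decay estimate for absolute convergence, and truncation plus the vector-valued Cauchy theorem for contour independence. The only cosmetic difference is that the paper closes the truncated contours with short circular arcs $\delta_P^{\pm}$ sweeping from angle $\nu_1$ to $\nu_2$ (and treats the parameters $\nu$ and $b$ one at a time) whereas you close with vertical segments at $|\re\zeta|=M$ and handle all parameters simultaneously; both closures have length $O(M)$ against an integrand of order $M^{-1-\alpha}$, so the boundary terms vanish in the limit either way.
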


\begin{proof}
  We give only the main ideas of the proof. For $\psi\in \Theta(\Sigma)$, Proposition \ref{Gen resolvent of DB} implies
  \begin{equation*}
    |\psi(\zeta)|\|(\zeta-T)^{-1}\|\leq C\frac{1}{|\zeta|}\frac{1}{|\zeta|^{\alpha}}.
  \end{equation*}
  Therefore, the first statement follows from the convergence
  \begin{equation*}
    \int_{\varepsilon}^{+\infty}\frac{1}{x^{\alpha+1}}dx<\infty
  \end{equation*}
  for $\varepsilon>0$.

  Next, let us prove that the integral is independent of the choice of $\nu$. Assume $\omega<\nu_1<\nu_2<\mu$. For $P>0$, we set
  \begin{equation*}
    \delta_P^+(t):= b\pm i(b\tan \nu +\tau)+ Pe^{\pm i(t\nu_2+(1-t)\nu_1)}.
  \end{equation*}
  Then
  \begin{equation*}
    \left\|\int_{\delta_P^{\pm}}\frac{\psi(\zeta)}{\zeta-T}d\zeta\right\|\leq C l(\delta_P^{\pm})\frac{1}{P^{\alpha}}\frac{1}{P}\leq C\frac{1}{P^{\alpha}},
  \end{equation*}
  where $l(\delta_P^{\pm})$ is the length of $\delta_P^{\pm}$. Letting $P\rightarrow \infty$, we obtain independence on the choice of $\nu$.

  Finally, suppose $b_1$ and $b_2$ satisfy appropriate assumptions of the proposition and $b_1<b_2$. Then, there is no spectrum point inside the region $b_1\leq \mathrm{Re}\lambda\leq b_2$. This shows that integral is independent of choice of $b$.
\end{proof}

The proofs of the next three propositions are standard and based on proofs for bisectorial operators, see for instance \cite{AlbrechtDuongMcIntosh}, \cite{AuscherAxelsson2011}. First we prove that the map given by \eqref{definition of psi(T)1} is an algebra homomorphism.

\begin{prop}\label{fg equal fg}
If $\psi_1,\psi_2\in \Theta(\Sigma)$, then
\begin{equation*}
  \psi_1(T)+\psi_2(T)=(\psi_1+\psi_2)(T)
\end{equation*}
and
\begin{equation*}
  \psi_1(T)\psi_2(T)=(\psi_1\psi_2)(T).
\end{equation*}
\end{prop}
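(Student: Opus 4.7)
\textbf{Additivity} of $\psi\mapsto\psi(T)$ is immediate from linearity of the contour integral in Definition \ref{definition of psi(T)}, since both sides are computed along the same $\gamma$.

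For \textbf{multiplicativity} my plan is to adapt the resolvent-identity trick from the standard bi-sectorial setting to the compound contour $\gamma=\gamma^-\cup\gamma^+\cup\gamma^0$. First I would fix a second contour $\gamma'=\gamma'^-\cup\gamma'^+\cup\gamma'^0$ of the same form but with parameters $\nu'\in(\nu,\mu)$, $r'\in(r,R)$ and $b'\in(a,b)$ (still satisfying \eqref{FC constant b}, which is automatic since $\{a\le|\mathrm{Re}\,\zeta|\le b'\}\subset\{a\le|\mathrm{Re}\,\zeta|\le b\}$). A direct check of the geometry shows $\gamma\cap\gamma'=\emptyset$ and that each component of $\gamma$ lies strictly inside the region enclosed (anti-clockwise) by the corresponding component of $\gamma'$. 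By Proposition \ref{projection for bisector}, $\psi_2(T)$ may be recomputed along $\gamma'$, giving
$$
\psi_1(T)\psi_2(T)=\frac{1}{(2\pi i)^2}\int_\gamma\!\int_{\gamma'}\psi_1(\zeta)\psi_2(\eta)(\zeta-T)^{-1}(\eta-T)^{-1}\,d\eta\,d\zeta.
$$
Next I would apply the resolvent identity $(\zeta-T)^{-1}(\eta-T)^{-1}=(\eta-\zeta)^{-1}\bigl[(\zeta-T)^{-1}-(\eta-T)^{-1}\bigr]$ and split via Fubini. For fixed $\zeta\in\gamma$, the point $\zeta$ is enclosed by the matching component of $\gamma'$, and closing off the unbounded horns $\gamma'^\pm$ by arcs of radius $P\to\infty$ (whose contribution vanishes thanks to $|\psi_2(\eta)|\le C|\eta|^{-\alpha_2}$) yields $\frac{1}{2\pi i}\int_{\gamma'}\frac{\psi_2(\eta)}{\eta-\zeta}\,d\eta=\psi_2(\zeta)$ via Cauchy's formula. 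For fixed $\eta\in\gamma'$, the point $\eta$ lies outside every component enclosed by $\gamma$, so the same closure argument gives $\frac{1}{2\pi i}\int_\gamma\frac{\psi_1(\zeta)}{\eta-\zeta}\,d\zeta=0$. Combining, $\psi_1(T)\psi_2(T)=\frac{1}{2\pi i}\int_\gamma\psi_1(\zeta)\psi_2(\zeta)(\zeta-T)^{-1}\,d\zeta=(\psi_1\psi_2)(T)$, noting that $\psi_1\psi_2\in\Theta(\Sigma)$ with decay exponent $\alpha_1+\alpha_2$.

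\textbf{Main obstacle.} The nontrivial point is justifying Fubini and the Cauchy identities along the unbounded horns $\gamma^\pm$, $\gamma'^\pm$. This rests on three ingredients: the decay $|\psi_j(\zeta)|\le C|\zeta|^{-\alpha_j}$ from $\Theta(\Sigma)$; the resolvent bound $\|(\lambda-T)^{-1}\|\le C/|\lambda|$ at infinity from Proposition \ref{Gen resolvent of DB}; and the geometric separation $|\eta-\zeta|\ge c(|\zeta|+|\eta|)$ for $\zeta\in\gamma^\pm$, $\eta\in\gamma'^\pm$ in matching sign components (guaranteed by $\nu<\nu'$, $b'<b$), with strictly positive lower bound in all other configurations. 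Combining these produces an integrable majorant for the double integral and makes the arc contributions at infinity decay like $P^{-\alpha_j}$, which legitimizes both Fubini and the Cauchy formulas.
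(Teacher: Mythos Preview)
Your proposal is correct and follows essentially the same route as the paper: choose two nested contours of the same type, apply the resolvent identity, and evaluate the two resulting pieces via Cauchy's integral formula (one giving $\psi_2(\zeta)$, the other vanishing). The only difference is cosmetic---the paper labels the inner and outer contours $\gamma_1,\gamma_2$ and omits the Fubini/arc-closure justification you spell out in your ``main obstacle'' paragraph, deferring instead to the standard bisectorial argument.
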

\begin{proof}
  For $0<r_1<r_2<R$, $0<\nu_1<\nu_2<\mu$ and $b_1>b_2>a$ such that
  $$
  \sigma(T)\cup \{\zeta\in\mathbb{C}: \; a<|\text{Re}\zeta|<b_1\}=\emptyset,
  $$
  we define two curves $\gamma_1$ and $\gamma_2$ as in \eqref{definition of gamma}. Note that $\gamma_1$ belongs to the interior of $\gamma_2$. Then
  \begin{equation*}
    (2\pi i)^2\psi_1(T)\psi_2(T)=\left(\int_{\gamma_1}\frac{\psi_1(\lambda)}{\lambda-T}d\lambda \right)\left(\int_{\gamma_2}\frac{\psi_2(\zeta)}{\zeta-T}d\zeta \right)
  \end{equation*}
  \begin{equation*}
    =\int_{\gamma_1}\int_{\gamma_2}\psi_1(\lambda)\psi_2(\zeta)\frac{1}{\zeta-\lambda}\left(\frac{1}{\lambda-T}-\frac{1}{\zeta-T}\right)d\zeta d\lambda
  \end{equation*}
  \begin{equation*}
     =\int_{\gamma_1}\frac{\psi_1(\lambda)}{\lambda-T}\left(\int_{\gamma_2}\frac{\psi_2(\zeta)}{\zeta-\lambda}d\zeta\right)d\lambda-
     \int_{\gamma_2}\left(\int_{\gamma_1}\frac{\psi_1(\lambda)}{\zeta-\lambda}d\lambda\right)\frac{\psi_2(\zeta)}{\zeta-T}d\zeta.
  \end{equation*}
  Using the Cauchy formula, we see that the second term vanishes and
  \begin{equation*}
    (2\pi i)^2\psi_1(T)\psi_2(T)=2\pi i\int_{\gamma_1}\frac{\psi_1(\lambda)}{\lambda-T}\psi_2(\lambda)d\lambda=(2\pi i)^2(\psi_1\psi_2)(T).
  \end{equation*}
\end{proof}

Next we prove the convergence lemma for the $\Theta(\Sigma)$ functional calculus.

\begin{prop}\label{convergent lemma1}
Let  $\psi_n,\psi\in \Theta(\Sigma)$ for $n\in\mathrm{N}$. Assume that $\psi_n\rightarrow \psi$ uniformly on compact subsets of $\Sigma$ and there exist $n$-independent constants $\alpha>0$, $C>0$ such that
\begin{equation*}
  |\psi_n(\zeta)|<\frac{C}{|\zeta|^{\alpha}}
\end{equation*}
for $\zeta\in \Sigma$. Then $\psi_n(T)\rightarrow \psi(T)$ in the operator norm.
\end{prop}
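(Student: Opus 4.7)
The plan is to argue by a standard dominated-convergence style estimate applied directly to the contour integral in Definition \ref{definition of psi(T)}. First I would fix a contour $\gamma=\gamma^-\cup\gamma^+\cup\gamma^0$ as in \eqref{definition of gamma} and, using the algebra statement of Proposition \ref{fg equal fg} (or simply linearity visible in \eqref{definition of psi(T)1}), write
\begin{equation*}
  \psi_n(T)-\psi(T)=\frac{1}{2\pi i}\int_{\gamma}\bigl(\psi_n(\zeta)-\psi(\zeta)\bigr)(\zeta-T)^{-1}\,d\zeta.
\end{equation*}
Since $\gamma$ does not meet $\sigma(T)$, the resolvent is controlled on the bounded piece $\gamma^0$ by a constant (continuity of $\zeta\mapsto(\zeta-T)^{-1}$ on the compact set $\gamma^0$), while on the unbounded piece $\gamma^{\pm}$ Proposition \ref{Gen resolvent of DB} yields $\|(\zeta-T)^{-1}\|\leq C/\mathrm{dist}(\zeta,S_{\omega,0})\leq C'/|\zeta|$ for $|\zeta|$ sufficiently large, because $\nu>\omega$ forces the contour to separate linearly from $S_{\omega,0}$.

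Next I would split the contour at a large radius $M>0$: set $\gamma_M^{\mathrm{in}}:=\gamma\cap\{|\zeta|\leq M\}$ and $\gamma_M^{\mathrm{out}}:=\gamma\cap\{|\zeta|>M\}$. Passing to the limit pointwise in the bound $|\psi_n(\zeta)|\leq C|\zeta|^{-\alpha}$ gives also $|\psi(\zeta)|\leq C|\zeta|^{-\alpha}$, so on $\gamma_M^{\mathrm{out}}$
\begin{equation*}
  \left\|\int_{\gamma_M^{\mathrm{out}}}\bigl(\psi_n(\zeta)-\psi(\zeta)\bigr)(\zeta-T)^{-1}\,d\zeta\right\|
  \leq C''\int_{M}^{\infty}\frac{1}{s^{\alpha+1}}\,ds=\frac{C''}{\alpha}\,M^{-\alpha},
\end{equation*}
uniformly in $n$. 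Given $\varepsilon>0$, choose $M$ so that the right-hand side is less than $\varepsilon/2$.

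The remaining piece $\gamma_M^{\mathrm{in}}$ is a compact subset of $\Sigma$, so the hypothesis gives $\psi_n\to\psi$ uniformly on it, while $\zeta\mapsto(\zeta-T)^{-1}$ is bounded on $\gamma_M^{\mathrm{in}}$ by some constant $K=K(M)$. Hence
\begin{equation*}
  \left\|\int_{\gamma_M^{\mathrm{in}}}\bigl(\psi_n(\zeta)-\psi(\zeta)\bigr)(\zeta-T)^{-1}\,d\zeta\right\|
  \leq K\,\ell(\gamma_M^{\mathrm{in}})\,\sup_{\zeta\in\gamma_M^{\mathrm{in}}}|\psi_n(\zeta)-\psi(\zeta)|
  \longrightarrow 0
\end{equation*}
as $n\to\infty$, so this term is less than $\varepsilon/2$ for $n$ large. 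Combining the two pieces yields $\|\psi_n(T)-\psi(T)\|<\varepsilon$ for all sufficiently large $n$, which is the desired operator-norm convergence. I do not anticipate a serious obstacle; the only care required is to ensure that the resolvent bound of Proposition \ref{Gen resolvent of DB} is used correctly near the eigenvalues on the imaginary axis, which is handled by the fact that $\gamma^0$ is a finite union of circles disjoint from $\sigma(T)$, so the resolvent is automatically bounded there.
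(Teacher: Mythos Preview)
Your proposal is correct and follows essentially the same approach as the paper: split the contour into a bounded piece where uniform convergence on compacta handles the estimate, and an unbounded tail where the uniform decay $|\psi_n(\zeta)|\leq C|\zeta|^{-\alpha}$ together with the resolvent bound $\|(\zeta-T)^{-1}\|\leq C'/|\zeta|$ makes the integral small independently of $n$. The paper partitions into three pieces ($\gamma^0$, $\gamma_{b,M}$, $\gamma_{M,\infty}$) rather than your two, but this is a cosmetic difference; your version is slightly tidier in absorbing $\gamma^0$ into the compact part and in noting explicitly that $|\psi(\zeta)|\leq C|\zeta|^{-\alpha}$ follows by passing to the limit.
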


\begin{proof}
  Let us fix $\varepsilon>0$. One can find an integer $m_1\in \mathrm{N}$ such that for any $n>m_1$,
  \begin{equation*}
    \left\|\int_{\gamma^0}\frac{\psi_n(\zeta)-\psi(\zeta)}{\zeta-T}d\zeta\right\|\leq C\|\psi_n-\psi\|_{L^{\infty}(\gamma^0)}\left\|\int_{\gamma^0}\frac{1}{\zeta-T}d\zeta\right\|\leq \frac{2\pi\varepsilon}{3}.
  \end{equation*}
  Let $\gamma_{p,q}:=\{\zeta\in \gamma: \; p\leq|\zeta|<q\}$, then we can fix $M>0$ such that
  \begin{equation*}
    \left\|\int_{\gamma_{M, \infty}}\frac{\psi_n(\zeta)-\psi(\zeta)}{\zeta-T}d\zeta\right\|\leq C\int_{M}^{+\infty}\frac{1}{r^{\alpha+1}}dr<\frac{2\pi\varepsilon}{3}.
  \end{equation*}
  Moreover, since $a>0$, there exists $m_2\in \mathrm{N}$ such that for any $n>m_2$,
  \begin{equation*}
    \left\|\int_{\gamma_{b, M}}\frac{\psi_n(\zeta)-\psi(\zeta)}{\zeta-T}d\zeta\right\|\leq C\|\psi_n-\psi\|_{L^{\infty}(\gamma_{b, M})}\left\|\int_{\gamma_{a, M}}\frac{1}{\zeta-T}d\zeta\right\|<\frac{2\pi\varepsilon}{3}.
  \end{equation*}
  By choosing $n>\max(m_1,m_2)$, we obtain $\|\psi_n(T)-\psi(T)\|<\varepsilon$.
\end{proof}

 The following proposition together with Proposition \ref{quadratic estimate} allow us to derive an $H^{\infty}(\Sigma)$ functional calculus from the $\Theta(\Sigma)$ functional calculus.

\begin{prop}\label{convergent lemma2}
  Let $\{f_j\}_{j=1}^{\infty}\subset\Theta(\Sigma)$ be a sequence such that $\|f_j\|_{L^{\infty}(\Sigma)}<C$ and $\|f_j(T)\|<C$ for all $j\in \mathrm{N}$ and some $C>0$. Assume $f\in H^{\infty}(\Sigma)$ and $f_j\rightarrow f$ uniformly on compact subsets of $\Sigma$. Then, for any $u\in \mathcal{H}$, the sequence $\{f_j(T)u\}_{j=1}^{\infty}$ is convergent in $\mathcal{H}$. Moreover, if $f(z)=1$ on $\Sigma$, then $f_j(T)u\rightarrow u$ in $\mathcal{H}$.
\end{prop}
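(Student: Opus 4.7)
The plan is a three-stage density argument. First, I would identify a subset of $\mathcal{H}$ on which convergence of $\{f_j(T) u\}$ reduces to a norm-convergence statement that we already have via Proposition \ref{convergent lemma1}; then use density together with the uniform operator bound $\|f_j(T)\| < C$ to extend convergence to all of $\mathcal{H}$.

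For the dense subset I would take $\mathcal{D} := \{\psi(T) v : \psi \in \Theta(\Sigma),\ v \in \mathcal{H}\}$. For $u = \psi(T) v \in \mathcal{D}$, the multiplicativity of the $\Theta(\Sigma)$-calculus (Proposition \ref{fg equal fg}) gives
\begin{equation*}
f_j(T) u = (f_j \psi)(T) v.
\end{equation*}
Because $\|f_j\|_{L^\infty(\Sigma)} \le C$ and $\psi \in \Theta(\Sigma)$ has polynomial decay $|\psi(z)| \le C_\psi |z|^{-\alpha}$, the products $f_j \psi$ satisfy the $j$-independent bound $|f_j \psi(z)| \le C\, C_\psi |z|^{-\alpha}$ on $\Sigma$ and converge to $f\psi$ uniformly on compact subsets. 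Hence Proposition \ref{convergent lemma1} yields $(f_j \psi)(T) \to (f\psi)(T)$ in operator norm, and $\{f_j(T) u\}$ converges for every $u \in \mathcal{D}$.

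The main obstacle is the density of $\mathcal{D}$ in $\mathcal{H}$. I would construct an $L^\infty(\Sigma)$-bounded sequence $\{\psi_n\} \subset \Theta(\Sigma)$ that converges to $1$ uniformly on compact subsets of $\Sigma$ and satisfies $\psi_n(T) w \to w$ for every $w \in \mathcal{H}$. A natural candidate is $\psi_n(z) = (1 + z^2/n^2)^{-1}$, which is holomorphic on $\Sigma$ once $n$ exceeds a threshold dictated by the geometry of $\Sigma^0$, has the required polynomial decay at infinity, and tends to $1$ pointwise. To prove $\psi_n(T) \to I$ strongly on $\mathcal{H}$, I would split $\mathcal{H}$ via the spectral projection onto the finite-dimensional generalized eigenspaces of the imaginary eigenvalues $\{\lambda_i^0\}$, where convergence is automatic, and on the complementary invariant subspace (on which the spectrum of $T$ is of bisectorial type, contained in $\Sigma^+ \cup \Sigma^-$) invoke the quadratic estimates of Proposition \ref{quadratic estimate} together with the classical bisectorial arguments of \cite{AlbrechtDuongMcIntosh}.

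Given density, a standard $\varepsilon/3$ argument proves $\{f_j(T) u\}$ is Cauchy for every $u \in \mathcal{H}$: choose $w \in \mathcal{D}$ with $\|u - w\| < \varepsilon/(3C)$, apply the uniform operator bound to the residual and the $\mathcal{D}$-convergence to the main term. For the moreover part, when $f \equiv 1$, for $u = \psi_n(T) v \in \mathcal{D}$ the first step gives $f_j(T) u = (f_j \psi_n)(T) v \to \psi_n(T) v = u$; combining this with the regularizer property $\psi_n(T) w \to w$ through the same $\varepsilon/3$ extension identifies the limit as $f_j(T) u \to u$ for every $u \in \mathcal{H}$.
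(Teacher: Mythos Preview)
Your overall strategy matches the paper's: reduce to a $\Theta(\Sigma)$ situation via multiplicativity, apply Proposition~\ref{convergent lemma1}, then extend by density and the uniform operator bound. The divergence is in the choice of dense subset, and there you take a substantial detour.

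The paper simply uses $\mathbf{D}(T)$. It is dense by Proposition~\ref{closed and densely}, and since $i\tau_1\in\rho(T)$ for any $\tau_1>\tau$, each $u\in\mathbf{D}(T)$ can be written $u=(i\tau_1-T)^{-1}v$ for some $v\in\mathcal{H}$. Setting $\psi_j(z):=f_j(z)/(i\tau_1-z)\in\Theta(\Sigma)$ gives $f_j(T)u=\psi_j(T)v$, and Proposition~\ref{convergent lemma1} applies immediately. No approximate identity is needed.

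Your plan for the density of $\mathcal{D}=\{\psi(T)v:\psi\in\Theta(\Sigma),\ v\in\mathcal{H}\}$ requires building $\psi_n\in\Theta(\Sigma)$ with $\psi_n(T)\to I$ strongly, which is essentially the ``moreover'' statement you are trying to prove. The proposed route---split off $\mathbf{R}(\Pi_0)$ and invoke bisectorial theory on $\mathbf{N}(\Pi_0)$---is not wrong in principle, but the appeal to ``classical bisectorial arguments of \cite{AlbrechtDuongMcIntosh}'' is where the argument goes soft: those arguments rest on full-range quadratic estimates, whereas Proposition~\ref{quadratic estimate} is truncated to $t\in(0,1/\tau)$, and the paper's own substitute for a reproducing formula (Lemma~\ref{P plus Q equal 1}) itself relies on the proposition you are proving. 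Untangling this is possible but is genuine additional work, not a citation.

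The one-line fix: the resolvent function $z\mapsto(i\tau_1-z)^{-1}$ belongs to $\Theta(\Sigma)$, so $\mathbf{D}(T)=\mathbf{R}\bigl((i\tau_1-T)^{-1}\bigr)\subset\mathcal{D}$, and density of $\mathcal{D}$ follows at once from Proposition~\ref{closed and densely}. With that observation your argument collapses to the paper's.
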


\begin{proof}
Let $\tau_1>\tau$ and $u\in \mathbf{D}(T)$. Since $i\tau_1\notin S_{\omega,\tau}$, there exists $v\in \mathcal{H}$ such that
$$
u=(i\tau_1-T)^{-1}v.
$$
Let $\psi(z)=f(z)\frac{1}{i\tau_1-z}$ and $\psi_j(z)=f_j(z)\frac{1}{i\tau_1-z}$ on $\Sigma$. By Proposition \ref{fg equal fg}, we see $f_j(T)u=\psi_j(T)v$, and therefore Proposition \ref{convergent lemma1} implies that $\{f_j(T)u\}_{j=1}^{\infty}$ converges to $\psi(T)v$ in $\mathcal{H}$.

Next, let $u\in \mathcal{H}$. Since $\mathbf{D}(T)$ is a dense set in $\mathcal{H}$, there exists a sequence $\{u_k\}_{k=1}^{\infty}\subset \mathbf{D}(T)$ converging to $u$ in $\mathcal{H}$. Thus
\begin{equation*}
  \|f_m(T)u-f_n(T)u\|\leq\|(f_m(T)-f_n(T))(u-u_k)\|+\|f_m(T)u_k-f_n(T)u_k\|
\end{equation*}

\begin{equation*}
  \leq2C\|u-u_k\|+\|(f_m(T)-f_n(T))u_k\|.
\end{equation*}
By choosing $k$ large enough and then letting $m,n\rightarrow\infty$, we conclude that $\{f_j(T)u\}_{j=1}^{\infty}$ is a Cauchy sequence.

Finally, if $f(z)=1$ on $\Sigma$ and $u\in \mathbf{D}(T)$, then arguments above imply that $f_j(T)u\rightarrow u$ in $\mathcal{H}$. For $u\in \mathcal{H}$, there exists a sequence $\{u_k\}_{k=1}^{\infty}\subset \mathbf{D}(T)$ converging to $u$ in $\mathcal{H}$. Thus
\begin{equation*}
  \|f_j(T)u-u\|=\|f_j(T)u-f_j(T)u_k\|+\|u_k-u\|+\|f_j(T)u_k-u_k\|.
\end{equation*}
By choosing $k$ large enough and then letting $j\rightarrow\infty$, we get $f_j(T)u\rightarrow u$ in $\mathcal{H}$.
\end{proof}

\begin{rem}\label{remark convergent lemma2}
  Note that we do not use the uniform boundedness of the sequence  $\left\{f_k(T)\right\}_{k=1}^{\infty}$ to prove the second part of Proposition \ref{convergent lemma2}.
\end{rem}

\begin{defn}
For an eigenvalue $\lambda\in \sigma(T)$, define the index of $\lambda$, as the smallest nonnegative integer $m$ such that
\begin{equation*}
  \mathbf{N}((\lambda-T)^{m})=\mathbf{N}((\lambda-T)^{m+1}).
\end{equation*}
\end{defn}

Next we prove that all purely imaginary eigenvalues of $T$ have finite index.

\begin{prop}
  The index $m_i$ of $\lambda_i^0$ is a finite number for $i=1,...,N$.
\end{prop}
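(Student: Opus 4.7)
The strategy is to invoke the classical Riesz--Fredholm spectral theory at an isolated eigenvalue, which applies because Proposition \ref{Abs.compact resolvent} guarantees that $T$ has compact resolvent and \eqref{constant R} guarantees that each $\lambda_i^0$ is isolated in $\sigma(T)$. Concretely, I would construct the Riesz spectral projection
\begin{equation*}
  P_i := \frac{1}{2\pi i}\oint_{|\zeta-\lambda_i^0|=r}(\zeta-T)^{-1}\,d\zeta,
\end{equation*}
show it is a bounded idempotent of finite rank, and identify its range with the generalized eigenspace $\bigcup_{m\ge 0}\mathbf{N}((\lambda_i^0-T)^m)$. Since this space would then be finite-dimensional, the increasing chain $\mathbf{N}((\lambda_i^0-T)^m)$ must stabilize at some finite $m=m_i$, which is the desired conclusion.

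The first two steps are routine. That $P_i$ is well-defined and independent of $r\in(0,R)$ follows from holomorphy of $(\zeta-T)^{-1}$ on a punctured neighbourhood of $\lambda_i^0$. That $P_i^2=P_i$ and $TP_i\subset P_i T$ is proved by the same contour/resolvent-identity manipulation used in Proposition \ref{fg equal fg}, applied to two concentric circles of radii $r_1<r_2$ around $\lambda_i^0$.

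The essential step, which uses the hypothesis specific to this paper, is to show that $P_i$ is compact. The first resolvent identity gives, for any fixed $\lambda\in\rho(T)$,
\begin{equation*}
  (\zeta-T)^{-1} = (\lambda-T)^{-1}+(\lambda-\zeta)(\zeta-T)^{-1}(\lambda-T)^{-1}.
\end{equation*}
Integrating and using $\oint d\zeta=0$ on the closed circle,
\begin{equation*}
  P_i = \left[\frac{1}{2\pi i}\oint_{|\zeta-\lambda_i^0|=r}(\lambda-\zeta)(\zeta-T)^{-1}\,d\zeta\right](\lambda-T)^{-1}.
\end{equation*}
The bracketed operator is bounded (a norm-convergent Bochner integral of a continuous operator-valued function over a compact curve), and $(\lambda-T)^{-1}$ is compact by Proposition \ref{Abs.compact resolvent}, so $P_i$ is compact. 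A compact idempotent necessarily has finite rank, so $\dim\mathbf{R}(P_i)<\infty$.

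The last step is to identify $\mathbf{R}(P_i)$ with the generalized eigenspace. The inclusion $\mathbf{N}((\lambda_i^0-T)^m)\subset \mathbf{R}(P_i)$ for every $m$ follows by evaluating the contour integral on any $u\in\mathbf{N}((\lambda_i^0-T)^m)$ via the Neumann expansion $(\zeta-T)^{-1}u = \sum_{k=0}^{m-1}(\zeta-\lambda_i^0)^{-k-1}(T-\lambda_i^0)^k u$ valid on $|\zeta-\lambda_i^0|=r$, which gives $P_i u = u$. Conversely, $T|_{\mathbf{R}(P_i)}$ is a linear operator on a finite-dimensional space whose spectrum (as restriction of $T$ to a spectral subspace) equals $\{\lambda_i^0\}$, hence $(T-\lambda_i^0)|_{\mathbf{R}(P_i)}$ is nilpotent, placing $\mathbf{R}(P_i)\subset\mathbf{N}((\lambda_i^0-T)^{\dim \mathbf{R}(P_i)})$. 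Combining both inclusions, the chain $\mathbf{N}((\lambda_i^0-T)^m)$ is contained in a fixed finite-dimensional space, so it stabilizes, and $m_i\le\dim\mathbf{R}(P_i)<\infty$. The only real subtlety is handling the unboundedness of $T$ when justifying the Neumann expansion and the spectral identification, but since the generalized eigenvectors lie in $\mathbf{D}(T^k)$ for all relevant $k$, these manipulations are legitimate.
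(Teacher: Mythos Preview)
Your proposal is correct and follows essentially the same approach as the paper: construct the Riesz spectral projection around $\lambda_i^0$, show it is a finite-rank idempotent via compactness of the resolvent (Proposition \ref{Abs.compact resolvent}), and conclude that the generalized eigenspaces are trapped in a finite-dimensional space. Your compactness argument via the first resolvent identity is a bit more explicit than the paper's appeal to Riemann sums of compact operators, and you supply the full identification $\mathbf{R}(P_i)=\bigcup_m\mathbf{N}((\lambda_i^0-T)^m)$ whereas the paper only states (and only needs) the inclusion $\mathbf{N}((\lambda_i^0-T)^m)\subset\mathbf{R}(\Pi_i)$.
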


\begin{proof}
  Let us set
  \begin{equation*}
    p_i(z)=\begin{cases}
             1, & \mbox{if } |z-\lambda_i^0|\leq R, \\
             0, & \mbox{otherwise}.
           \end{cases}
  \end{equation*}
  Since $p_i\in \Theta(\Sigma)$, we can define $\Pi_i:=p_i(T)$ for $i=1,...N$. By Proposition \ref{Abs.compact resolvent}, $(\lambda-T)^{-1}$ is a compact operator for all $\lambda\in \rho(T)$. Hence $\Pi_i$ is a compact operator as the Riemann sum of compact operators. Moreover, Proposition \ref{fg equal fg} implies that $\Pi_i$ is a projection. Therefore $\Pi_i$ is a finite rank operator and
  \begin{equation}\label{H splitting Pi}
    \mathcal{H}=\mathbf{N}(\Pi_i)\oplus\mathbf{R}(\Pi_i).
  \end{equation}

  Finally, for any integer $m>0$, we obtain $\mathbf{N}((\lambda_i^0-T)^m)\subset \mathbf{R}(\Pi_i)$. Therefore, the index of $\lambda_i^0$ is a finite number.
\end{proof}

We conclude this subsection with the following inequality, which will be used in Section \ref{Applications}.

\begin{prop}\label{algebraic multiplicity}
For fixed $i=1,...,N$, there exists a constant $C>0$ such that for all $h\in H^{\infty}(\Sigma)$ satisfying $h(z)=0$ for $z\notin \{\zeta\in\mathrm{C}: \; |\lambda_i^0-\zeta|<R\}$, the following estimate holds
\begin{equation*}
  \|h(T)\|\leq C\max_{0\leq j\leq m_i-1}|h^{(j)}(\lambda^{0}_{i})|.
\end{equation*}
\end{prop}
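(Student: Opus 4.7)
The plan is to exploit the fact that $h$ is supported inside the small disk around the isolated eigenvalue $\lambda_i^0$, so that $h$ automatically lies in $\Theta(\Sigma)$ and the Riesz--Dunford integral of Definition \ref{definition of psi(T)} applies directly. First I would observe that $h \cdot p_i = h$, where $p_i$ is the bump used to define $\Pi_i$, and invoke Proposition \ref{fg equal fg} to conclude $h(T) = h(T)\Pi_i$. Consequently $h(T)$ annihilates $\mathbf{N}(\Pi_i)$ and it suffices to analyse its action on the finite-dimensional invariant subspace $\mathbf{R}(\Pi_i)$.

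The second step is to identify $T|_{\mathbf{R}(\Pi_i)}$ with a finite-dimensional operator whose only eigenvalue is $\lambda_i^0$, and to verify that $(T-\lambda_i^0)^{m_i}\Pi_i = 0$. From the preceding proposition we already know $\mathbf{N}((T-\lambda_i^0)^k)\subset \mathbf{R}(\Pi_i)$ for every $k$; combining this with the fact that $(T-\lambda_i^0)|_{\mathbf{R}(\Pi_i)}$ is nilpotent of some finite index $k_i$ (being a finite rank operator with spectrum $\{0\}$) gives first $\mathbf{R}(\Pi_i) = \mathbf{N}((T-\lambda_i^0)^{k_i})$, and then a standard generalized-eigenvector chain argument (pulling back a non-zero vector through $T-\lambda_i^0$) shows that $\mathbf{N}((T-\lambda_i^0)^m)\neq \mathbf{N}((T-\lambda_i^0)^{m-1})$ for $1\leq m\leq k_i$, so that $k_i = m_i$.

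The main computation is then a Laurent-expansion of the resolvent. Since $h$ vanishes off the disk $|\zeta-\lambda_i^0|<R$, only the circle $\{|\zeta-\lambda_i^0|=r\}\subset \gamma^0$ contributes to the defining integral of $h(T)$. On the invariant subspace $\mathbf{R}(\Pi_i)$ the nilpotency of $(T-\lambda_i^0)\Pi_i$ yields the finite expansion
\begin{equation*}
(\zeta-T)^{-1}\Pi_i = \sum_{j=0}^{m_i-1} (\zeta-\lambda_i^0)^{-j-1}(T-\lambda_i^0)^j \Pi_i.
\end{equation*}
Substituting into the Dunford integral and using Cauchy's formula $\frac{1}{2\pi i}\oint_{|\zeta-\lambda_i^0|=r}\frac{h(\zeta)}{(\zeta-\lambda_i^0)^{j+1}}d\zeta = \frac{h^{(j)}(\lambda_i^0)}{j!}$ gives the key identity
\begin{equation*}
h(T) = h(T)\Pi_i = \sum_{j=0}^{m_i-1}\frac{h^{(j)}(\lambda_i^0)}{j!}\,(T-\lambda_i^0)^j \Pi_i.
\end{equation*}

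The conclusion is then a one-line triangle inequality: setting $C:=\sum_{j=0}^{m_i-1}\frac{1}{j!}\|(T-\lambda_i^0)^j\Pi_i\|$, which is finite and $h$-independent since $\Pi_i$ is a finite rank (hence bounded) operator and $T$ restricted to $\mathbf{R}(\Pi_i)$ is bounded, we obtain $\|h(T)\|\leq C\max_{0\leq j\leq m_i-1}|h^{(j)}(\lambda_i^0)|$. I expect the only genuinely technical point to be the justification that the index $m_i$ defined spectrally via stabilisation of kernels coincides with the nilpotency index on $\mathbf{R}(\Pi_i)$; once that identification is made, the resolvent truncation and Cauchy evaluation are routine.
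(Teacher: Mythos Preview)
Your proof is correct and follows essentially the same route as the paper: reduce to the finite-dimensional invariant subspace $\mathbf{R}(\Pi_i)$ via $h(T)=h(T)\Pi_i$, use the nilpotency $(T-\lambda_i^0)^{m_i}\Pi_i=0$ to obtain the finite Taylor expansion $h(T)\Pi_i=\sum_{j=0}^{m_i-1}\frac{h^{(j)}(\lambda_i^0)}{j!}(T-\lambda_i^0)^j\Pi_i$, and conclude by the triangle inequality. Your version is in fact more detailed, since you justify the equality $\mathbf{R}(\Pi_i)=\mathbf{N}((T-\lambda_i^0)^{m_i})$ (which the paper asserts without proof) and derive the expansion explicitly via the Laurent series of the resolvent and Cauchy's formula rather than appealing directly to the finite-dimensional holomorphic calculus.
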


\begin{proof}
  From the assumption, $h(T)u=h(T)\Pi_0u=0$ for $u\in \mathbf{N}(\Pi_{i})$. Therefore, due to \eqref{H splitting Pi}, it suffices to prove
  \begin{equation}\label{local functional boundness}
  \|h(T)v\|\leq C\max_{0\leq j< m_i}|h^{(j)}(\lambda^{0}_{i})|\|v\|
  \end{equation}
  for all $v\in \mathbf{R}(\Pi_{i})$ and some $C>0$.

  Since $\left.T\right|_{\mathbf{R}(\Pi_{i})}$ is bounded and $\mathbf{R}(\Pi_{i})=\mathbf{N}((\lambda-T)^{m_i})$, we obtain
  \begin{equation*}
    h(T)v=\sum_{k=0}^{m_i-1}\frac{h^{(k)}(\lambda_i^0)}{k!}(\lambda_i^0-T)^{k}v
  \end{equation*}
  for any $v\in \mathbf{R}(\Pi_{i})$. This implies \eqref{local functional boundness}.
\end{proof}

\subsection{The $\mathbf{H^{\infty}(\Sigma)}$ functional calculus} Here we prove that $T$ has a bounded $H^{\infty}(\Sigma)$ functional calculus. In order to do this, analogously to functional calculus for bisectorial operators, we need the following quadratic estimate.

\begin{prop}\label{quadratic estimate}
There exists a constant $C>0$ such that
\begin{equation}\label{quadratic estimate1}
\int_{0}^{\frac{1}{\tau}}\left\|\frac{tT}{1+t^2T^2}u\right\|^2\frac{dt}{t}\leq C\|u\|^2
\end{equation}
for all $u\in \mathcal{H}$.
\end{prop}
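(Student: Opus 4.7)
The plan is to reduce the desired quadratic estimate for $T$ to the classical one for the unperturbed bisectorial operator $T_1=D_1B$, via a resolvent-perturbation argument that exploits the boundedness of the zero-order piece $T_0=D_0B$. As the starting point, since $D_1$ is self-adjoint with closed range and $B$ is bounded and strictly accretive, $D_1B$ is bisectorial of angle $\omega$ on $L_2(\Omega;\mathrm{C}^M)$ with spectrum contained in $S_{\omega,0}$; see [\cite{AAM}, Proposition 3.3]. The main quadratic estimate of \cite{AlbrechtDuongMcIntosh} (equivalently \cite{AKM}) then supplies
\begin{equation*}
  \int_0^{\infty}\|\psi_t(D_1B)u\|^2\,\frac{dt}{t}\leq C\|u\|^2,\qquad u\in L_2(\Omega;\mathrm{C}^M),
\end{equation*}
where $\psi_t(z):=tz/(1+t^2z^2)$.

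Next, for $u\in\mathcal{H}$ and $t\in(0,1/\tau)$, the points $\pm i/t$ lie outside $S_{\omega,\tau}$, hence in the resolvent set of $T$ by Proposition \ref{Gen resolvent of DB} and of $D_1B$ by bisectoriality. Since $\mathcal{H}=\mathbf{R}(D)$ is invariant under $DB$ (because $DBv\in\mathbf{R}(D)$ for every $v\in\mathbf{D}(DB)$), one checks that the resolvent $(1\mp itT)^{-1}$ on $\mathcal{H}$ coincides with the restriction of the $L_2$ resolvent $(1\mp it\,DB)^{-1}$ to $\mathcal{H}$. Combining the partial fraction $\psi_t(z)=\frac{1}{2i}[(1-itz)^{-1}-(1+itz)^{-1}]$ with the resolvent identity $A^{-1}-A_0^{-1}=A^{-1}(A_0-A)A_0^{-1}$ then yields the perturbation formula
\begin{equation*}
  \psi_t(T)u-\psi_t(D_1B)u=\tfrac{t}{2}\left[(1-itT)^{-1}T_0(1-itD_1B)^{-1}+(1+itT)^{-1}T_0(1+itD_1B)^{-1}\right]u.
\end{equation*}
Proposition \ref{Gen resolvent of DB} gives $\|(1\pm itT)^{-1}\|\leq C$ uniformly for $t\in(0,1/\tau)$; bisectoriality of $D_1B$ gives $\|(1\pm itD_1B)^{-1}\|\leq C$ for all $t>0$; and $T_0$ is bounded. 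Hence
\begin{equation*}
  \|\psi_t(T)u-\psi_t(D_1B)u\|\leq Ct\|u\|,\qquad u\in\mathcal{H},\ t\in(0,1/\tau).
\end{equation*}

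The conclusion then follows from the triangle inequality:
\begin{equation*}
  \int_0^{1/\tau}\|\psi_t(T)u\|^2\tfrac{dt}{t}\leq 2\int_0^{\infty}\|\psi_t(D_1B)u\|^2\tfrac{dt}{t}+2C^2\!\int_0^{1/\tau}\! t\,dt\,\|u\|^2\leq C\|u\|^2.
\end{equation*}
The main obstacle is the careful bookkeeping to compare operators defined on different ambient spaces, since $T$ acts on $\mathcal{H}$ while $D_1B$ naturally acts on $L_2$; the perturbation identity must be interpreted in a common space, which is handled by the invariance of $\mathcal{H}$ under $DB$ together with the topological splitting of Proposition \ref{Gen splitting for L2}. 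Beyond that, the argument is a standard perturbation of the AKM quadratic estimate by a bounded zero-order term, paid for by the restriction to the interval $t\in(0,1/\tau)$, which is precisely the range on which the perturbed resolvents remain uniformly bounded.
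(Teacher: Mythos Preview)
Your proof is correct and follows essentially the same route as the paper: reduce to the known quadratic estimate for the bisectorial operator $D_1B$ via the resolvent identity, using that the perturbation $T_0=D_0B$ is bounded and that the resolvents $(1\pm itT)^{-1}$, $(1\pm itD_1B)^{-1}$ are uniformly bounded on the range $t\in(0,1/\tau)$. The paper's computation is the same partial-fraction/resolvent-difference argument, written with $T_1=\left.D_1B\right|_{\mathcal H}$ in place of your $D_1B$ on $L_2$; your explicit remark about comparing operators on $\mathcal H$ versus $L_2$ is a welcome clarification of a point the paper leaves implicit.
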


\begin{proof}
Note that $\pm\frac{i}{t}\notin S_{\omega,\tau}$ for $t\in(0,\tau)$. Hence, by Proposition \ref{Gen resolvent of DB}, we obtain
$$
\|(1+itT)^{-1}-(1+itT_1)^{-1}\|=\|(1+itT)^{-1}(tT_0)(1+itT_1)^{-1}\|\leq C|t|.
$$
Thus
\begin{equation}\label{quadratic estimate2}
  \|tT(1+t^2T^2)^{-1}-tT_1(1+t^2T_1^2)^{-1}\|
\end{equation}

$$
=\frac{1}{2i}\|(1+itT)^{-1}-(1-itT)^{-1}+(1+itT_1)^{-1}-(1-itT_1)^{-1}\|\leq C|t|.
$$
The quadratic estimate \eqref{quadratic estimate1} for $T_1$ was proved in [\cite{AKM},Theorem 3.1]. Therefore \eqref{quadratic estimate2} implies \eqref{quadratic estimate1}.
\end{proof}

Next we prove the following auxiliary lemma.

\begin{lem}\label{P plus Q equal 1}
Let $P$, $Q$ be the operators defined by
$$
Pu=\frac{\tau^2}{\tau^2+T^2}u
\quad
\text{and}
\quad
Qu=2\int_{0}^{\frac{1}{\tau}}\left(sT\frac{1}{1+s^2T^2}\right)^2u\frac{ds}{s}
$$
for $u\in \mathcal{H}$. Then the following identity
$$
(P+Q)u=u
$$
holds for $u\in \mathcal{H}$.
\end{lem}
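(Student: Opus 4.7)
The strategy is to recognise the integrand in $Qu$ as an exact $s$-derivative and to combine this with the $\Theta(\Sigma)$ functional calculus. At the symbol level, writing $\psi_s(z):=sz/(1+s^2z^2)$, a direct computation gives
$$
\frac{2}{s}\psi_s(z)^2 = \frac{2sz^2}{(1+s^2z^2)^2} = -\frac{d}{ds}\left(\frac{1}{1+s^2z^2}\right),
$$
so the antiderivative of the scalar integrand is $-(1+s^2z^2)^{-1}$. I would lift this to the operator level and apply the fundamental theorem of calculus together with a limit in $\epsilon$.

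For each fixed $s\in(0,1/\tau]$, the symbol $(1+s^2\zeta^2)^{-1}$ belongs to $\Theta(\Sigma)$ since its poles $\pm i/s$ lie outside $\Sigma$. Hence, by Definition \ref{definition of psi(T)}, $g(s):=(1+s^2T^2)^{-1}$ is a well-defined bounded operator on $\mathcal{H}$. The plan is to show that $s\mapsto g(s)$ is continuously differentiable in operator norm on every compact subinterval of $(0,1/\tau]$, with
$$
g'(s) = -2sT^2(1+s^2T^2)^{-2} = -\tfrac{2}{s}\psi_s(T)^2.
$$
This is obtained by differentiating under the Dunford integral, which is permissible because $\partial_s(1+s^2\zeta^2)^{-1}$ is uniformly bounded on $\gamma$ for $s$ in a compact subinterval while $\|(\zeta-T)^{-1}\|$ is integrable along $\gamma$ by Proposition \ref{Gen resolvent of DB}.

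Applying the fundamental theorem of calculus to $g$ on $[\epsilon,1/\tau]$ for $0<\epsilon<1/\tau$ then yields
$$
Q_\epsilon u := 2\int_\epsilon^{1/\tau}\psi_s(T)^2 u\,\frac{ds}{s} = g(\epsilon)u - g(1/\tau)u = f_\epsilon(T)u - Pu,
$$
where $f_\epsilon(z):=(1+\epsilon^2z^2)^{-1}$. Letting $\epsilon\to 0^+$, the family $\{f_\epsilon\}\subset\Theta(\Sigma)$ is uniformly bounded on $\Sigma$ and converges to the constant function $1$ uniformly on compact subsets of $\Sigma$. The second part of Proposition \ref{convergent lemma2}, together with Remark \ref{remark convergent lemma2}, therefore gives $f_\epsilon(T)u\to u$ in $\mathcal{H}$. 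Consequently $Q_\epsilon u\to u-Pu$ in $\mathcal{H}$, which simultaneously shows that the improper integral defining $Qu$ converges and identifies its value, establishing $(P+Q)u=u$.

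The main technical point is the differentiation of $g$ under the Dunford integral and the uniform $L^\infty(\Sigma)$ control of $\{f_\epsilon\}_\epsilon$. Both reduce to the geometric observation that the poles $\pm i/\epsilon$, which lie on the imaginary axis with $|\mathrm{Im}|\geq\tau$, stay bounded away from $\Sigma$: the imaginary-axis portion of $\Sigma$ is contained in the disks of radius $R<a$ about the purely imaginary eigenvalues $\lambda_i^0\in S_{\omega,\tau}$, while the remaining components $\Sigma^\pm$ satisfy $|\mathrm{Re}\,z|>a$.
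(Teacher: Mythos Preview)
Your argument is correct and takes a cleaner route than the paper. The paper discretises the integral defining $Q$ into Riemann sums, assembles the symbol
\[
f_m(z)=\frac{\tau^2}{\tau^2+z^2}+2\sum_{j=1}^{m}\frac{1}{j}\,\psi_{j/(\tau m)}(z)^2,
\]
argues by contradiction that $f_m\to 1$ uniformly on compacta, invokes Proposition~\ref{convergent lemma2} and Remark~\ref{remark convergent lemma2} to get $f_m(T)u\to u$, and then asserts that the Riemann sums of operators converge to $Pu+Qu$. You instead exploit the elementary antiderivative identity $\tfrac{2}{s}\psi_s(z)^2=-\partial_s(1+s^2z^2)^{-1}$ to obtain the exact formula $Q_\epsilon u=(1+\epsilon^2T^2)^{-1}u-Pu$ via the fundamental theorem of calculus, and then appeal to the same Proposition~\ref{convergent lemma2}/Remark~\ref{remark convergent lemma2} with the simpler family $f_\epsilon(z)=(1+\epsilon^2z^2)^{-1}\to 1$. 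This avoids both the Riemann-sum bookkeeping and the separate verification that the operator sums converge to the integral.

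One small imprecision: you write that $\|(\zeta-T)^{-1}\|$ is integrable along $\gamma$, but Proposition~\ref{Gen resolvent of DB} only gives $O(|\zeta|^{-1})$ on the unbounded rays of $\gamma^\pm$, which is not integrable by itself. What makes the dominated-convergence argument work is that $\partial_s(1+s^2\zeta^2)^{-1}=O(|\zeta|^{-2})$ uniformly for $s$ in compact subintervals of $(0,1/\tau]$, so the product is $O(|\zeta|^{-3})$ along $\gamma$. With that correction the differentiation under the Dunford integral is fully justified and the proof stands.
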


\begin{proof}
  Let us consider the functions
  \begin{equation*}
    f_m(z)=\frac{\tau^2}{\tau^2+z^2}+2\sum_{j=1}^{m}\frac{1}{j}\frac{\left(\frac{j}{\tau m}z\right)^2}{\left(1+\left(\frac{j}{\tau m}z\right)^2\right)^2}.
  \end{equation*}
  Observe that $f_m\rightarrow 1$ pointwise on $\Sigma$. Actually, $\{f\}_{m=1}^{\infty}$ converges uniformly on compact subsets of $\Sigma$. Indeed, assume there exist a compact subset $K\subset \Sigma$ and $\{x_k\}_{k=1}^{\infty}\subset K$ such that
  \begin{equation*}
    |f_m(x_m)-1|>c
  \end{equation*}
  for some $c>0$. Since $K$ is compact, without lost of generality we assume that $x_m\rightarrow x$ for some $x\in K$. Then
  \begin{equation*}
    c<|f_m(x_m)-1|<|f_m(x)-1|+|f_m(x_m)-f_m(x)|.
  \end{equation*}
  The first term tends to $0$, because of pointwise convergence. To estimate the second term, let us note that $\mathrm{dist}(i\tau, \Sigma)>0$, and hance there exists $C>0$ such that
  \begin{equation*}
    \left|\frac{1}{1+(\alpha z)^2}\right|<C
  \end{equation*}
  for any $\alpha\in [0,\frac{1}{\tau}]$, $z\in \Sigma$. Therefore, straightforward calculations give
  \begin{equation*}
    |f_m(x_m)-f_m(x)|\leq \sum_{j=1}^{m}\frac{1}{j}\left(\frac{j}{\tau m}\right)^2C|x-x_m|\leq C|x-x_m|.
  \end{equation*}
  This contradicts with our assumption, that is $f_m\rightarrow 1$ uniformly on compact subsets of $\Sigma$.
  Therefore Proposition \ref{convergent lemma2} and Remark \ref{remark convergent lemma2} imply that
  \begin{equation}\label{P plus Q equal 11}
    f_m(T)u\rightarrow u
  \end{equation}
  for all $u\in \mathcal{H}$.

  On the other hand, Proposition \ref{fg equal fg} yields
  \begin{equation*}
    f_m(T)u=\frac{\tau^2}{\tau^2+T^2}+2\sum_{j=1}^{m}\frac{1}{j}\left(\frac{j}{\tau m}T\left(1+\left(\frac{j}{\tau m}T\right)^2\right)^{-1}\right)^2u
  \end{equation*}
  for each $u\in \mathcal{H}$, and therefore
  \begin{equation*}
    f_m(T)u\rightarrow Pu+Qu.
  \end{equation*}
  Hence, due to \eqref{P plus Q equal 11}, we derive $Pu+Qu=u$.
\end{proof}

Now we prove that $T$ has a bounded $H^{\infty}(\Sigma)$ functional calculus . The main idea is contained in \cite{BandaraRosen}, \cite{AuscherAxelsson2011}.

\begin{thm}\label{functional calculus}
  There exists a constant $C>0$ such that the following estimate
  $$
  \|f(T)\|\leq C\|f\|_{\infty}
  $$
  holds for all $f\in \Theta(\Sigma)$.
\end{thm}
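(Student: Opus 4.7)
The plan is to combine the resolution of identity $I = P + Q$ from Lemma \ref{P plus Q equal 1} with the quadratic estimate of Proposition \ref{quadratic estimate}, together with a duality argument using the analogous quadratic estimate for $T^*$. First I would split $u = Pu + Qu$ and handle the two summands separately. For the $Pu$ term, note that $f(T)Pu = (f\eta)(T)u$ with $\eta(z) = \tau^2/(\tau^2+z^2)\in\Theta(\Sigma)$. Since $|f(\zeta)\eta(\zeta)|\leq\|f\|_\infty \tau^2/(\tau^2+|\zeta|^2)$, the contour integral in Definition \ref{definition of psi(T)} combined with the resolvent bound from Proposition \ref{Gen resolvent of DB} gives $\|f(T)Pu\|\leq C\|f\|_\infty\|u\|$: on $\gamma^0$ the integrand is uniformly bounded on a compact set, while on $\gamma^\pm$ it decays like $\|f\|_\infty/|\zeta|^3$ as $|\zeta|\to\infty$.

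For the $Qu$ term I would dualize. Setting $\psi_s(z) = sz/(1+s^2z^2)$ and commuting the $\Theta(\Sigma)$-calculus via Proposition \ref{fg equal fg}, for any $v\in\mathcal{H}$ one gets
\[
\langle f(T)Qu, v\rangle = 2\int_0^{1/\tau}\bigl\langle \psi_s(T)u,\,(f\psi_s)(T)^* v\bigr\rangle\frac{ds}{s}.
\]
Cauchy--Schwarz bounds this by the product of $\bigl(\int_0^{1/\tau}\|\psi_s(T)u\|^2\,ds/s\bigr)^{1/2}$, which is $\leq C\|u\|$ by Proposition \ref{quadratic estimate}, and the dual square function $\bigl(\int_0^{1/\tau}\|(f\psi_s)(T)^* v\|^2\,ds/s\bigr)^{1/2}$. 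Thus the theorem reduces to the dual square-function bound
\[
\int_0^{1/\tau}\|(f\psi_s)(T)^* v\|^2\frac{ds}{s}\leq C\|f\|_\infty^2\|v\|^2.
\]

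To prove the dual estimate, observe that $T^* = B^*D_1^* + B^*D_0^*$ has the same structure as $T$ (with $B^*$ accretive and $D_1^*$ self-adjoint), so Lemma \ref{P plus Q equal 1} and Proposition \ref{quadratic estimate} apply equally well to $T^*$. Applying the dual partition of unity to $v$, commuting via Proposition \ref{fg equal fg}, and using Fubini together with Schur's test, the dual estimate reduces to an off-diagonal operator bound of the form
\[
\|(f\psi_s\psi_t)(T)\|_{\mathrm{op}}\leq C\|f\|_\infty K(s,t),\qquad K(s,t) = \min(s/t,t/s)^\alpha
\]
for some $\alpha>0$. On the sectoral pieces $\gamma^\pm$ this bound follows by direct contour estimation using $|\psi_r(\zeta)|\leq C\min(r|\zeta|,1/(r|\zeta|))$ together with $\|(\zeta-T)^{-1}\|\leq C/|\zeta|$; the kernel $K$ is standard Schur-admissible, so the dual quadratic estimate for $T^*$ closes the argument.

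The main obstacle I expect is the contribution from the compact component $\gamma^0$ of the contour, which encircles the finitely many purely imaginary eigenvalues $\{\lambda_i^0\}$: there neither the off-diagonal decay in $s,t$ nor large-$|\zeta|$ smallness of the resolvent is available, so the Schur kernel argument does not apply directly. One handles this by isolating the corresponding finite-rank spectral projections $\Pi_i = p_i(T)$ (already constructed in the $\Theta(\Sigma)$-calculus) and arguing that on their ranges $f(T)$ is controlled by Proposition \ref{algebraic multiplicity} with a uniform $\|f\|_\infty$ bound, while on the complementary subspaces one may replace $\gamma^0$ by a suitable deformation that keeps only the sectoral contributions. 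Combining the $\gamma^\pm$-Schur estimate with this finite-rank correction yields the dual bound and completes the proof.
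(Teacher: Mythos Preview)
Your approach is correct and follows essentially the same strategy as the paper: use the resolution $I=P+Q$ from Lemma~\ref{P plus Q equal 1}, bound the $P$-contributions directly from the contour integral, and control the $Q$-contributions through a Schur-kernel estimate combined with the quadratic estimates of Proposition~\ref{quadratic estimate} for $T$ and its adjoint. The paper organises this a little more efficiently by applying the splitting symmetrically on both sides of the pairing at once, writing $(v,f(T)u)=(v,(P+Q)f(T)(P+Q)u)$; this lands directly on the double integral governed by the kernel $\|\psi_s(T)f(T)\psi_t(T)\|$ and avoids your intermediate reduction to the dual square function $\int_0^{1/\tau}\|(f\psi_s)(T)^*v\|^2\,ds/s$, which you then have to unwind by a second application of the same splitting.

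Your concern about the compact contour $\gamma^0$, and the proposed finite-rank detour through Proposition~\ref{algebraic multiplicity}, are unnecessary. On $\gamma^0$ both $|\zeta|$ and $\|(\zeta-T)^{-1}\|$ are bounded above and below, so $|\psi_r(\zeta)|=|r\zeta/(1+r^2\zeta^2)|\le Cr$ for all $r\in(0,1/\tau)$; hence the $\gamma^0$-contribution to $\|(f\psi_s\psi_t)(T)\|$ is at most $C\|f\|_\infty\,st$. The key point (and the reason the paper restricts the $t$-integration to $(0,1/\tau)$ throughout) is that on this bounded range one has
\[
st \;=\; \max(s,t)^2\cdot\min\bigl(\tfrac{s}{t},\tfrac{t}{s}\bigr)\;\le\;\tau^{-2}\,\min\bigl(\tfrac{s}{t},\tfrac{t}{s}\bigr),
\]
which is already a Schur-admissible kernel on $L^2((0,1/\tau),dt/t)$. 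Thus the $\gamma^0$ piece is absorbed into the same $K(s,t)$ as the sectoral rays, and the Schur argument goes through uniformly over all of $\gamma$ with no separate treatment of the imaginary eigenvalues.
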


\begin{proof}
  Let $\psi_t(z)=\frac{tz}{1+t^2z^2}\in \Theta(\Sigma)$ and $P$, $Q$ be the operators as in Lemma \ref{P plus Q equal 1}. Then, for $v,u\in \mathcal{H}$,
  $$
  |(v,f(T)u)|=|(v,(P+Q)f(T)(P+Q)u)|
  $$
  $$
  \leq|(v,Pf(T)Pu)|+|(v,(I-P)f(T)Pu)|
$$
$$
  +|(v,Pf(T)(I-P)u)|+|(v,Qf(T)Qu)|
  $$
  $$
  \leq 3|(v,Pf(T)Pu)|+2|(v,Pf(T)u)|+|(v,Qf(T)Qu)|.
  $$
  We estimate each summand separately. For the first two terms, using Proposition \ref{fg equal fg}, we obtain
  $$
  |(v,Pf(T)Pu)|\leq \|v\|\|u\|\left\|\int_{\gamma}\frac{\tau^4f(z)}{(\tau^2+z^2)^2}(z-T)^{-1}dz\right\|
  $$
  $$
  \leq C\|v\|\|u\|\|f\|_{\infty}
  $$
  and
  $$
  |(v,Pf(T)u)|\leq \|v\|\|u\|\left\|\int_{\gamma}\frac{\tau^2f(z)}{\tau^2+z^2}(z-T)^{-1}dz\right\|
  $$
  $$
  \leq C\|v\|\|u\|\|f\|_{\infty}.
  $$

  To estimate the last term, we note
  $$
  \|\psi_{s}(T)f(T)\psi_{t}(T)\|\leq \|f\|_{\infty}\int_{0}^{+\infty}\frac{stx^2}{(1+s^2x^2)(1+t^2x^2)}\frac{dx}{x}
  $$
  $$
  \leq \|f\|_{\infty}\min\left(\left(\frac{t}{s}\right)^{\alpha},\; \left(\frac{s}{t}\right)^{\alpha}\right)\left(1+\left|\log\left(\frac{t}{s}\right)\right|\right)
  $$
  for $t,s\in (0,\frac{1}{\tau})$ and some $\alpha>0$. Denote $\eta(x)=\min \left(x^{\alpha},\;x^{-\alpha}\right)\left(1+\left|\log x\right|\right)$. Then
  $$
  |(v,Qf(T)Qu)|\leq C\int_{0}^{\frac{1}{\tau}}\int_{0}^{\frac{1}{\tau}}
  \|\psi_{s}^*(T)v\|\|\psi_{s}(T)f(T)\psi_{t}(T)\|\|\psi_{t}(T)u\|\frac{dt}{t}
  \frac{ds}{s}
  $$
  $$
  \leq C\|f\|_{\infty}\int_{0}^{\frac{1}{\tau}}\int_{0}^{\frac{1}{\tau}}
  \|\psi_{s}^*(T)v\|\|\psi_{t}(T)u\|\eta\left(\frac{t}{s}\right)\frac{dt}{t}
  \frac{ds}{s}.
  $$
  The Cauchy-Schwartz inequality yields
  $$
  |(v,Qf(T)Qu)|^2\leq
  C\|f\|_{\infty}^2\left(\int_{0}^{\frac{1}{\tau}}\|\psi_{s}^*(T)v\|^2\left(\int_{0}^{\frac{1}{\tau}}\eta\left(\frac{t}{s}\right)\frac{dt}{t}\right)\frac{ds}{s}\right)\times
  $$
  \begin{flushright}
    $$
    \times\left(\int_{0}^{\frac{1}{\tau}}\|\psi_{t}(T)u\|^2\left(\int_{0}^{\frac{1}{\tau}}\eta\left(\frac{t}{s}\right)\frac{ds}{s}\right)\frac{dt}{t}\right).
    $$
  \end{flushright}
  Finally, using the quadratic estimate from Proposition \ref{quadratic estimate}, we get
  $$
  |(v,Qf(T)Qu)|\leq C\|f\|_{\infty}\|u\|\|v\|.
  $$
  \end{proof}

Now we are on a position to introduce the following $H^{\infty}(\Sigma)$ functional calculus for the operator $T$.

\begin{defn}\label{definition of f(T)}
Let $f\in H^{\infty}(\Sigma)$ and $\{\psi_i\}_{i=1}^{\infty}\subset \Theta(\Sigma)$ be an uniformly bounded sequence such that $\psi_i \rightarrow f$ uniformly on compact subsets of $\Sigma$. We define
\begin{equation*}
  f(T)u=\lim_{i\rightarrow \infty}\psi_i(T)u
\end{equation*}
for $u\in \mathcal{H}$.
\end{defn}

By Proposition \ref{convergent lemma2}, the definition of $f(T)$ is independent of the choice of sequence $\{\psi_i\}_{i=1}^{\infty}$. Also observe that the sequence $\{\frac{im}{im+z}f(z)\}_{m=1}^{\infty}\subset \Theta(\Sigma)$ converges to $f$ uniformly on compact subsets of $\Sigma$ for $f\in H^{\infty}(\Sigma)$. Therefore Proposition \ref{functional calculus} implies that we have a well defined bounded operator $f(T)$ on $\mathcal{H}$ for any $f\in H^{\infty}(\Sigma)$.

Proposition \ref{convergent lemma2} also shows that Definition \ref{definition of f(T)} agrees with Definition \ref{definition of psi(T)} for functions in $\Theta(\Sigma)$.

Let us consider the basic properties of the $H^{\infty}(\Sigma)$ functional calculus. First we prove that the map given by Definition \ref{definition of f(T)} is an algebra homomorphism.

\begin{prop}\label{fg equal fg H class}
  Let $f$, $g\in H^{\infty}(\Sigma)$. Then
  \begin{equation*}
    f(T)+g(T)=(f+g)(T)
  \end{equation*}
  and
  \begin{equation*}
    f(T)g(T)=(fg)(T).
  \end{equation*}
\end{prop}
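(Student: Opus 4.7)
The plan is to reduce the $H^\infty(\Sigma)$ identities to the corresponding $\Theta(\Sigma)$ identities from Proposition \ref{fg equal fg} by approximation. Fix uniformly bounded sequences $\{\psi_i\}, \{\phi_i\}\subset \Theta(\Sigma)$, with $M_f:=\sup_i\|\psi_i\|_\infty$, $M_g:=\sup_i\|\phi_i\|_\infty$, such that $\psi_i\to f$ and $\phi_i\to g$ uniformly on compact subsets of $\Sigma$. Then $\{\psi_i+\phi_i\}$ and $\{\psi_i\phi_i\}$ both lie in $\Theta(\Sigma)$ (the product of two functions with polynomial decay has doubled decay), are uniformly bounded by $M_f+M_g$ and $M_fM_g$ respectively, and converge uniformly on compact subsets of $\Sigma$ to $f+g$ and $fg$. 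So each serves as an admissible defining sequence in Definition \ref{definition of f(T)}.

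For the sum, Proposition \ref{fg equal fg} gives $(\psi_i+\phi_i)(T)u=\psi_i(T)u+\phi_i(T)u$ for $u\in\mathcal{H}$. Passing to the limit, the left side converges to $(f+g)(T)u$ by Definition \ref{definition of f(T)}, and the right side converges to $f(T)u+g(T)u$, yielding additivity.

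For the product, Proposition \ref{fg equal fg} gives $(\psi_i\phi_i)(T)u=\psi_i(T)\phi_i(T)u$, and the left side converges to $(fg)(T)u$ by Definition \ref{definition of f(T)}. It remains to show $\psi_i(T)\phi_i(T)u\to f(T)g(T)u$. I would split
\begin{equation*}
  \psi_i(T)\phi_i(T)u - f(T)g(T)u = \psi_i(T)\bigl[\phi_i(T)u-g(T)u\bigr] + \bigl[\psi_i(T)-f(T)\bigr]g(T)u.
\end{equation*}
The second term tends to zero by definition of $f(T)$ applied to the vector $g(T)u\in\mathcal{H}$. For the first term I use the operator-norm bound $\|\psi_i(T)\|\leq C\|\psi_i\|_\infty\leq CM_f$ provided by Theorem \ref{functional calculus}, combined with the strong convergence $\phi_i(T)u\to g(T)u$.

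The only real technical point is this uniform operator-norm bound on $\psi_i(T)$: without Theorem \ref{functional calculus}, the mere uniform boundedness of $\{\psi_i\}$ in sup norm would not allow commuting the limit with $\psi_i(T)$ in the first term above. Once that bound is in hand, the argument is routine passage to the limit.
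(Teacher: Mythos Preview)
Your proof is correct and follows essentially the same approach as the paper: approximate by $\Theta(\Sigma)$ functions, invoke the $\Theta$-homomorphism property from Proposition \ref{fg equal fg}, and pass to the limit using the uniform operator bound from Theorem \ref{functional calculus}. The only cosmetic difference is that the paper organizes the product argument as an iterated limit $\lim_j\lim_i f_i(T)g_j(T)u$ (first fixing $g_j$, so that $f_ig_j$ has a uniform decay bound and Proposition \ref{convergent lemma1} applies) rather than your diagonal limit with a telescoping split, but the key ingredients and the reliance on Theorem \ref{functional calculus} are identical.
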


\begin{proof}
  Let $f$, $g\in H^{\infty}(\Sigma)$ and $\{f_j\}_{j=1}^{\infty}$, $\{g_j\}_{j=1}^{\infty}\subset\Theta(\Sigma)$ be the corresponding sequences, see Definition \ref{definition of f(T)}. Then $\{fg_j\}_{j=1}^{\infty}\subset\Theta(\Sigma)$ is uniformly bounded and $fg_j\rightarrow fg$ on compact subsets of $\Sigma$. Therefore
  \begin{equation}\label{fg equal fg H class1}
    (fg)(T)u=\lim_{j\rightarrow \infty}(fg_j)(T)u
  \end{equation}
  for each $u\in \mathcal{H}$. Similarly, for a fixed $j$, we see that $f_ig_j\rightarrow fg_j$ on compact subset of $\Sigma$, so that
  \begin{equation}\label{fg equal fg H class2}
    (fg_j)(T)u=\lim_{i\rightarrow\infty}(f_ig_j)(T)u
  \end{equation}
  for any $u\in \mathcal{H}$. Finally, Proposition \ref{fg equal fg} and \eqref{fg equal fg H class1}, \eqref{fg equal fg H class2} give
  \begin{equation*}
    (fg)(T)u=\lim_{j\rightarrow \infty}\left(\lim_{i\rightarrow\infty}(f_ig_j)(T)u\right)=
    \lim_{j\rightarrow \infty}\left(\lim_{i\rightarrow\infty}\left(f_i(T)g_j(T)u\right)\right)
  \end{equation*}

  \begin{equation*}
    =\lim_{j\rightarrow \infty}\left(f(T)g_j(T)u\right)=f(T)\lim_{j\rightarrow \infty}\left(g_j(T)u\right)=f(T)g(T)u
  \end{equation*}
  for each $u\in \mathcal{H}$.
\end{proof}

Next we show the convergence lemma for the $H^{\infty}(\Sigma)$ functional calculus.
\begin{prop}\label{convergent lemma3}
Let $\{f_n\}_{n=1}^{\infty}\subset H^{\infty}(\Sigma)$ be uniformly bounded sequence. Assume $f\in H^{\infty}(\Sigma)$ and $f_n\rightarrow f$ uniformly on compact subsets of $\Sigma$. Then $f_n(T)u\rightarrow f(T)u$ for any $u\in \mathcal{H}$.
\end{prop}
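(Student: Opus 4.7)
The plan is an $\epsilon/3$ density argument: obtain a uniform operator-norm bound on $\{f_n(T)\}$, verify strong convergence on the dense subspace $\mathbf D(T)\subset\mathcal H$, and then extend by density.

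First, I would pass the estimate of Theorem \ref{functional calculus} from $\Theta(\Sigma)$ to $H^\infty(\Sigma)$. Given $g\in H^\infty(\Sigma)$, the mollified symbols $\phi_m(z):=\frac{im}{im+z}g(z)$ lie in $\Theta(\Sigma)$, converge to $g$ uniformly on compact subsets of $\Sigma$, and are bounded in $H^\infty(\Sigma)$ by $C\|g\|_\infty$ uniformly in $m$ (since $\mathrm{dist}(-im,\Sigma)$ grows like $m$, so $\sup_{z\in\Sigma}|im/(im+z)|$ stays bounded). Hence Definition \ref{definition of f(T)} combined with Theorem \ref{functional calculus} yields $\|g(T)\|\le C\|g\|_\infty$. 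Setting $M:=\sup_n\|f_n\|_\infty<\infty$, this gives $\sup_n\|f_n(T)\|\le CM$ and $\|f(T)\|\le CM$.

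Next I would verify the convergence on $\mathbf D(T)$. Fix $\tau_1>\tau$; by Proposition \ref{Gen resolvent of DB}, $i\tau_1\in\rho(T)$. Any $u\in\mathbf D(T)$ can be written $u=(i\tau_1-T)^{-1}v$ for a unique $v\in\mathcal H$. Define $\psi_n(z):=f_n(z)/(i\tau_1-z)$ and $\psi(z):=f(z)/(i\tau_1-z)$; both lie in $\Theta(\Sigma)$, satisfy the uniform decay $|\psi_n(z)|\le CM/|z|$, and $\psi_n\to\psi$ uniformly on compact subsets of $\Sigma$. Exactly as in the proof of Proposition \ref{convergent lemma2}, the $\Theta(\Sigma)$-product rule (Proposition \ref{fg equal fg}) together with the standard contour-integral identification $(1/(i\tau_1-z))(T)=(i\tau_1-T)^{-1}$ in the $\Theta(\Sigma)$ calculus gives the factorisations $f_n(T)u=\psi_n(T)v$ and $f(T)u=\psi(T)v$. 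Then Proposition \ref{convergent lemma1} applied to $\psi_n\to\psi$ with the uniform $1/|z|$-decay yields $\psi_n(T)\to\psi(T)$ in operator norm; in particular $f_n(T)u\to f(T)u$ for every $u\in\mathbf D(T)$.

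Finally, since $\mathbf D(T)$ is dense in $\mathcal H$ by Proposition \ref{closed and densely} and $\sup_n\|f_n(T)\|+\|f(T)\|\le 2CM$, the three-term estimate
\[
\|f_n(T)u-f(T)u\|\le 2CM\,\|u-u_k\|+\|f_n(T)u_k-f(T)u_k\|
\]
with $u_k\in\mathbf D(T)$ approximating $u\in\mathcal H$ concludes the proof. The main technical nuisance is the extension of the operator bound to $H^\infty(\Sigma)$ (which parallels the discussion right after Definition \ref{definition of f(T)}); after that, every step is a direct reuse of the convergence machinery already established for the $\Theta(\Sigma)$ calculus.
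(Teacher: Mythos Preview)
Your argument is correct, but it differs from the paper's proof. The paper avoids the density step entirely by a diagonal trick: for each $n$ it picks $m_n$ large enough that $\|\tfrac{im_n}{im_n-T}f_n(T)u-f_n(T)u\|\to 0$ (using the last part of Proposition~\ref{convergent lemma2}), observes that $\tfrac{im_n}{im_n-z}f_n(z)\in\Theta(\Sigma)$ is uniformly bounded and converges to $f$ on compacts, and then reads off $\tfrac{im_n}{im_n-T}f_n(T)u\to f(T)u$ directly from Definition~\ref{definition of f(T)}. A triangle inequality finishes. This sidesteps both the uniform operator bound for $f_n(T)$ and the passage through $\mathbf D(T)$. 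Your $\epsilon/3$ route is more classical and arguably more transparent; the paper's route is shorter and relies only on the $\Theta(\Sigma)$ machinery already in place.

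One small citation slip: in your step~2 you invoke Proposition~\ref{fg equal fg} for the factorisation $f_n(T)u=\psi_n(T)v$, but that proposition requires both factors to lie in $\Theta(\Sigma)$, whereas $f_n$ is only in $H^\infty(\Sigma)$. You should instead cite the $H^\infty$ product rule, Proposition~\ref{fg equal fg H class}, which gives $\psi_n(T)=f_n(T)\,(i\tau_1-T)^{-1}$ immediately. With that correction the argument is complete.
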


\begin{proof}
  Fix $u\in \mathcal{H}$. By Proposition \ref{convergent lemma2}, there exist a sequence $\{m_n\}_{n=1}^{\infty}\subset \mathrm{N}$ such that $m_n>n$ and
  \begin{equation}\label{convergent lemma31}
    \left\|\frac{im_n}{im_n-T}f_n(T)u-f_n(T)u\right\|\rightarrow 0
  \end{equation}
  as $n\rightarrow \infty$. On the other hand, the sequence $\left\{\frac{im_n}{im_n-z}f_n(z)\right\}_{n=1}^{\infty}\subset\Theta(\Sigma)$ is uniformly bounded and converges to $f$ on compact subsets of $\Sigma$. Therefore
  \begin{equation}\label{convergent lemma32}
    \left\|\frac{im_n}{im_n-T}f_n(T)u-f(T)u\right\|\rightarrow 0
  \end{equation}
  as $n\rightarrow \infty$. The triangle inequality and \eqref{convergent lemma31}, \eqref{convergent lemma32} imply that
  \begin{equation*}
    \|f_n(T)u-f(T)u\|\rightarrow 0.
  \end{equation*}
\end{proof}

\subsection{Important examples of the functional calculus} We conclude this section by considering several important examples.

Let us define the following functions on $\Sigma$
\begin{equation*}
  \pi_{\pm}(z)=
  \begin{cases}
    1, & \mbox{if } z\in \Sigma^{\pm} \\
    0, & \mbox{if } z\in\Sigma\setminus \Sigma^{\pm}
  \end{cases},
  \quad
  \pi_{0}(z)=
  \begin{cases}
    1, & \mbox{if } z\in \Sigma^{0} \\
    0, & \mbox{if } z\in\Sigma\setminus \Sigma^{0}
  \end{cases}
\end{equation*}
and the corresponding operators $\Pi_{\pm}:=\pi_{\pm}(T)$, $\Pi_{0}:=\pi_{0}(T)$.

\begin{prop}\label{projections}
  The operators $\Pi_{\pm}$ and $\Pi_0$ are bounded complementary projections.
\end{prop}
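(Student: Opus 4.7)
The plan is to read off all three required properties directly from the algebraic and convergence properties of the $H^\infty(\Sigma)$ functional calculus established in Propositions \ref{fg equal fg H class}, \ref{convergent lemma2}, and \ref{convergent lemma3}. First I would observe that, because $\Sigma = \Sigma^-\cup\Sigma^+\cup\Sigma^0$ is a disjoint union of open (connected) pieces, each of the functions $\pi_\pm$ and $\pi_0$ is locally constant on $\Sigma$, hence holomorphic, and obviously bounded by $1$. Thus they all lie in $H^\infty(\Sigma)$, and Definition \ref{definition of f(T)} together with Theorem \ref{functional calculus} immediately produces the bounded operators $\Pi_\pm,\Pi_0$ on $\mathcal{H}$, with operator norms controlled by $1$.

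Next I would verify idempotency and pairwise orthogonality. Since $\pi_\pm^2=\pi_\pm$, $\pi_0^2=\pi_0$, and $\pi_+\pi_-=\pi_+\pi_0=\pi_-\pi_0=0$ as elements of $H^\infty(\Sigma)$, Proposition \ref{fg equal fg H class} yields
\begin{equation*}
  \Pi_\pm^2=\Pi_\pm,\qquad \Pi_0^2=\Pi_0,\qquad \Pi_+\Pi_-=\Pi_+\Pi_0=\Pi_-\Pi_0=0,
\end{equation*}
so each of the three operators is a bounded projection, and their ranges are mutually annihilated by the others.

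The remaining point, that they are complementary in the sense $\Pi_++\Pi_-+\Pi_0=I$, follows from $\pi_++\pi_-+\pi_0=1$ on $\Sigma$ together with additivity from Proposition \ref{fg equal fg H class}, once we identify the operator $\mathbf{1}(T)$ associated with the constant function $1\in H^\infty(\Sigma)$ with the identity on $\mathcal{H}$. For this I would approximate $1$ by the sequence $f_m(z):=im/(im+z)\in\Theta(\Sigma)$, which is uniformly bounded and converges to $1$ uniformly on compact subsets of $\Sigma$; the second part of Proposition \ref{convergent lemma2} (and Remark \ref{remark convergent lemma2}) then gives $f_m(T)u\to u$ for every $u\in\mathcal{H}$, hence $\mathbf{1}(T)=I$. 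Combined with the previous paragraph this completes the proof.

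The main obstacle here is a conceptual rather than a technical one: one must not try to define $\mathbf{1}(T)$ through the contour integral in Definition \ref{definition of psi(T)}, since $1\notin\Theta(\Sigma)$, and must instead invoke the approximation mechanism underlying Definition \ref{definition of f(T)}. Everything else is a direct application of the homomorphism properties already established.
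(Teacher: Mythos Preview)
Your proposal is correct and follows essentially the same route as the paper: idempotency and mutual annihilation come from the homomorphism property (Proposition \ref{fg equal fg H class}) applied to the pointwise identities $\pi_\pm^2=\pi_\pm$, $\pi_0^2=\pi_0$, $\pi_\pm\pi_\mp=\pi_\pm\pi_0=0$, and the identity $\Pi_++\Pi_-+\Pi_0=I$ comes from $\pi_++\pi_-+\pi_0\equiv 1$ together with Proposition \ref{convergent lemma2} (via an approximating sequence like $im/(im+z)$). Your write-up is slightly more explicit about why the $\pi$'s lie in $H^\infty(\Sigma)$ and about identifying $\mathbf{1}(T)$ with $I$, but the argument is the same.
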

\begin{proof}
  By Proposition \ref{fg equal fg H class}, we see that
  \begin{equation*}
    \Pi_{\pm}\Pi_{\pm}u=\pi_{\pm}(T)\pi_{\pm}(T)u=(\pi_{\pm}\pi_{\pm})(T)u=\pi_{\pm}(T)u=\Pi_{\pm}u
  \end{equation*}
  for any $u\in \mathcal{H}$. Similarly, we obtain
  \begin{equation*}
    \Pi_{0}\Pi_{0}u=\Pi_{0}u,
    \quad
    \Pi_0\Pi_{\pm}u=0,
    \quad
    \Pi_{\pm}\Pi_{\mp}u=0.
  \end{equation*}
  Since $(\pi_{-}+\pi_0+\pi_{+})(z)=1$ for $z\in \Sigma$, Propositions \ref{convergent lemma2} and \ref{fg equal fg H class} give
  $$
  \Pi_{-}u+\Pi_0u+\Pi_{+}u=u
  $$
  for any $u\in \mathcal{H}$.
\end{proof}

According to the above proposition, we have a topological splitting
\begin{equation*}
  \mathcal{H}=\mathbf{R}(\Pi_-)\oplus \mathbf{R}(\Pi_0)\oplus  \mathbf{R}(\Pi_+).
\end{equation*}

For given $u\in\mathbf{R}(\Pi_0)\oplus \mathbf{R}(\Pi_{\pm})$, we define
\begin{equation*}
  u_t:=\left(e^{-tT}\right)u
\end{equation*}
for $\pm t>0$, where $e^{-tT}$ is the operator obtained from the function
\begin{equation*}
  h_{t}(z)=
  \begin{cases}
    e^{-tz}, & \mbox{if } z\in\Sigma^0\cup \Sigma^{\pm}, \\
    0, & \mbox{if } z\in\Sigma\setminus \left(\Sigma^0\cup\Sigma^{\pm}\right),
  \end{cases}
\end{equation*}
by the functional calculus.

\begin{prop}\label{ode}
  Let $u\in \mathbf{R}(\Pi_0)\oplus\mathbf{R}(\Pi_{\pm})$. Then in $\mathcal{H}$ we have
  \begin{equation}\label{ode1}
    \partial_tu_t+Tu_t=0
  \end{equation}
  for $\pm t>0$ and $u_t\rightarrow u$ as $t\rightarrow 0$.
\end{prop}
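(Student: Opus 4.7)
The plan is to exploit that the function $h_t$ from the proposition lies in $H^\infty(\Sigma)$ for each fixed $t$ with $\pm t>0$, and to derive both the initial condition and the ODE from two applications of the convergence Proposition~\ref{convergent lemma3}. For concreteness I treat the case $t>0$, $u\in \mathbf{R}(\Pi_0)\oplus \mathbf{R}(\Pi_+)$; the case $t<0$ is symmetric.

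For the initial condition, Proposition~\ref{projections} gives $u=(\pi_0+\pi_+)(T)u$. Since each $\lambda_i^0$ is purely imaginary, on every compact $K\subset\Sigma$ one has $h_t\to \pi_0+\pi_+$ uniformly as $t\downarrow 0$; moreover $|h_t(z)|\le 1$ on $\Sigma^+$ and $|h_t|$ is uniformly bounded on the bounded set $\Sigma^0$ for $t\in(0,1]$. Proposition~\ref{convergent lemma3} then yields $u_t=h_t(T)u\to u$.

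For the ODE, I first identify $Tu_t$. The key observation is that $zh_t\in H^\infty(\Sigma)$: on $\Sigma^+$ the quantity $|zh_t(z)|=|z|e^{-t\mathrm{Re}\,z}$ is bounded because $|z|$ is comparable to $\mathrm{Re}\,z$ inside the sector, and on the bounded piece $\Sigma^0$ the bound is trivial. Choose $\lambda_0\in \rho(T)\setminus\Sigma$. Since $(\lambda_0-z)^{-1}\in \Theta(\Sigma)$, Definition~\ref{definition of psi(T)} writes $((\lambda_0-z)^{-1})(T)$ as $\frac{1}{2\pi i}\int_\gamma (\lambda_0-\zeta)^{-1}(\zeta-T)^{-1}d\zeta$; closing each unbounded branch $\gamma^\pm$ with a large arc on which both $|\lambda_0-\zeta|^{-1}$ and $\|(\zeta-T)^{-1}\|$ decay like $|\zeta|^{-1}$ by Proposition~\ref{Gen resolvent of DB}, and then applying the operator-valued Cauchy formula for $\zeta\mapsto(\zeta-T)^{-1}$ on the resulting bounded contours, one identifies $((\lambda_0-z)^{-1})(T)=(\lambda_0-T)^{-1}$. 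Writing $h_t=(\lambda_0-z)^{-1}\cdot[(\lambda_0-z)h_t]$ and invoking the product rule of Proposition~\ref{fg equal fg H class} gives $u_t=(\lambda_0-T)^{-1}(\lambda_0 u_t-(zh_t)(T)u)$, so that $u_t\in \mathbf{D}(T)$ and $Tu_t=(zh_t)(T)u$.

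For the time derivative, fix $t>0$ and, for $|s|<t/2$, set $\phi_s(z):=s^{-1}(h_{t+s}(z)-h_t(z))\in H^\infty(\Sigma)$. Taylor expansion in the variable $t$ gives $\phi_s\to -zh_t$ uniformly on compact subsets of $\Sigma$, while the pointwise estimate $|e^{-sz}-1|\le |sz|e^{|sz|}$ combined with the sector bound $|z|\lesssim\mathrm{Re}\,z$ on $\Sigma^+$ yields $|\phi_s(z)|\lesssim |z|e^{-t\mathrm{Re}\,z/2}$ there, with an analogous uniform estimate on $\Sigma^0$. Hence $\{\phi_s\}_{|s|<t/2}$ is uniformly bounded on $\Sigma$, and Proposition~\ref{convergent lemma3} gives
\begin{equation*}
\frac{u_{t+s}-u_t}{s}=\phi_s(T)u\longrightarrow -(zh_t)(T)u=-Tu_t,
\end{equation*}
which is precisely $\partial_t u_t+Tu_t=0$. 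The main obstacle is the identification $((\lambda_0-z)^{-1})(T)=(\lambda_0-T)^{-1}$ via contour deformation on the unbounded branches $\gamma^\pm$; once this is settled the remaining work is a straightforward bookkeeping exercise with the convergence lemma.
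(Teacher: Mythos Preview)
Your argument works, but it is more elaborate than the paper's. The point you overlook is that $h_t$ already lies in $\Theta(\Sigma)$, not merely in $H^\infty(\Sigma)$: for $t>0$ and $z\in\Sigma^+$ one has $|h_t(z)|=e^{-t\,\mathrm{Re}\,z}$, which decays exponentially and hence faster than any power $|z|^{-\alpha}$. Consequently $\partial_t h_t(z)=-zh_t(z)\in\Theta(\Sigma)$ as well, and the paper simply notes that the difference quotients $\delta^{-1}(h_{t+\delta}-h_t)\in\Theta(\Sigma)$ converge to $\partial_t h_t$ uniformly on compacta and invokes Proposition~\ref{convergent lemma2} to obtain $\partial_t(h_t(T)u)=(\partial_t h_t)(T)u=-Th_t(T)u$. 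The last equality $(zh_t)(T)=Th_t(T)$ comes straight from the contour-integral Definition~\ref{definition of psi(T)} via $\zeta(\zeta-T)^{-1}=I+T(\zeta-T)^{-1}$ and closedness of $T$, with the scalar integral $\int_\gamma h_t(\zeta)\,d\zeta$ vanishing by Cauchy's theorem (here the exponential decay of $h_t$ is exactly what makes closing $\gamma^+$ at infinity legitimate). Your route through Proposition~\ref{convergent lemma3} and the factorisation $h_t=(\lambda_0-z)^{-1}\cdot(\lambda_0-z)h_t$ is valid; it just does extra work by treating $h_t$ as a generic $H^\infty$ symbol rather than exploiting its decay.

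One caveat on the step you flag as the main obstacle: your arc-closing argument for $((\lambda_0-z)^{-1})(T)=(\lambda_0-T)^{-1}$ is not quite right as written. Closing each branch $\gamma^+$ or $\gamma^-$ individually forces the arc to cross $S_{\omega,\tau}$ (near the positive or negative real axis respectively), where Proposition~\ref{Gen resolvent of DB} gives no resolvent control; since $(\lambda_0-\zeta)^{-1}$ decays only like $|\zeta|^{-1}$ this is fatal, in contrast to the $h_t$ case where exponential decay absorbs everything. The identification is of course true and is used tacitly elsewhere in the paper (for instance inside the proof of Proposition~\ref{convergent lemma2}), but it needs a different justification than the one you sketch.
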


\begin{proof}
  Let us fix $u\in\mathbf{R}(\Pi_0)\oplus \mathbf{R}(\Pi_{\pm})$. Note that $\partial_th_t(z)\in \Theta(\Sigma)$ and
  \begin{equation*}
    \frac{h_{t+\delta}(z)-h_t(z)}{\delta}\rightarrow \partial_th_t(z)
  \end{equation*}
  uniformly on compact subsets of $\Sigma$ as $\delta\rightarrow 0$. Therefore Proposition \ref{convergent lemma2} yields
  \begin{equation*}
    \partial_th_t(T)u=\left(\partial_th_t\right)(T)u=-Th_t(T)u.
  \end{equation*}
  This implies \eqref{ode1}.

  Next, for any compact subset of $\Sigma$, we have the uniform convergence of $h_t\in \Theta(\Sigma)$ to $\pi_0+\pi_+$ as $t\rightarrow 0$. Therefore Proposition \ref{convergent lemma2} gives
  \begin{equation*}
    \lim_{t\rightarrow 0}u_t=\lim_{t\rightarrow 0}h_t(T)u=(\Pi_0+\Pi_+)u=u.
  \end{equation*}

  The statement for $u\in\mathbf{R}(\Pi_{0})\oplus\mathbf{R}(\Pi_{-})$ follows from similar arguments.
\end{proof}


\section{Application to waveguide propagation}\label{Applications}
In this section, we return to the Helmholtz equation and Maxwell's system of equations and use our new functional calculus for the operator $T:=\left.DB\right|_{\mathbf{R}(D)}$ to investigate acoustic and electromagnetic waves along the waveguide. More precisely, in Theorems \ref{app1} and \ref{app2} we prove that all polynomially bounded time-harmonic waves in the semi- or bi-infinite waveguide have
representation in $\mathbf{R}(\Pi_0)$ or $\mathbf{R}(\Pi_0)\oplus\mathbf{R}(\Pi_+)$ respectively.

\subsection{The bi-infinite waveguide}
We start by considering the bi-infinite waveguide, that is we consider the ordinary differential equation
\begin{equation}\label{app11}
  (\partial_t+T)f=0,
  \qquad
  (t,x)\in \mathrm{R}\times\Omega.
\end{equation}

\begin{thm}\label{app1}
  $\mathbf{(A):}$ Let $f_0\in \mathbf{R}(\Pi^0)$ and
  \begin{equation*}
    h_t(z)=\begin{cases}
           e^{-tz}, & \mbox{if } z\in \Sigma^0 \\
           0, & \mbox{if } z\in \Sigma\setminus\Sigma^0.
         \end{cases}
  \end{equation*}
  Then $f_t:=h_t(T)f_0\in C(\mathrm{R};\mathbf{R}(\Pi_0))$ solves equation \eqref{app11} and for any nonnegative integer $j$ there exists a constant $C=C(j)>0$, which is independent of the choice of $f_0$, such that
  \begin{equation}\label{th1 polynomial}
    \|\partial_t^j f_t\|+\|T^jf_t\|<C(1+|t|^l)\|f_0\|
  \end{equation}
  with $l=\sup_{i}m_i-1$, where $m_i$ is the index of $\lambda_i^0$ for $i=1,...,N$.

  $\mathbf{(B):}$ Conversely, let $f_t\in C(\mathrm{R}; \mathcal{H})$ such that $f_t\in \mathbf{D}(T)$ for all $t\in \mathrm{R}$. Assume that $f_t$ solves equation \eqref{app11} and satisfies
  \begin{equation}\label{app14}
    \|f_t\|<C e^{\varepsilon|t|}
  \end{equation}
  for all $t\in\mathrm{R}$ and some $t$-independent constants $C>0$ and $\varepsilon\in (0,a)$. Then $f_0\in \mathbf{R}(\Pi^0)$ and for any $t\in \mathrm{R}$,
  \begin{equation*}
    f_t=h_t(T)f_0.
  \end{equation*}
\end{thm}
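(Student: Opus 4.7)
Since $h_t$ vanishes on the unbounded components $\Sigma^\pm$ and equals $e^{-tz}$ on the bounded component $\Sigma^0$, it lies in $H^\infty(\Sigma)$, so $f_t := h_t(T)f_0$ is defined by the $H^\infty$ functional calculus; because $h_t\pi_0 = h_t$ we have $f_t \in \mathbf{R}(\Pi_0)$. Continuity and the ODE \eqref{app11} follow from Proposition \ref{convergent lemma3} applied to $(h_{t+\delta}-h_t)/\delta \to -zh_t$, which converge uniformly on compact subsets of $\Sigma$ and are uniformly bounded in $H^\infty(\Sigma)$ because they are supported in the bounded set $\Sigma^0$. For the polynomial bound \eqref{th1 polynomial} I would decompose $h_t = \sum_{i=1}^N h_t^{(i)}$ into the localizations of $e^{-tz}$ around each $\lambda_i^0$ and apply Proposition \ref{algebraic multiplicity}: since $(h_t^{(i)})^{(j)}(\lambda_i^0)=(-t)^j e^{-t\lambda_i^0}$ and $\lambda_i^0$ is purely imaginary, $|e^{-t\lambda_i^0}|=1$, so each term of the max is $|t|^j \leq 1+|t|^{m_i-1}$, giving $\|h_t^{(i)}(T)\| \leq C(1+|t|^{m_i-1})$ and then $\|h_t(T)\| \leq C(1+|t|^l)$. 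For $T^j f_t$ and $\partial_t^j f_t$, commute $T$ with $h_t(T)$ and use $\partial_t h_t = -zh_t$ to express both as $(\pm z^j h_t)(T)f_0$, then reapply Proposition \ref{algebraic multiplicity} to $z^j h_t$, whose derivatives at $\lambda_i^0$ are polynomials in $t$ of degree at most $m_i-1$ times a factor of modulus one.

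\textbf{Plan for Part (B).} Using the projections from Proposition \ref{projections}, decompose $f_t = \Pi_- f_t + \Pi_0 f_t + \Pi_+ f_t$. Each component lies in the corresponding invariant subspace (the $\Pi_\bullet$ commute with $T$ by the functional calculus), separately solves \eqref{app11}, and inherits the growth bound \eqref{app14}. The aim is to prove $\Pi_\pm f_t \equiv 0$, placing $f_0 \in \mathbf{R}(\Pi_0)$ and reducing the problem to a matrix ODE.

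To kill $\Pi_+ f_t$ I would introduce the symbols $h_s^+(z) := e^{-sz}\pi_+(z)$ for $s \geq 0$. On $\Sigma^+$ we have $\text{Re}\,z > a$, so $\|h_s^+\|_\infty \leq e^{-sa}$ and Theorem \ref{functional calculus} yields $\|h_s^+(T)\| \leq Ce^{-sa}$. Applying Proposition \ref{convergent lemma3} to the uniformly $H^\infty$-bounded and uniformly-on-compacts convergent quotients $(h_{s+\delta}^+ - h_s^+)/\delta \to -zh_s^+$ shows that $s \mapsto h_s^+(T)v$ is strongly differentiable with derivative $-Th_s^+(T)v$. For any $t_0 \leq t$, the curve $u_s := h_{t-s}^+(T)\Pi_+ f_s$ on $[t_0,t]$ then satisfies
\begin{equation*}
  \partial_s u_s = Th_{t-s}^+(T)\Pi_+ f_s - h_{t-s}^+(T)T\Pi_+ f_s = 0
\end{equation*}
by commutation within the functional calculus, so $u_s$ is constant; comparing endpoints yields the semigroup identity
\begin{equation*}
  \Pi_+ f_t = h_{t-t_0}^+(T)\Pi_+ f_{t_0} \qquad (t \geq t_0).
\end{equation*}
Combining with \eqref{app14} gives $\|\Pi_+ f_t\| \leq C^2 e^{-(t-t_0)a + \varepsilon|t_0|}$, whose exponent equals $-ta + t_0(a-\varepsilon)$ and tends to $-\infty$ as $t_0 \to -\infty$ because $a > \varepsilon$; hence $\Pi_+ f_t = 0$ for every $t \in \mathrm{R}$. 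A symmetric argument with $h_s^-(z) := e^{-sz}\pi_-(z)$ (bounded by $e^{-|s|a}$ for $s \leq 0$) applied on intervals $[t,t_1]$ and then letting $t_1 \to +\infty$ forces $\Pi_- f_t = 0$.

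\textbf{Conclusion and main obstacle.} With $\Pi_\pm f_t \equiv 0$, we have $f_t = \Pi_0 f_t \in \mathbf{R}(\Pi_0)$ for every $t$, and in particular $f_0 \in \mathbf{R}(\Pi_0)$. Since $\Pi_0$ is compact and idempotent, $\mathbf{R}(\Pi_0)$ is finite-dimensional and $T|_{\mathbf{R}(\Pi_0)}$ is a bounded matrix, so the Cauchy problem for \eqref{app11} on this invariant subspace has the unique solution $t \mapsto e^{-tT|_{\mathbf{R}(\Pi_0)}}f_0$, which coincides with $h_t(T)f_0$ because the $H^\infty$ calculus collapses to the Riesz--Dunford/matrix calculus on a finite-dimensional invariant subspace. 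The main technical hurdle is the strong differentiability of $s \mapsto h_{t-s}^+(T)v$ together with the commutation $Th_{t-s}^+(T) = h_{t-s}^+(T)T$ on $\mathbf{D}(T)$, since these legitimize $\partial_s u_s = 0$ and hence the semigroup identity; the uniform $H^\infty$-bound on the difference quotients required by Proposition \ref{convergent lemma3} is exactly where the strict spectral gap $\text{Re}\,z > a$ on $\Sigma^+$ enters, and the quantitative gap $\varepsilon < a$ is what drives the limits $t_0 \to -\infty$ and $t_1 \to +\infty$ to completion.
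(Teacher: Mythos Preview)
Your proposal is correct and follows essentially the same route as the paper: in Part (A) both use that $h_t\in\Theta(\Sigma)$ together with Proposition~\ref{algebraic multiplicity} for the polynomial bound, and in Part (B) both propagate $\Pi_+ f_t$ backward via $s\mapsto e^{-(t-s)T}\Pi_+ f_s$ and use the decay $\|e^{-sT}\Pi_+\|\lesssim e^{-sa}$ against the growth $e^{\varepsilon|t_0|}$ as $t_0\to -\infty$ (and symmetrically for $\Pi_-$). The only minor deviation is the final uniqueness step: the paper repeats the Duhamel trick with $h_{s-t}(T)(f_t-\widetilde f_t)$, whereas you invoke finite-dimensionality of $\mathbf{R}(\Pi_0)$ and matrix ODE uniqueness directly---both are valid and equally short.
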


\begin{proof}
  $\mathbf{(A):}$ Note that $h_t(z)\in \Theta(\Sigma)$ for any $t\in \mathrm{R}$. Therefore Theorems \ref{convergent lemma1} and \ref{functional calculus} imply
  \begin{equation*}
    \|h_t(T)-h_{t+\delta}(T)\|\leq C\|e^{-tz}-e^{-(t+\delta)z}\|_{L^{\infty}(\Sigma^0)}\rightarrow 0
  \end{equation*}
  as $\delta\rightarrow 0$, so that $f_t\in C(\mathbf{R};\mathcal{H})$. By Proposition \ref{ode}, $f_t$ solves the equation \eqref{app11}.
  The boundedness of $\left.T\right|_{\mathbf{R}(\Pi_0)}$ and Proposition \ref{algebraic multiplicity} imply
  \begin{equation*}
    \|\partial_t^j f_t\|+\|T^jf_t\|\leq C\sum_{i=1}^{N}\max_{0\leq k\leq m_i-1}|h^{(k)}(\lambda^0_i)|\|f_0\|,
  \end{equation*}
  and thereby derive \eqref{th1 polynomial}.

  $\mathbf{(B):}$ Let us set
  \begin{equation*}
    g^+_t(z)=\begin{cases}
             e^{-tz}, & \mbox{if } z\in\Sigma^+, \\
             0, & \mbox{if } z\in\Sigma\setminus\Sigma^+
           \end{cases}
  \end{equation*}
  for $t>0$. By assumption, $f_t$ solves \eqref{app11}. Therefore, for $t_0\in \mathrm{R}$ and $t<t_0$, we obtain
  \begin{equation*}
    \partial_t\left(g^+_{t_0-t}(T)\Pi_+f_t\right)=g^+_{t_0-t}(T)\left(\partial_t+T\right)\Pi_+f_t=
    g^+_{t_0-t}(T)\Pi_+\left(\partial_t+T\right)f_t=0.
  \end{equation*}
  Integrating over $(P,t_0)$ for some $P<t_0$, gives
  \begin{equation*}
    \Pi_+f_{t_0}-g^+_{t_0-P}(T)\Pi^+f_P=0.
  \end{equation*}
  By Theorem \ref{functional calculus} and estimate \eqref{app14}, we obtain
  \begin{equation*}
  \|g^+_{t_0-P}(T)\Pi_+f_P\|\leq C\sup_{z\in \Sigma^+}\left|e^{-(t_0-P)z}\right|e^{\varepsilon|P|}\leq Ce^{-(t_0-P)a}e^{\varepsilon|P|}.
  \end{equation*}
  Letting $P\rightarrow-\infty$, we conclude that $\Pi_+f_{t_0}=0$ for $t_0\in \mathrm{R}$.

  Similarly, let
  \begin{equation*}
    g^-_t(z)=\begin{cases}
             e^{tz}, & \mbox{if } z\in\Sigma^-, \\
             0, & \mbox{if } z\in\Sigma\setminus\Sigma^-
           \end{cases}
  \end{equation*}
  for $t>0$. Then, for $t_0\in \mathrm{R}$ and $t>t_0$, we derive
  \begin{equation*}
    \partial_t\left(g^-_{t-t_0}(T)\Pi_-f_t\right)=0.
  \end{equation*}
  By integrating over $(t_0,P)$ and letting $P\rightarrow+\infty$, we conclude $\Pi_-f_{t_0}=0$ for $t_0\in \mathrm{R}$, and hence $f_0\in \mathbf{R}(\Pi^0)$. Then the first part of this theorem implies that $\widetilde{f_t}=h_t(T)f_0$ solves equation \eqref{app11}, and hence
  \begin{equation*}
    \partial_t(h_{s-t}(T)(f_t-\widetilde{f_t}))=0
  \end{equation*}
  for $t<s$. By integrating over $(P,s)$ and letting $P\rightarrow0$, one can prove $f_s=\widetilde{f_s}$, so that $f_t=h_t(T)f_0$.
\end{proof}

\subsection{The semi-infinite waveguide}
Next to obtain a similar result for the semi-infinite waveguide we consider the ordinary differential equation
\begin{equation}\label{app21}
  (\partial_t+T)f=0,
  \qquad
  (t,x)\in \mathrm{R}^+\times\Omega,
\end{equation}
where $\mathrm{R}^+:=(0,+\infty)$.

\begin{thm}\label{app2}
  $\mathbf{(A):}$ Let $f_0\in \mathbf{R}(\Pi_0)\oplus \mathbf{R}(\Pi_+)$ and
  \begin{equation*}
    h_t(z)=\begin{cases}
           e^{-tz}, & \mbox{if } z\in \Sigma^0\cup\Sigma^+, \\
           0, & \mbox{if } z\in \Sigma^-
         \end{cases}
  \end{equation*}
  for $t>0$. Then $f_t:=h_t(T)f_0\in C(\mathrm{R}^+; \mathbf{R}(\Pi_0)\oplus \mathbf{R}(\Pi_+))$ solves equation \eqref{app21} and for any nonnegative integer $j$ there exists a constant $C=C(j)>0$, which is independent of the choice of $f_0$, such that
  \begin{equation}\label{app22}
    \|\partial_t^j f_t\|+\|T^jf_t\|<C(t^l+t^{-j})\|f_0\|
  \end{equation}
  with $l=\sup_{i}m_i-1$, where $m_i$ is the index of $\lambda_i^0$ for $i=1,...,N$. Moreover, $\lim_{t\rightarrow0}f_t=f_0$ in $\mathcal{H}$.

  $\mathbf{(B):}$ Conversely, let $f_t\in C(\mathrm{R}^+; \mathcal{H})$ such that $f_t\in \mathbf{D}(T)$ for all $t\in \mathrm{R}^+$. Assume that $f_t$ solves \eqref{app21} and satisfies
  \begin{equation}\label{app24}
    \|f_t\|<C e^{\varepsilon|t|}
  \end{equation}
  for all $t\in\mathrm{R}^+$ and some $t$-independent constants $C>0$ and $\varepsilon\in(0,a)$. Then, there exists $f_0\in \mathbf{R}(\Pi_0)\oplus\mathbf{R}(\Pi_+)$ such that
  \begin{equation*}
    f_t=h_t(T)f_0
  \end{equation*}
  for $t\in \mathrm{R}^+$. Moreover, $f_0\in \mathbf{R}(\Pi_+)$ if and only if $\|f_t\|\rightarrow 0$ as $t\rightarrow\infty$.
  \end{thm}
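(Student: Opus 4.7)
For part (A), my plan is to begin by noting that $h_t\in\Theta(\Sigma)$ for each $t>0$, since $e^{-tz}$ decays faster than any polynomial on $\Sigma^\pm$ while $\Sigma^0$ is compact. Continuity of $f_t$ in $\mathcal{H}$ would then follow from uniform convergence $h_{t+\delta}\to h_t$ on $\Sigma$ combined with Theorem \ref{functional calculus}, and the ODE \eqref{app21} is directly Proposition \ref{ode}. For the estimate \eqref{app22}, I would split $f_0$ using $\pi_0+\pi_+$: on $\mathbf{R}(\Pi_0)$ the operator $T$ is bounded on a finite dimensional space, and Proposition \ref{algebraic multiplicity} converts the bounds $|h_t^{(k)}(\lambda_i^0)|=O(1+t^l)$ for $k\le m_i-1\le l$ into $C(1+t^l)\|f_0\|$; on $\mathbf{R}(\Pi_+)$ I would use $\sup_{z\in\Sigma^+}|z^j e^{-tz}|\le Ct^{-j}$ together with Theorem \ref{functional calculus} to obtain the $t^{-j}\|f_0\|$ contribution, and the identity $\partial_t f_t=-Tf_t$ shows that the same bound controls both $\partial_t^j f_t$ and $T^j f_t$. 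The initial limit $f_t\to f_0$ would follow from Proposition \ref{convergent lemma2}, since $h_t\to\pi_0+\pi_+$ uniformly on compact subsets of $\Sigma$ with $\|h_t\|_\infty\le 1$.

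For part (B), the first step would be to show $\Pi_-f_t\equiv 0$ by adapting the argument in Theorem \ref{app1}(B): differentiating $s\mapsto g^-_{s-t_0}(T)\Pi_-f_s$ in $s$ yields zero, so this quantity is constant on $(t_0,\infty)$ with norm bounded by $Ce^{-(s-t_0)a}e^{\varepsilon s}$, which tends to zero as $s\to\infty$ thanks to $\varepsilon<a$. To construct $f_0$, a parallel computation of $\partial_r\bigl(g^+_{t-r}(T)\Pi_+f_r\bigr)=0$ would give the propagation identity $\Pi_+f_t=g^+_{t-s}(T)\Pi_+f_s$ for $0<s<t$. The exponential hypothesis yields $\|\Pi_+f_s\|\le C$ on $(0,1]$, so Banach--Alaoglu supplies $\Pi_+f_{s_n}\rightharpoonup w$ along some $s_n\to 0^+$. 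Splitting $\Sigma^+$ into a compact piece and its exponentially decaying tail shows $\|g^+_{t-s_n}-g^+_t\|_{L^\infty(\Sigma)}\to 0$, so Theorem \ref{functional calculus} upgrades this to operator norm convergence $g^+_{t-s_n}(T)\to g^+_t(T)$; combined with weak convergence of the vectors, passage to the limit in the propagation identity produces $\Pi_+f_t=g^+_t(T)w$ for every $t>0$. Now Proposition \ref{convergent lemma2} applied to $g^+_t\to\pi_+$ on compacts of $\Sigma$ (with $\|g^+_t\|_\infty\le 1$) gives $g^+_t(T)w\to w$ strongly, hence $\Pi_+f_t\to w$ in norm. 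Setting $f_0^0:=\lim_{t\to 0^+}\Pi_0f_t$, which exists since $\Pi_0f_t$ is real-analytic in $t$ on a finite dimensional space, and taking $f_0:=f_0^0+w$, summing the two components gives $f_t=h_t(T)f_0$.

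For the equivalence, if $f_0\in\mathbf{R}(\Pi_+)$ then Theorem \ref{functional calculus} gives $\|f_t\|\le Ce^{-ta}\|f_0\|\to 0$; conversely $\|f_t\|\to 0$ forces $\|\Pi_0f_t\|\to 0$, and since $\Pi_0f_t$ is a sum of terms $e^{-t\lambda_i^0}p_i(t)$ with $p_i$ polynomial-valued and the $\lambda_i^0$ distinct purely imaginary, such an expression can only tend to zero if every $p_i$ vanishes, forcing $f_0^0=0$. I expect the main obstacle to be the construction of $f_0^+$: because the symbol $h_t$ does not converge in $L^\infty(\Sigma)$ norm as $t\to 0$, one cannot pass to the initial value in the propagation identity by operator norm arguments alone, and the argument must simultaneously exploit operator norm continuity of $t\mapsto g^+_t(T)$ on $(0,\infty)$, strong continuity at $t=0$ from Proposition \ref{convergent lemma2}, and the weak compactness of $\{\Pi_+f_s\}$ furnished by the exponential hypothesis.
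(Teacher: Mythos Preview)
Your argument for part (A) matches the paper's essentially line for line: the paper also invokes Proposition~\ref{ode} for the ODE, splits into the $\Pi_0$ and $\Pi_+$ components, uses (via Theorem~\ref{app1}) exactly the Proposition~\ref{algebraic multiplicity} bound on $\mathbf{R}(\Pi_0)$ and the sup-norm bound $\sup_{z\in\Sigma^+}|(1+z^j)e^{-tz}|$ on $\mathbf{R}(\Pi_+)$, and obtains the initial limit from Proposition~\ref{convergent lemma2}.

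For part (B) your approach is correct but organized differently from the paper's. The paper first establishes the \emph{full} propagation identity $f_t=h_{t-s}(T)f_s$ for $0<s<t$ (by differentiating $h_{r-t}(T)(f_{s+t}-h_t(T)f_s)$), then extracts a single weak limit $f_{s_k}\rightharpoonup f_0$ and passes to the limit via duality: $(f_t,\phi)=(f_{s_k},h_{t-s_k}(T)^*\phi)$, using that $h_{t-s_k}(T)^*\phi\to h_t(T)^*\phi$ strongly. You instead decompose $f_t=\Pi_0f_t+\Pi_+f_t$ from the outset, dispose of the $\Pi_0$ piece by finite-dimensional ODE theory, and for the $\Pi_+$ piece observe directly that $\|g^+_{t-s_n}-g^+_t\|_{L^\infty(\Sigma)}\to 0$ (valid, since $|z|e^{-(t/2)\re z}$ is bounded on $\Sigma^+$), so Theorem~\ref{functional calculus} yields \emph{operator norm} convergence $g^+_{t-s_n}(T)\to g^+_t(T)$, which combines cleanly with the weak limit $\Pi_+f_{s_n}\rightharpoonup w$. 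Your route avoids the adjoint trick and makes the mechanism of convergence more transparent; the paper's route is more uniform in that it never separates the two spectral components. Both rest on the same three ingredients you correctly identify at the end: operator norm continuity of $t\mapsto h_t(T)$ on $(0,\infty)$, strong (but not norm) continuity at $t=0$, and weak compactness of the bounded family $\{f_s\}$. The final equivalence argument is also the same in spirit; the paper makes it explicit by projecting onto each $\mathbf{R}(\Pi_i)$ and comparing the highest-degree term against the rest, which is precisely the content of your claim that a sum $\sum_i e^{-t\lambda_i^0}p_i(t)$ with distinct imaginary $\lambda_i^0$ can vanish at infinity only if every $p_i\equiv 0$.
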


\begin{proof}
  $\mathbf{(A):}$ By Proposition \ref{ode}, $f_t$ solves equation \eqref{app21}. From Theorem \ref{app1}, we see that
  \begin{equation}\label{app25}
    \|\partial_t^jh_t(T)\Pi_0f_0\|+\|T^jh_t(T)\Pi_0f_0\|\leq C(1+|t|^l)\|\Pi_0f_0\|.
  \end{equation}
  Theorem \ref{functional calculus} implies now that
  \begin{equation*}
    \|\partial_t^jh_t(T)\Pi_+f_0\|+\|T^jh_t(T)\Pi_+f_0\|\leq \sup_{z\in \Sigma^+}\left|(1+z^j)e^{-tz}\right|\|\Pi_+f_0\|.
  \end{equation*}
  This gives
  \begin{equation}\label{app2.6.1}
    \|\partial_t^jh_t(T)\Pi_+f_0\|+\|T^jh_t(T)\Pi_+f_0\|\leq C t^{-j}\|\Pi_+f_0\|
  \end{equation}
  as $t\rightarrow 0$, and
  \begin{equation}\label{app2.6.2}
    \|\partial_t^jh_t(T)\Pi_+f_0\|+\|T^jh_t(T)\Pi_+f_0\|\leq C e^{-ta}\|\Pi_+f_0\|
  \end{equation}
  as $t\rightarrow \infty$. Combining \eqref{app25}-\eqref{app2.6.2}, we obtain the estimate \eqref{app22}.

  Since $h_t\rightarrow\pi_0+\pi_+$ uniformly on compact subsets of $\Sigma$, Proposition \ref{convergent lemma2} implies
  \begin{equation*}
    \lim_{t\rightarrow 0}h_t(T)f_0=(\pi_0(T)+\pi_+(T))f_0=(\Pi_0+\Pi_+)f_0=f_0.
  \end{equation*}

  $\mathbf{(B):}$ Let us set
  \begin{equation*}
    g_t(z)=\begin{cases}
             e^{tz}, & \mbox{if } z\in\Sigma^-, \\
             0, & \mbox{if } z\in\Sigma\setminus\Sigma^-
           \end{cases}
  \end{equation*}
  for $t>0$. By our assumption, $f_t$ solves \eqref{app21}. Therefore, for $t>s>0$, we obtain
  \begin{equation*}
    \partial_t\left(g_{t-s}(T)\Pi_-f_t\right)=g_{t-s}(T)(\partial_t+T)\Pi_-f_t=g_{t-s}(T)\Pi_-(\partial_t+T)f_t=0.
  \end{equation*}
  Integrating over $(s,P)$ for some $P>s$, gives
  \begin{equation*}
    g_{P-s}(T)\Pi_-f_P-\Pi_-f_{s}=0.
  \end{equation*}
  Theorem \ref{functional calculus} and estimate \eqref{app24} imply now that
  \begin{equation*}
    \|g_{P-s}(T)\Pi_-f_P\|\leq Ce^{-aP}e^{\varepsilon P}.
  \end{equation*}
  Letting $P\rightarrow\infty$, we get $\Pi_-f_s=0$, so that $f_s\in \mathbf{R}(\Pi_0)\oplus \mathbf{R}(\Pi_+)$ for all $s\in \mathrm{R}^+$.

  Fix $s>0$. The first part of this theorem implies that $f_{s+t}-h_t(T)f_s$ solves \eqref{app21}, and hence
  \begin{equation*}
    \partial_t(h_{r-t}(T)(f_{s+t}-h_t(T)f_s))=0
  \end{equation*}
  for $0<t<r$. Let $\varepsilon\in(0,r)$, then integration over $(P,r-\varepsilon)$ gives
  \begin{equation*}
    h_{r-(r-\varepsilon)}(T)(f_{s+(r-\varepsilon)}-h_{r-\varepsilon}(T)f_s)-h_{r-P}(T)(f_{s+P}-h_P(T)f_s)=0.
  \end{equation*}
  Letting $P$, $\varepsilon\rightarrow0$, we obtain $f_{s+r}-h_r(T)f_s=0$ for $r>0$, or equivalently
  \begin{equation}\label{app27}
    f_t=h_{t-s}(T)f_{s}
  \end{equation}
  for $0<s<t$.

  Since $f_t$ is uniformly bounded as $t\rightarrow 0$, one can find a decreasing sequence $\{s_k\}_{k=1}^{\infty}\subset \mathrm{R}^+$ such that $s_k\rightarrow 0$ and $f_{s_{k}}\rightarrow f_0$ weakly in $\mathcal{H}$. Let $\phi$ be a test function. Then, due to \eqref{app27},
  \begin{equation*}
    \left(f_t, \phi\right)=\left(h_{t-s_k}(T)f_{s_k},\phi\right)=\left(f_{s_k},h_{t-s_k}(T)^*\phi\right)
  \end{equation*}

  \begin{equation*}
    =\left(f_{s_k},h_{t-s_k}(T)^*\phi-h_{t}(T)^*\phi\right)+\left(f_{s_k},h_{t}(T)^*\phi\right)
  \end{equation*}
  for $t>s_k$. Therefore
  \begin{equation*}
    \left|\left(f_t, \phi\right)-\left(f_{s_k},h_{t}(T)^*\phi\right)\right|\leq \|f_{s_k}\|\left\|h_{t-s_k}(T)^*\phi-h_{t}(T)^*\phi\right\|
  \end{equation*}
  and letting $k\rightarrow \infty$, we obtain
  \begin{equation*}
    \left|\left(f_t, \phi\right)-\left(f_{0},h_{t}(T)^*\phi\right)\right|\leq 0.
  \end{equation*}
  Hence $f_t=h_{t}(T)f_0$. Since $h_t\rightarrow \pi_0+\pi_+$ uniformly on compact subsets of $\Sigma$, we conclude $f_t\rightarrow f_0$ strongly in $\mathcal{H}$.

  Finally, if $f_0\in \mathbf{R}(\Pi_+)$, then
  \begin{equation*}
    \|f_t\|\leq  Ce^{-at}\|f_0\|
  \end{equation*}
  for $t>0$. Hence $\|f_t\|\rightarrow 0$ as $t\rightarrow \infty$.

  Conversely, assume $\|f_t\|\rightarrow 0$ as $t\rightarrow \infty$. Then $\|h_t(T)\Pi_0f_0\|\rightarrow 0$ as $t\rightarrow \infty$, and therefore
  \begin{equation*}
    \left\|\sum_{i=1}^{N}\sum_{k=0}^{m_i-1}\frac{(-t)^ke^{-t\lambda_i^0}}{k!}(\lambda_i^0-T)^k\Pi_i\Pi_0f_0\right\|\rightarrow 0
  \end{equation*}
  as $t\rightarrow \infty$. Since $\mathbf{R}(\Pi_0)=\bigoplus_{j=1}^{N}\mathbf{R}(\Pi_j)$ and $\lambda^0_i$ is purely imaginary, we obtain
  \begin{equation}\label{app29}
    \left\|\sum_{k=0}^{m_i-1}\frac{(-t)^k}{k!}(\lambda_i^0-T)^k\Pi_if_0\right\|\rightarrow 0
  \end{equation}
  as $t\rightarrow\infty$ for $i=1,...,N$. Let us define
  \begin{equation*}
  l_i:=\sup\{k=1,...,m_i-1:\; (\lambda_i^0-T)^k\Pi_if_0\neq0\}.
  \end{equation*}
  If $l_i>0$, the identity
  \begin{equation*}
    t^{l_i}=(t-1)(t^{l_i-1}+t^{l_i-2}+...+1)+1
  \end{equation*}
  implies
  \begin{equation*}
    \left\|\sum_{k=0}^{m_i-1}\frac{(-t)^k}{k!}(\lambda_i^0-T)^k\Pi_if_0\right\|\geq
    \frac{1}{2}\left\|\frac{(-t)^{l_i}}{l_i!}(\lambda_i^0-T)^{l_i}\Pi_if_0\right\|
  \end{equation*}
  for sufficiently large $t>0$. By \eqref{app29} the left hand side tends to $0$ as $t\rightarrow\infty$, while the right hand side tends to $\infty$. This contradiction shows that $l_i=0$ for $i=1,...,N$. Hence \eqref{app29} implies $\Pi_if_0=0$ for $i=1,...,N$. Therefore $f_0\in \mathbf{R}(\Pi_+)$.
\end{proof}


\subsection*{Acknowledgment}
The authors are greatly indebted to Grigori Rozenblum (The University of Gothenburg, Chalmers University of Technology), Lashi Bandara (The University of Gothenburg, Chalmers University of Technology) and Julie Rowlett (The University of Gothenburg, Chalmers University of Technology) for helpful comments and suggestions.

\end{document}